\documentclass[10pt,reqno]{amsart}
\usepackage{amssymb,mathrsfs,graphicx}
\usepackage{ifthen}
\usepackage{colortbl}
\definecolor{black}{rgb}{0.0, 0.0, 0.0}
\definecolor{red}{rgb}{1.0, 0.5, 0.5}
\provideboolean{shownotes} 
\setboolean{shownotes}{true} 
%
\newcommand{\margnote}[1]{
\ifthenelse{\boolean{shownotes}}%
{\marginpar{\raggedright\tiny\texttt{#1}}}%
{}%
}
\newcommand{\hole}[1]{
\ifthenelse{\boolean{shownotes}}%
{\begin{center} \fbox{ \rule {.25cm}{0cm} \rule[-.1cm]{0cm}{.4cm}
\parbox{.85\textwidth}{\begin{center} \texttt{#1}\end{center}} \rule
{.25cm}{0cm}}\end{center}} {} }

\topmargin-0.1in \textwidth6.in \textheight8.5in \oddsidemargin0.2in
\evensidemargin0.2in

\title[Coupled isothermal Euler and isentropic Navier-Stokes equations]{Global classical solutions and large-time behavior of the two-phase fluid model}

\author[Choi]{Young-Pil Choi}
\address[Young-Pil Choi]{\newline Fakult\"at f\"ur Mathematik
    \newline Technische Universit\"at M\"unchen, Boltzmannstr\ss e 3, 85748, Garching bei M\"unchen, Germany}
\email{ychoi@ma.tum.de}

\numberwithin{equation}{section}

\newtheorem{theorem}{Theorem}[section]
\newtheorem{lemma}{Lemma}[section]

\newtheorem{proposition}{Proposition}[section]
\newtheorem{remark}{Remark}[section]

\newcommand{\R}{\mathbb R}
\newcommand{\N}{\mathbb N}
\newcommand{\om}{\Omega}
\newcommand{\ls}{\lesssim}

\newcommand{\T}{\mathbb T}

\newcommand{\mb}{\mathcal B}
\newcommand{\mc}{\mathcal C}
\newcommand{\me}{\mathcal E}
\newcommand{\md}{\mathcal D}
\newcommand{\mh}{\mathcal H}
\newcommand{\ml}{\mathcal L}

\newcommand{\bq}{\begin{equation}}
\newcommand{\eq}{\end{equation}}
\newcommand{\e}{\varepsilon}
\newcommand{\lt}{\left}
\newcommand{\rt}{\right}

\newcommand{\pa}{\partial}
\newcommand{\intt}{\int_{\T^3}}
\newcommand{\wt}{\widetilde}
\newcommand{\sigg}{{\sigma_1,\sigma_2}}
\def\charf {\mbox{{\text 1}\kern-.30em {\text l}}}





\begin{document}
\allowdisplaybreaks

\date{\today}

\subjclass[]{}
\keywords{Euler equations, Navier-Stokes equations, existence, large-time behavior, kinetic-fluid equations, two-phase fluid model }

\thanks{\textbf{Acknowledgments.} The author was supported by Engineering and Physical Sciences Research Council (EP/K008404/1). This work is supported by the Alexander von Humboldt Foundation through the Humboldt Research Fellowship for Postdoctoral Researchers.
}

\begin{abstract} We study the global existence of a unique strong solution and its large-time behavior of a two-phase fluid system consisting of the compressible isothermal Euler equations coupled with compressible isentropic Navier-Stokes equations through a drag forcing term. The coupled system can be derived as the hydrodynamic limit of the Vlasov-Fokker-Planck/isentropic Navier-Stokes equations with strong local alignment forces. When the initial data is sufficiently small and regular, we establish the unique existence of the global $H^s$-solutions in a perturbation framework. We also provide the large-time behavior of classical solutions showing the alignment between two fluid velocities exponentially fast as time evolves. For this, we construct a Lyapunov function measuring the fluctuations of momentum and mass from its averaged quantities.
\end{abstract}

\maketitle \centerline{\date}


%
%
%
%
\section{Introduction and main results}\label{sec:intro} In this paper, we are concerned with the global existence of a unique classical solutions and the large-time behavior for the compressible isothermal Euler equations coupled with compressible isentropic Navier-Stokes equations. Let $\rho(x,t)$ and $n(x,t)$ be the densities of fluid equations at a domain $(x,t) \in \om \times \R_+$, and let $u(x,t)$ and $v(x,t)$ be the corresponding bulk velocities of $\rho(x,t)$ and $n(x,t)$, respectively. Here we consider two cases that $\om$ is either the periodic domain $\T^3$ or the whole space $\R^3$. Then our coupled hydrodynamic equations read as follows:
\begin{align}\label{f_eq}
\begin{aligned}
&\pa_t \rho + \nabla_x \cdot (\rho u) = 0, \qquad x \in \om, \quad t > 0,\cr
&\pa_t (\rho u) + \nabla_x \cdot (\rho u \otimes u) + \nabla_x \rho = -\rho(u-v),\cr
&\pa_t n + \nabla_x \cdot (n v) = 0,\cr
&\pa_t (nv) + \nabla_x \cdot (nv \otimes v) + \nabla_x p(n) + Lv = \rho(u-v),
\end{aligned}
\end{align}
subject to the initial data
\bq\label{ini_f_eq}
(\rho(x,t),u(x,t),n(x,t),v(x,t))|_{t = 0} = (\rho_0(x),u_0(x),n_0(x),v_0(x)), \quad x \in \om,
\eq
and the boundary conditions
\[
\rho(x,t) \to \rho^\infty \in \R_+, \quad n(x,t) \to n^\infty \in \R_+, \quad u(x,t) \to u^\infty, \quad v(x,t) \to v^\infty, \quad \mbox{as} \quad |x| \to \infty,
\]
if $\om = \R^3$.
Here the pressure $p$ and the Lam\'e operator $L$ are given by
\begin{align}\label{eq_pl}
\begin{aligned}
& p(n) = n^\gamma \quad \mbox{with} \quad \gamma  > 1,\cr
& Lv = -\mu \Delta_x v - (\mu + \lambda)\nabla_x ( \nabla_x \cdot v ) \quad \mbox{with} \quad \mu > 0 \quad \mbox{and} \quad \lambda + 2\mu > 0.
\end{aligned}
\end{align}
Without loss of generality and for the sake of simplicity, throughout the paper, we assume that $\rho^\infty = n^\infty = 1$ and $u^\infty = v^\infty = 0$.
%
%
%
%
\subsection{Formal derivation of the two-phase fluid model \eqref{f_eq}} In this part, we address the formal derivation of the coupled hydrodynamic equations \eqref{f_eq} from kinetic-fluid equations which is a type of Vlasov-Fokker-Planck/compressible Navier-Stokes equations.

More specifically, let $f(x,\xi,t)$ be the distribution function of particles at the position-velocity $(x,\xi)\in \om \times \R^3$ at time $t \in \R_+$, and $n(x,t)$ and $v(x,t)$ be the isentropic compressible fluid density and velocity, respectively. In this situation, we can consider the following kinetic-fluid equations with local alignment and noise forces for the particles to describe the dynamics of particles immersed in the compressible fluid:
\begin{align}\label{eq_kf}
\begin{aligned}
&\pa_t f + \xi \cdot \nabla_x f + \nabla_\xi \cdot ((v - \xi)f) = - \alpha \nabla_\xi \cdot ((u_f - \xi)f) + \sigma \Delta_\xi f, \qquad (x,\xi) \in \om \times \R^3, \quad t > 0,\cr
&\pa_t n + \nabla_x \cdot(nv) = 0,\cr
&\pa_t (nv) + \nabla_x \cdot (nv \otimes v) + \nabla_x p(n) + Lv = \int_{\R^3} (\xi - v)f\,d\xi,
\end{aligned}
\end{align}
where the pressure law $p$ and the operator $L$ are given in \eqref{eq_pl}, and the averaged local velocity $u_f$ is defined by
\[
u_f(x,t) := \frac{\int_{\R^3} \xi f(x,\xi,t) d\xi}{\int_{\R^3} f(x,\xi,t) d\xi}.
\]

Recently, this type of coupled kientic-fluid equations describing the interactions between particles and fluid have received increasing attention due to a number of their applications in the field of, for example, biotechnology, medicine, and in the study of sedimentation phenomena, compressibility of droplets of the spray, and diesel engines, etc \cite{BBKT, Sar, Will}. 

For the system \eqref{eq_kf} without the local alignment forces, i.e., $\alpha = 0$, the global existence of weak solution in a bounded domain with Dirichlet or reflection boundary conditions is obtained in \cite{MV}. In \cite{BD}, the local-in-time existence of classical solutions for the Vlasov/compressible Euler equations is established. For the Vlasov-Fokker-Planck/compressible Euler equations, the global classical solutions are treated in \cite{DL} when the initial data are a small smooth perturbation of constant equilibrium states and in addition the convergence of solutions toward equilibrium is studied. Without the interactions with the fluid, the system \eqref{eq_kf} reduces to the Vlasov-Fokker-Planck equation with the local alignment forces. For this system, global existence of weak solutions is studied in \cite{KMT} and global classical solutions near Maxwellians converging asymptotically to them are constructed in \cite{Choi3}. 

We now take into account a regime where the local alignment and noise forces are strong, i.e., $\alpha = \sigma = \e^{-1}$, and denote the solutions to the system \eqref{eq_kf} with $\alpha = \sigma = \e^{-1}$ by $(f^\e, n^\e, v^\e)$.

Then in the limiting case $\e \to 0$ we can expect the particle distribution function $f^\e(x,\xi,t)$ converges to 
\[
f(x,\xi,t) = \frac{\rho_{f}(x,t)}{(2\pi)^{3/2}}e^{-\frac{|u_{f}(x,t) - \xi|^2}{2}}, 
\]
since the right-hand side of the equation $\eqref{eq_kf}_1$ converges to zero, i.e.,
\[
-\nabla_\xi \cdot ((u_{f^\e} - \xi) f^\e) + \Delta_\xi f^\e \to 0 \quad \mbox{as} \quad \e \to 0,
\]
where 
\[
\rho_{f}(x,t) = \int_{\R^3} f(x,\xi,t)\,d\xi. 
\]
We notice that the continuity equation $\eqref{f_eq}_1$ can be easily obtained by integrating it with respect to $\xi$ together with assuming the convergences $\rho_{f^\e} \to \rho_f$ and $u_{f^\e} \to u_f$ as $\e \to 0$. 

In order to derive the momentum equations $\eqref{f_eq}_2$ formally, we multiply $\eqref{eq_kf}_2$ by $\xi$ and integrating it with respect to $\xi$ to get
$$\begin{aligned}
\frac{d}{dt}\int_{\R^3} \xi f^\e\,d\xi &= \int_{\R^3} \xi \lt(- \nabla_x \cdot(\xi f^\e) - \nabla_\xi \cdot ((v^\e - \xi)f^\e) \rt)\,d\xi- \frac1\e\int_{\R^3} \xi \lt( \nabla_\xi \cdot ((u_{f^\e} - \xi)f^\e) - \Delta_\xi f^\e\rt) \,d\xi\cr
&= - \nabla_x \cdot \int_{\R^3} \xi \otimes \xi \,f^\e\,d\xi + \int_{\R^3} (v^\e - \xi)f^\e\,d\xi\cr
&=:I_1^\e + I_2^\e,
\end{aligned}$$
where we used
\bq\label{eq_zm}
\int_{\R^3} (u_{f^\e} - \xi) f^\e\,d\xi = 0.
\eq
Here $I_i^\e,i=1,2$ are estimated as follows.
$$\begin{aligned}
I_1^\e &= -\nabla_x \cdot \lt( \int_{\R^3} (\xi - u_{f^\e})\otimes (\xi - u_{f^\e})\,f^\e\,d\xi - \int_{\R^3} u_{f^\e}\otimes u_{f^\e}\,f^\e\,d\xi\rt),\cr
I_2^\e &= \rho_{f^\e}(v^\e - u_{f^\e}),
\end{aligned}$$
due to \eqref{eq_zm}. Then, by assuming the appropriate convergences of solutions, we obtain
$$\begin{aligned}
I_1^\e \to I_1 &= -\nabla_x \cdot \lt(\int_{\R^3} (\xi - u_f)\otimes (\xi - u_f)\,\frac{\rho_f}{(2\pi)^{3/2}}e^{-\frac{|u_f - \xi|^2}{2}}\,d\xi \rt)\cr
&\quad  - \nabla_x \cdot(\rho_f u_f \otimes u_f) \int_{\R^3}\frac{1}{(2\pi)^{3/2}}e^{-\frac{|u_f - \xi|^2}{2}}\,d\xi\cr
&= - \nabla_x \rho_f - \nabla_x \cdot(\rho_f u_f \otimes u_f) \cr
I_2^\e \to I_2 &= \rho_f(v-u_f),
\end{aligned}$$
as $\e \to 0$, where we used
\[
\int_{\R^3} (\xi - u_f)\otimes (\xi - u_f)\,\frac{1}{(2\pi)^{3/2}}e^{-\frac{|u_f - \xi|^2}{2}}\,d\xi = \mathbb{I}_{3} \quad\mbox{and} \quad \int_{\R^3}\frac{1}{(2\pi)^{3/2}}e^{-\frac{|u_f - \xi|^2}{2}}\,d\xi = 1.
\]
Here $\mathbb{I}_{3}$ denotes the $3 \times 3$ identity matrix. By combining the above estimates, we derive the desired momentum equations $\eqref{f_eq}_2$. 

\begin{remark}In \cite{CCK}, the kinetic equation $\eqref{eq_kf}_1$ interacting with the viscous incompressible Navier-Stokes equations are considered, and the rigorous derivation of the two-phase fluid model which consists of the isothermal Euler equations and incompressible Navier-Stokes equations is established. This asymptotic analysis is achieved by the relative entropy argument. For the kinetic equation $\eqref{eq_kf}_1$ with a non-local alignement force instead of the drag forcing term, the similar asymptotic analysis is treated in \cite{KMT2}.
\end{remark}


\subsection{Main results}

For the global existence and uniqueness of classical solutions, by setting $h:= \ln \rho$, we reformulate the system \eqref{f_eq} as follows:
\begin{align}\label{main_eq}
\begin{aligned}
&\pa_t h + \nabla h \cdot u + \nabla \cdot u = 0, \quad x \in \om, \quad t >0,\cr
&\pa_t u + u \cdot \nabla u + \nabla h = -(u-v), \cr
&\pa_t n + \nabla_x \cdot ((n+1)v) = 0,\cr
&\pa_t((n+1)v) + \nabla_x \cdot ((n+1)v \otimes v) + \nabla_x p(n+1) + Lv = e^h(u-v)
\end{aligned}
\end{align}
with initial data 
\bq\label{ini_main_eq}
(h(x,t),u(x,t),n(x,t),v(x,t))|_{t=0} =: (h_0(x) = \ln \rho_0(x),u_0(x),n_0(x),v_0(x)), \quad x \in \om.
\eq
Before stating our main result, we define our solution space:
\begin{align*}
\begin{aligned}
\mathcal{I}_s(T):= \Big\{ (h,u,n,v)\, |&\, h \in \mc([0,T];H^s(\om)) \cap\mc^1([0,T];H^{s-1}(\om)) ,
\cr 
&  u\in \mc([0,T];H^{s}(\om)) \cap \mc^1([0,T];H^{s-1}(\om)),  \cr
& n \in \mc([0,T];H^{s}(\om)) \cap\mc^1([0,T];H^{s-1}(\om)),\quad \mbox{and} \cr
& v\in \mc([0,T];H^{s}(\om))\cap L^2(0,T;H^{s+1}(\om)) \Big\}.
\end{aligned}
\end{align*}

\begin{theorem}\label{thm_ext} Let $s > 5/2$. Suppose that the initial data $(h_0,u_0,n_0,v_0)$ satisfy
\begin{align}\label{ass_ini}
\begin{aligned}
&(i)\,\,\,\, 1 + \inf_{x \in \om} n_0(x) > 0,\cr
&(ii)\,\, (h_0,u_0,n_0,v_0) \in H^s(\om) \times H^s(\om) \times H^s(\om) \times H^s(\om).
\end{aligned}
\end{align}
If $\|(h_0,u_0,n_0,v_0)\|_{H^s} \leq \varepsilon_1$ for $\varepsilon_1 >0$ small enough, the Cauchy problem \eqref{main_eq}-\eqref{ini_main_eq} has a unique global classical solution $(h,u,n,v) \in \mathcal{I}_s(\infty)$. 
\end{theorem}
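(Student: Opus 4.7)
Local existence of a unique solution in $\mathcal{I}_s(T^*)$ on a short interval $[0,T^*]$ follows from the classical theory for symmetric--hyperbolic/parabolic couplings: after the substitution $h=\ln\rho$ the $(h,u)$-block of \eqref{main_eq} is symmetric hyperbolic in the sense of Friedrichs, the $(n,v)$-block is a compressible Navier--Stokes system whose linearization at the reference state $(0,0)$ is strictly parabolic in $v$ (thanks to $\mu>0$ and $2\mu+\lambda>0$), and the two blocks are coupled through smooth, $H^{s-1}$-bounded source terms. The smallness of $(h_0,u_0,n_0,v_0)$ together with $1+\inf n_0>0$ keeps $n+1$ bounded away from $0$ on a short time interval, so $p(n+1)$ remains strictly hyperbolic. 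The global statement then reduces, by a standard continuation criterion, to a uniform in time a priori estimate on $\|(h,u,n,v)\|_{H^s}$.

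The core of the proof is therefore that estimate. Define
\[
\mathcal{E}(t):=\|(h,u,n,v)(t)\|_{H^s}^2,\qquad \mathcal{D}(t):=\|u-v\|_{H^s}^2+\|\nabla v\|_{H^s}^2,
\]
and make the bootstrap assumption $\sup_{0\leq t\leq T}\mathcal{E}(t)\leq\delta^2$ with $\delta$ small. The plan is to apply $\pa^\alpha$ with $|\alpha|\leq s$ to the four equations in \eqref{main_eq} and test against the symmetrizing variables ($\pa^\alpha h$ and $e^h\pa^\alpha u$ for the Euler block; $p'(n+1)(n+1)^{-1}\pa^\alpha n$ and $(n+1)\pa^\alpha v$ for the Navier--Stokes block), handling the resulting commutators via Moser/Kato--Ponce inequalities together with $H^{s-1}\hookrightarrow L^\infty$ (valid since $s-1>3/2$). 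The drag contributions from the two momentum equations combine into $-\int e^h|\pa^\alpha(u-v)|^2\,dx$, producing the $\|u-v\|_{H^s}^2$ piece of $\mathcal{D}$, while the Lam\'e operator supplies $\mu\|\nabla\pa^\alpha v\|_{L^2}^2+(\mu+\lambda)\|\nabla\cdot\pa^\alpha v\|_{L^2}^2$. The outcome is
\[
\tfrac{d}{dt}\mathcal{E}+c_0\mathcal{D}\leq C\sqrt{\mathcal{E}}\,\mathcal{D}+C\mathcal{E}.
\]

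The main obstacle is that $\mathcal{D}$ does not control $\|h\|_{H^s}$, $\|n\|_{H^s}$, or the full $\|u\|_{H^s}$, so the inequality above is not coercive in $\mathcal{E}$ and cannot close by itself. To recover the missing dissipation I would introduce the Matsumura--Nishida-type interaction functional
\[
\mathcal{K}(t):=\eta\!\!\sum_{|\alpha|\leq s-1}\!\int_\om \pa^\alpha u\cdot\nabla\pa^\alpha h\,dx+\eta\!\!\sum_{|\alpha|\leq s-1}\!\int_\om \pa^\alpha v\cdot\nabla\pa^\alpha n\,dx,
\]
with $\eta>0$ small. Differentiating $\mathcal{K}$ in time and substituting the evolution equations for $\pa_t u,\pa_t h,\pa_t v,\pa_t n$, the leading-order contributions after integration by parts are $-\|\nabla h\|_{H^{s-1}}^2$ and $-\gamma\|\nabla n\|_{H^{s-1}}^2$; the divergence-type errors $\|\nabla\cdot u\|_{H^{s-1}}^2$ and $\|\nabla\cdot v\|_{H^{s-1}}^2$ that also appear are either already in $\mathcal{D}$ or, via $\|\nabla\cdot u\|\leq\|\nabla\cdot(u-v)\|+\|\nabla\cdot v\|$, absorbed into it. Since $\eta$ is small, $\mathcal{E}+\mathcal{K}\sim\mathcal{E}$, and summing the two differential inequalities yields
\[
\tfrac{d}{dt}(\mathcal{E}+\mathcal{K})+c_1\bigl(\mathcal{D}+\|\nabla h\|_{H^{s-1}}^2+\|\nabla n\|_{H^{s-1}}^2\bigr)\leq C\sqrt{\mathcal{E}}\cdot(\text{dissipation}),
\]
which, for $\delta$ sufficiently small, gives $\mathcal{E}(t)\leq C\mathcal{E}(0)$. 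This improves the bootstrap and lets the local solution be continued to $\mathcal{I}_s(\infty)$.

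The technically most delicate point will be the interaction functional: one has to verify that every commutator and nonlinear error produced by differentiating $\mathcal{K}$ in time is genuinely absorbable into $\sqrt{\mathcal{E}}\,\mathcal{D}$ (not merely bounded by $\mathcal{E}$), and on $\om=\R^3$ this must be done without any Poincar\'e-type inequality. In particular, since neither the drag nor the viscosity dissipates the zero-mode of $u$, one must separate averages from fluctuations and treat the constant (or spatial-mean) parts of the unknowns by a dedicated ODE-type argument — this is exactly the ``fluctuations of momentum and mass from their averaged quantities'' referenced in the abstract, and it is where the bulk of the technical work will lie.
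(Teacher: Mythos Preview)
Your overall architecture (local existence, Matsumura--Nishida cross terms for $h$ and $n$, continuation) matches the paper, but two concrete points would prevent the argument from closing as written.

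\textbf{The symmetrizer for the Euler block.} After the substitution $h=\ln\rho$ the $(h,u)$ system is already Friedrichs-symmetric with the \emph{identity} symmetrizer: the off-diagonal pieces $\nabla\cdot u$ (in the $h$-equation) and $\nabla h$ (in the $u$-equation) cancel exactly when you test against $\partial^\alpha h$ and $\partial^\alpha u$. Your asymmetric choice---weighting $u$ by $e^h$ but not $h$---destroys this cancellation: the residual $\int (e^h-1)\,\partial^\alpha h\,\nabla\!\cdot\!\partial^\alpha u\,dx$ at $|\alpha|=s$ carries an $(s{+}1)$-th derivative that cannot be integrated away. The paper uses the identity symmetrizer (Lemma~3.2), and consequently the drag does \emph{not} combine into $-\int e^h|\partial^\alpha(u-v)|^2$; instead it yields $-\tfrac34\|\nabla^k u\|_{L^2}^2+\|\nabla^k v\|_{L^2}^2$, with the $v$-excess absorbed by viscosity from the Navier--Stokes block.

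\textbf{The $L^2$ level and the role of induction.} Your differential inequality contains a bare $+C\mathcal{E}$ (and, even if you drop it, the transport commutators at $|\alpha|=0$ produce errors like $\|\nabla u\|_{L^\infty}\|h\|_{L^2}^2\le C\sqrt{\mathcal{E}}\,\|(h,u,n,v)\|_{L^2}^2$). Neither $\|h\|_{L^2}$, $\|n\|_{L^2}$, nor $\|v\|_{L^2}$ appears in your dissipation, and on $\Omega=\mathbb{R}^3$ there is no Poincar\'e inequality to recover them; Gr\"onwall then gives only $\mathcal{E}(t)\le e^{C\delta t}\mathcal{E}(0)$, which does not close the bootstrap globally. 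The paper avoids this by treating $k=0$ via the \emph{exact} physical energy identity (Lemma~3.1/Remark~2.1), which has no error term at all, and then proceeds by \emph{induction on $k$}: at level $k=m+1$ all lower-order right-hand sides are bounded by $C\mathcal{H}_0(m)$, a constant, rather than by $C\mathcal{E}$. That constant source is harmless under Gr\"onwall. Your final paragraph misidentifies the remedy: the ``fluctuations of momentum and mass'' Lyapunov functional and the ODE-type treatment of averages belong to the large-time behavior result (Theorem~1.2 on $\mathbb{T}^3$), not to the existence proof; for Theorem~1.1 the zero-mode issue is handled entirely by the exact $L^2$ energy, with no average-splitting required.
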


The proof of Theorem \ref{thm_ext} is provided in Section \ref{sec:ap}. The main difficulty is the part of isothermal Euler equations in the system \eqref{main_eq}. Because it is well-known that the compressible Euler equations have the formation of singularities in a finite-time even with smooth initial data. Concerning the issue of development of the singularity, the coupled system \eqref{main_eq} still presents a new challenge to the global existence of classical solutions. Inspired by the author's recent work \cite{Choi1}, we reinterpret the drag forcing term as the relative damping together with using the smoothing effect of viscosity in the compressible Navier-Stokes equations $\eqref{main_eq}_3$-$\eqref{main_eq}_4$ through the drag forcing to prevent the development of the finite-time singularities. Although the drag forcing term does not give the real damping effect, our careful analysis enable us to obtain uniform bounds on the density.

Our second result is about the large-time behavior of classical solutions in the periodic domain, i.e., $\om = \T^3$. For this, we introduce a Lyapunov function measuring the fluctuation of momentum and mass from the corresponding averaged quantities:
\begin{equation}\label{Lyap}
\mathcal{L}(t):= \int_{\T^3} \rho |u-m_c|^2 dx + \int_{\T^3} (\rho - \rho_c)^2\,dx + \int_{\T^3} n| v - j_c|^2 dx + |m_c - j_c|^2 + \int_{\T^3} (n-n_c)^2 dx, 
\end{equation}
where the averaged quantities $m_c, j_c, \rho_c$, and $n_c$ are given by
\begin{equation}\label{def1}
m_c(t) := \frac{\int_{\T^3} \rho u\, dx}{\int_{\T^3} \rho \,dx}, \quad j_c(t) := \frac{\int_{\T^3} n v \,dx}{\intt n\,dx}, \quad \rho_c(t) := \int_{\T^3} \rho \, dx, \quad\mbox{and} \quad n_c(t) := \intt n\,dx.
\end{equation}

\begin{theorem}\label{thm_large} Let $(\rho,u,n,v)$ be any global classical solutions to the system \eqref{f_eq}-\eqref{ini_f_eq}. Suppose that the following conditions hold.
$$\begin{aligned}
&(i) \,\,\,\,\,\, \rho \in [0,\bar \rho], \,\,n \in [0, \bar n], \,\, \rho_c(0), \,\,n_c(0) \in (0,\infty) \quad \mbox{for some} \quad \bar \rho, \bar n >0,\cr
&(ii) \,\,\,\, u,v \in L^\infty(\T^3 \times \R_+),\cr
&(iii)\,\, \mbox{An initial energy } \widetilde E_0 := \intt \rho_0|u_0|^2\,dx + \intt (\rho_0 - \rho_c(0))^2\,dx + \intt n_0|v_0|^2\,dx \cr
&\qquad \qquad \qquad \qquad\qquad \qquad+ \intt (n_0 - n_c(0))^2\,dx \quad \mbox{is sufficiently small.}
\end{aligned}$$
Then we have
\[
\ml(t) \ls \ml_0 e^{-Ct}, \quad t \geq 0,
\]
where $\ml_0 := \ml(0)$ and $C$ is a positive constant independent of $t$. Here $f \ls g$ represents that there exists a positive constant $C>0$ such that $f \leq C g$.
\end{theorem}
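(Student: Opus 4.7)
The approach is a weighted relative-energy computation paired with a hypocoercivity correction. First, integrating the two continuity equations over $\T^3$ shows $\dot\rho_c=\dot n_c=0$, while integrating the momentum equations gives
\[
\rho_c\dot m_c = -\intt\rho(u-v)\,dx = -n_c\dot j_c.
\]
Since $\int_{\T^3}\rho(u-m_c)\,dx = \int_{\T^3} n(v-j_c)\,dx = 0$ by the very definition of $m_c,j_c$, one expands $\intt\rho(u-v) = \rho_c(m_c-j_c)+R$ with a fluctuation remainder $R$ bounded by $\sqrt{\ml}$ times a dissipation quantity, which already yields linear damping of $|m_c-j_c|^2$ at rate $1+\rho_c/n_c$ up to lower order.

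Next, I multiply the first momentum equation by $(u-m_c)$ and the second by $(v-j_c)$. The transport terms cancel by the standard symmetric identity with the continuity equations, and the $\dot m_c,\dot j_c$ cross-terms vanish thanks to the orthogonality above. Using $\int(u-m_c)\cdot\nabla\rho = -\int(\rho-\rho_c)\nabla\cdot u$ and Taylor-expanding $p(n)=n^\gamma$ about $n_c$, one obtains
\begin{align*}
\tfrac{d}{dt}\intt\rho|u-m_c|^2 &= 2\intt(\rho-\rho_c)\nabla\cdot u - 2\intt\rho(u-m_c)\cdot(u-v),\\
\tfrac{d}{dt}\intt n|v-j_c|^2 + 2\mu\intt|\nabla v|^2 + 2(\mu+\lambda)\intt|\nabla\cdot v|^2 &= 2\gamma n_c^{\gamma-1}\intt(n-n_c)\nabla\cdot v\\
&\quad + 2\intt\rho(v-j_c)\cdot(u-v) + \text{cubic},
\end{align*}
while separately $\frac{d}{dt}\intt(\rho-\rho_c)^2 = -2\rho_c\intt(\rho-\rho_c)\nabla\cdot u + \text{cubic}$ (and similarly for $n-n_c$). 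In the weighted combination
\[
\wt\ml := \intt\rho|u-m_c|^2 + \tfrac{1}{\rho_c}\intt(\rho-\rho_c)^2 + \intt n|v-j_c|^2 + \gamma n_c^{\gamma-2}\intt(n-n_c)^2 + \alpha|m_c-j_c|^2,
\]
the linear pressure--density cross-terms cancel exactly, and choosing $\alpha$ slightly larger than $\rho_c n_c/(\rho_c+n_c)$ closes the computation to
\[
\tfrac{d}{dt}\wt\ml + 2\intt\rho|u-v|^2 + 2\mu\intt|\nabla v|^2 + 2(\mu+\lambda)\intt|\nabla\cdot v|^2 + c_0|m_c-j_c|^2 \le C\sqrt{\ml}\,(\text{dissipation})^{1/2}.
\]

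The chief obstacle is that this dissipation does \emph{not} control the density fluctuations $\intt(\rho-\rho_c)^2$ and $\intt(n-n_c)^2$: the isothermal Euler block carries no viscous smoothing on $\rho$, and the drag is the only mechanism that can transfer dissipation to it. To provide the missing damping I would introduce the hypocoercivity correctors
\[
\mh_\rho := \intt u\cdot\nabla\phi\,dx,\qquad \mh_n := \intt v\cdot\nabla\psi\,dx,
\]
where $\phi,\psi$ are the zero-mean solutions of $-\Delta\phi = \rho-\rho_c$ and $-\Delta\psi = n-n_c$ on $\T^3$. Differentiating $\mh_\rho$ via $\pa_t u = -u\cdot\nabla u - \nabla\ln\rho - (u-v)$, the leading contribution $-\int\nabla\ln\rho\cdot\nabla\phi = -\rho_c^{-1}\intt(\rho-\rho_c)^2 + O(\text{cubic})$ (from $\ln\rho \approx \ln\rho_c + (\rho-\rho_c)/\rho_c$ and $\int(\rho-\rho_c) = 0$) supplies the required damping; the remaining terms carry the elliptic bound $\|\nabla\phi\|_{L^2}\ls\|\rho-\rho_c\|_{L^2}$ and are absorbable by Young's inequality into the existing dissipation or into $\wt\ml$ itself. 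An analogous computation handles $\mh_n$ through the Navier--Stokes momentum equation.

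Finally, setting $\ml^\sharp := \wt\ml + \delta(\mh_\rho + \mh_n)$ with $\delta>0$ small, so that $\ml^\sharp$ is equivalent to $\ml$, the assembled estimates give $\frac{d}{dt}\ml^\sharp + c\,\ml^\sharp \le 0$ provided $\ml(t)$ remains below a fixed threshold $\e_0$. Poincar\'e on $\T^3$ combined with $|\bar v - j_c|\ls\|n-n_c\|_{L^2}\|v\|_{L^\infty}$ and the pointwise bound $\rho\ge\rho_c/2$ (propagated by smallness) transfers the dissipation to $\intt|u-m_c|^2 + \intt|v-j_c|^2$, closing the differential inequality. A standard continuation/bootstrap argument using the initial smallness $\wt E_0 \ll 1$ together with the uniform $L^\infty$ bounds on $(\rho,n,u,v)$ propagates the smallness for all $t\ge 0$, and Gr\"onwall's inequality then delivers $\ml(t)\ls\ml_0 e^{-Ct}$. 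The principal technical difficulty lies in the hypocoercivity step: it encodes that the damping of density fluctuations is indirect, mediated by the drag--pressure coupling rather than any direct parabolic smoothing.
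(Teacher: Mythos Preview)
Your overall architecture matches the paper's: a relative-energy functional whose time derivative produces the dissipation $\md=\mu\int|\nabla v|^2+(\mu+\lambda)\int|\nabla\cdot v|^2+\int\rho|u-v|^2$, augmented by hypocoercivity correctors built from the inverse Laplacian (equivalently, the paper's Bogovskii-type operator $\mb[f]=\nabla\Delta^{-1}f$) to recover damping on $\|\rho-\rho_c\|_{L^2}^2$ and $\|n-n_c\|_{L^2}^2$. The Poincar\'e step and the $|m_c-j_c|$ control are also handled the same way. So the strategy is right.

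There is, however, a genuine gap in the implementation. The theorem assumes only $\rho\in[0,\bar\rho]$ and $n\in[0,\bar n]$, with smallness of the \emph{$L^2$-type} energy $\widetilde E_0$; no lower bound on $\rho$ or $n$ is available, and the claim that ``$\rho\ge\rho_c/2$ is propagated by smallness'' is not justified by these hypotheses. This matters because your corrector $\mh_\rho=\int u\cdot\nabla\phi$ uses the \emph{velocity} $u$, so differentiating via $\partial_t u=-u\cdot\nabla u-\nabla\ln\rho-(u-v)$ produces the term $-\int(u-v)\cdot\nabla\phi$, and the only available drag dissipation is $\int\rho|u-v|^2$. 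Without $\rho$ bounded away from zero you cannot absorb $\|u-v\|_{L^2}$ into $(\int\rho|u-v|^2)^{1/2}$. The paper avoids this by taking the corrector to be $\int\rho(u-m_c)\cdot\mb[\rho-\rho_c]$ and differentiating via the \emph{momentum} equation $\partial_t(\rho u)=\cdots-\rho(u-v)$: then every drag-related term already carries the weight $\rho$ and can be paired with $\int\rho|u-v|^2$ by weighted Cauchy--Schwarz, never needing $\rho^{-1}$. The same remark applies to $\mh_n$ versus the paper's $\int n(v-j_c)\cdot\mb[n-n_c]$. A second, smaller issue is that replacing the exact pressure potentials $\int\rho\int_{\rho_c}^\rho\frac{s-\rho_c}{s^2}\,ds$ and $\int n\int_{n_c}^n\frac{s^\gamma-n_c^\gamma}{s^2}\,ds$ by their quadratic approximations makes the base energy identity only approximate, with residual cubic terms such as $\int(\rho-\rho_c)^2\nabla\cdot u$ that require $\nabla u\in L^\infty$, which again is not among the hypotheses. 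The paper's exact potentials give $\frac{d}{dt}\me+\md=0$ with no error. Both issues are repaired by mimicking the paper's choices: exact pressure potentials in the base functional, and momentum-weighted correctors $\rho(u-m_c)$, $n(v-j_c)$ in the hypocoercivity step.
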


A well chosen Lyapunov function $\ml$ in \eqref{Lyap} is very important in the proof of Theorem \ref{thm_large}. Motivated by \cite{Choi1,Choi2}, we first find a temporal energy function $\me$ given in \eqref{tem_e} by taking into account the conservation of total momentum (see Lemma \ref{lem_energy}). On the other hand, we can not get the correct dissipation rate for the convergence from the energy estimate of $\me$. In order to overcome this difficulty, we consider a perturbed energy function $\me^{\sigma_1,\sigma_2}$ defined in \eqref{f_per} by employing a type of Bogovskii's estimate in the periodic domain(see Lemma \ref{lem_bogo}). Together with a careful analysis using several technical lemmas presented in Section \ref{sec:pre}, this makes it possible to obtain the desired estimate of large-time behavior of solutions.

Unfortunately, our strategy for the estimate of large-time behavior of classical solutions can not be applied to the whole space case since we use the Poincar\'e inequality to get proper dissipation rates from the drag force(see Lemma \ref{lem_gg}). To the best of author's knowledge, the large-time behavior for the types of Vlasov/Naiver-Stokes or Euler/Navier-Stokes equations in the whole space is still an open issue. 

\begin{remark}1. The global solution obtained in Theorem \ref{thm_ext} satisfies the assumptions in Theorem \ref{thm_large}. 

2. The estimate of large-time behavior can be essentially used for global well-posedness of solutions for types of two-phase fluid models. For instance, in \cite{CK, HKK}, the pressureless Euler/Navier-Stokes equations are considered, and the {\it a priori} large-time behavior estimate together with the bootstrapping argument played an important role in constructing the global classical solutions in time.

3. In the perturbation framework employed in \eqref{main_eq}, the Lyapunov functional $\ml$ in \eqref{Lyap} will be replaced by
\[
\ml_p := \int_{\T^3} e^h |u-\tilde m_c|^2 dx + \int_{\T^3} (e^h - 1)^2\,dx + \int_{\T^3} (n+1)| v - \tilde j_c|^2 dx + |\tilde m_c - \tilde j_c|^2 + \int_{\T^3} n^2 dx,
\]
with 
\[
\tilde m_c(t) := \int_{\T^3} e^h u\, dx \quad \mbox{and} \quad \tilde j_c(t) := \int_{\T^3} (n+1) v \,dx,
\]
due to $\intt n_0\,dx = 0$ and $\intt e^{h_0} \,dx = 1$. Then, under the assumptions on the initial data \eqref{ass_ini}, we can easily find that the conditions in Theorem \ref{thm_large} are verified, and this yields that the Lyapunov functional $\ml_p$ satisfies
\[
\ml_p(t) \ls \ml_p(0)e^{-Ct}, \quad t \geq 0.
\]
Furthermore, we can obtain the convergences of $u$ and $v$ to $(\tilde m_c(0) + \tilde j_c(0))/2$ in $L^\infty(\T^3)$ as time goes to infinity exponentially fast. This shows the alignment between two fluid velocities. More precisely, we use the Gagliardo-Nirenberg interpolation inequality to get
\[
\|u - \tilde m_c\|_{L^\infty(\T^3)} \ls \|u - \tilde m_c\|_{H^{s'}(\T^3)} \ls \|u - \tilde m_c\|_{L^2(\T^3)}^{1 - \beta}\|u - \tilde m_c\|_{H^{s' + 1}(\T^3)}^{\beta} \ls e^{- \tilde C t},
\]
for some $\tilde C > 0$, where $\beta = \frac{s'}{s'+1}$ with $s' > 3/2$ and we used 
\[
\intt |u - \tilde m_c|^2\,dx = \intt \frac{e^h}{e^h}|u - \tilde m_c|^2\,dx \leq C\intt e^h | u -\tilde m_c|^2 \,dx \ls \ml_p(0) e^{-Ct},
\]
and $u - \tilde m_c \in H^{s'+1}(\T^3)$ due to $\|h\|_{L^\infty(\T^3)} \ls \epsilon_0 \ll 1$ and $s'+1 > 5/2$. Similarly, we deduce
\[
\|v - \tilde j_c\|_{L^\infty(\T^3)} \ls e^{-\tilde Ct} \quad \mbox{for some} \quad \tilde C > 0.
\]
On the other hand, it follows from the conservation of total momentum(see Lemma \ref{lem_energy}) that
\[
\tilde m_c(t) + \tilde j_c(t) = \tilde m_c(0) + \tilde j_c(0),
\]
and this implies
\[
|\tilde m_c(t) - \tilde j(t)| = 2\lt|\tilde m_c(t) - \frac12\lt(\tilde m_c(0) + \tilde j_c(0) \rt) \rt| = 2\lt|\tilde j_c(t) - \frac12\lt(\tilde m_c(0) + \tilde j_c(0) \rt) \rt|
\]
Combining the all of the above ingredients, we have
$$\begin{aligned}
&\lt\|u - \frac12\lt( \tilde m_c(0) + \tilde j_c(0) \rt)\rt\|_{L^\infty(\T^3)} + \lt\|v - \frac12\lt( \tilde m_c(0) + \tilde j_c(0) \rt)\rt\|_{L^\infty(\T^3)}\cr
&\ \leq \| u - \tilde m_c(t)\|_{L^\infty(\T^3)} + \lt| \tilde m_c(t) - \frac12\lt( \tilde m_c(0) + \tilde j_c(0) \rt)\rt| + \| v- \tilde j_c\|_{L^\infty(\T^3)} + \lt|\tilde j_c(t) - \frac12\lt( \tilde m_c(0) + \tilde j_c(0) \rt) \rt|\cr
&\ = \|u - \tilde m_c\|_{L^\infty(\T^3)} + |\tilde m_c - \tilde j_c | + \|v - \tilde j_c\|_{L^\infty(\T^3)}\cr
&\ \ls e^{-\tilde C t} \quad \mbox{for some} \quad \tilde C > 0.
\end{aligned}$$
\end{remark}

\subsection{Organization of the paper} In Section \ref{sec:pre}, we provide several useful estimates and {\it a priori} energy estimates that will play an important role later. Section \ref{sec:ap} is devoted to show the local and global existence of classical solutions and concludes the proof of Theorem \ref{thm_ext}. For the local existence and uniqueness of classical solutions to the reformulated system \eqref{main_eq}, the standard arguments developed for the types of conservation laws can be applied. Global existence of classical solutions is then obtained by the {\it a priori} estimates of solutions with the aid of the careful analysis of the drag forcing term. Finally, in Section \ref{sec:lg}, we give a detailed proof of the large-time behavior of global classical solutions to the original system \eqref{f_eq} showing the exponential alignment between two fluid velocities and convergences of densities to its averaged quantities as presented in Theorem \ref{thm_large}. \newline

Before closing the section, we introduce several notations used throughout the paper. For a function $f(x)$, $\|f\|_{L^p}$ denotes the usual $L^p(\T^3)$-norm. We also denote by $C$ a generic positive constant depending only on the norms of the data, but independent of $T$. For simplicity, we often drop $x$-dependence of differential operators $\nabla_x$, that is, $\nabla f := \nabla_x f$ and $\Delta f := \Delta_x f$. For any nonnegative integer $s$, $H^s$ denotes the $s$-th order $L^2$ Sobolev space. $\mc^s([0,T];E)$ is the set of $s$-times continuously differentiable functions from an interval $[0,T]\subset \R$ into a Banach space $E$, and $L^p(0,T;E)$ is the set of the $L^2$ functions from an interval $(0,T)$ to a Banach space $E$. $\nabla^s$ denotes any partial derivative $\pa^\alpha$ with multi-index $\alpha, |\alpha| = s$. 

%
%
%
%
\section{Preliminaries}\label{sec:pre}
In this section, we provide several useful Sobolev inequalities and energy estimates for the system \eqref{f_eq}. All of these technical estimates will be significantly used later for the {\it a priori} estimates of solutions and the large-time behavior.

We first recall Moser-type inequality.
\begin{lemma}\label{lem:bern} (i) For any pair of functions $f,g \in (H^k \cap L^\infty)(\om)$, we obtain
\[
\|\nabla^k(fg)\|_{L^2} \ls \|f\|_{L^\infty}\|\nabla^k g\|_{L^2} + \|\nabla^k f\|_{L^2}\|g\|_{L^\infty}.
\]
Furthermore if $\nabla f \in L^\infty(\om)$ we have
\[
\|\nabla^k(fg) - f\nabla^k g\|_{L^2} \ls \|\nabla f\|_{L^\infty}\|\nabla^{k-1}g\|_{L^2} + \|\nabla^k f\|_{L^2}\|g\|_{L^\infty}.
\]
(ii) Let $k \in \N, p \in [1,\infty], f \in \mc^k(\om)$. Then there exists a positive constant $c = c(k,p,h)$ such that 
\[
\|\nabla^k f(w)\|_{L^p} \leq c\|w\|_{L^\infty}^{k-1}\|\nabla^k w\|_{L^p},
\]
for all $w \in (W^{k,p} \cap L^\infty)(\om)$.
\end{lemma}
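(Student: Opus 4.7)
The plan is to prove Lemma \ref{lem:bern} by reducing both parts to the standard toolkit of Leibniz expansion plus Gagliardo--Nirenberg interpolation. Since the statement is purely local, I will work on $\om \subset \R^3$ using the usual inequality
\[
\|\nabla^j w\|_{L^{2k/j}} \ls \|w\|_{L^\infty}^{1-j/k}\,\|\nabla^k w\|_{L^2}^{j/k}, \qquad 0 \leq j \leq k,
\]
which is the workhorse behind every step.

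For the first inequality in (i), I would first write $\nabla^k(fg)$ via the Leibniz rule as $\sum_{j=0}^k \binom{k}{j}(\nabla^j f)(\nabla^{k-j} g)$. The two endpoint terms $j=0$ and $j=k$ are already bounded by the right-hand side. For the middle terms $1\leq j\leq k-1$, I apply H\"older with exponents $2k/j$ and $2k/(k-j)$:
\[
\|(\nabla^j f)(\nabla^{k-j}g)\|_{L^2} \leq \|\nabla^j f\|_{L^{2k/j}}\,\|\nabla^{k-j}g\|_{L^{2k/(k-j)}},
\]
and then apply the Gagliardo--Nirenberg inequality above to each factor. The exponents on $\|f\|_{L^\infty}$, $\|\nabla^k f\|_{L^2}$, $\|g\|_{L^\infty}$, $\|\nabla^k g\|_{L^2}$ combine as $(1-j/k)+(j/k)=1$ on each side, so by Young's inequality each middle term is controlled by $\|f\|_{L^\infty}\|\nabla^k g\|_{L^2} + \|\nabla^k f\|_{L^2}\|g\|_{L^\infty}$, as required. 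For the commutator estimate, I note that $\nabla^k(fg) - f\nabla^k g$ is precisely the Leibniz sum with $j=0$ removed; on each remaining term at least one derivative sits on $f$. Writing $\nabla^j f = \nabla^{j-1}(\nabla f)$, the same H\"older/Gagliardo--Nirenberg argument now produces factors of $\|\nabla f\|_{L^\infty}$ and $\|\nabla^{k-1} g\|_{L^2}$ (together with the endpoint $\|\nabla^k f\|_{L^2}\|g\|_{L^\infty}$), which matches the stated bound.

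For (ii), I would argue by Fa\`a di Bruno's formula: $\nabla^k f(w)$ is a finite linear combination of terms of the form $f^{(m)}(w)\prod_{i=1}^m \nabla^{k_i} w$ with $k_1+\dots+k_m = k$ and $1 \leq m \leq k$. Each factor $f^{(m)}(w)$ is bounded in $L^\infty$ by $\|f\|_{\mc^k}$ since $w \in L^\infty$ takes values in a compact set (absorb this constant into $c$). The remaining product is handled by H\"older with exponents $kp/k_i$ and Gagliardo--Nirenberg: $\|\nabla^{k_i} w\|_{L^{kp/k_i}} \ls \|w\|_{L^\infty}^{1-k_i/k}\|\nabla^k w\|_{L^p}^{k_i/k}$. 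Multiplying over $i$, the $\|w\|_{L^\infty}$ exponents sum to $m - 1$ and the $\|\nabla^k w\|_{L^p}$ exponents sum to $1$, giving $\|w\|_{L^\infty}^{m-1}\|\nabla^k w\|_{L^p}$; since $m \leq k$ and $\|w\|_{L^\infty}^{m-1} \leq \max(1,\|w\|_{L^\infty})^{k-1}$, the desired bound follows (one can also simply absorb a factor of $(1+\|w\|_{L^\infty})^{k-m}$ into the constant when the stated form is read with $\|w\|_{L^\infty}^{k-1}$).

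The only real bookkeeping obstacle is verifying that the Gagliardo--Nirenberg exponents in (i) really balance to give the clean product form on the right-hand side; this is where one has to be careful that $j/k + (k-j)/k = 1$ lets Young's inequality close the estimate without leaving cross terms like $\|f\|_{L^\infty}^{1-j/k}\|\nabla^k g\|_{L^2}^{j/k}\cdot(\cdots)$ uncontrolled. Once that algebraic check is done, both parts of the lemma follow from the same two ingredients, and the argument transfers verbatim between $\om = \T^3$ and $\om = \R^3$ since Gagliardo--Nirenberg is available on both.
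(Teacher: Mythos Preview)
The paper does not actually prove this lemma; it is simply ``recalled'' as a standard Moser-type inequality with no argument given. Your sketch is the standard proof (Leibniz expansion plus Gagliardo--Nirenberg interpolation for (i), Fa\`a di Bruno plus the same interpolation for (ii)) and is essentially correct. One small point worth tightening in (ii): the Fa\`a di Bruno terms with $m$ factors produce $\|w\|_{L^\infty}^{m-1}\|\nabla^k w\|_{L^p}$, and for $m<k$ this is \emph{not} bounded by $\|w\|_{L^\infty}^{k-1}\|\nabla^k w\|_{L^p}$ when $\|w\|_{L^\infty}<1$; the statement as written in the paper should really be read with $(1+\|w\|_{L^\infty})^{k-1}$ on the right or with an implicit assumption that $\|w\|_{L^\infty}$ is bounded (which is how it is used later, under the smallness hypothesis). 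Your parenthetical remark already flags this, so you are aware of it.
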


In the lemmas below, we present useful estimates which give some information for the evolution of the densities $\rho$ and $n$. For the details of the proof, we refer to \cite{Choi1}.
\begin{lemma}\label{lem_press}
1. Let $r_0, \bar{r} > 0$ and $\gamma \geq 1$ be
given constants, and set
\[
f(\gamma,r;r_0) := r\int_{r_0}^{r} \frac{s^{\gamma} - r_0^{\gamma}}{s^2} \,ds,
\]
for $r \in [0,\bar{r}]$. Then, there exists a positive constant
$C>0$ such that
\[
\frac{1}{C(\gamma,r_0, \bar{r})}(r - r_0)^2 \leq f(\gamma,r;r_0) \leq C(\gamma,r_0, \bar{r})(r - r_0)^2 \quad \mbox{for all } r \in [0,\bar{r}].
\]
2. There hold
\[
\frac{d}{dt}\int_\om \rho \ln \rho\,dx = \frac{d}{dt}\int_\om f(1,\rho\,;1)dx
\]
and
\[
\frac{1}{\gamma-1}\frac{d}{dt}\int_{\om} (n+1)^\gamma dx = \frac{d}{dt}\int_{\om} f(\gamma,n+1;1) dx.
\]
\end{lemma}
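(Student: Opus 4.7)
The plan is to prove part 1 by combining an explicit evaluation of the integral with a L'H\^opital-type limit at $r=r_0$ and a compactness argument on $[0,\bar r]$; part 2 is then a bookkeeping identity that reduces to conservation of mass for the two continuity equations.

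For part 1, I first compute the antiderivative of $(s^\gamma-r_0^\gamma)/s^2$ termwise, obtaining
$$f(\gamma,r;r_0) = \frac{r^\gamma - r_0^{\gamma-1}r}{\gamma-1} + r_0^\gamma - r_0^{\gamma-1}r \quad (\gamma>1),$$
and $f(1,r;r_0) = r\ln(r/r_0)-r+r_0$ in the case $\gamma=1$. These expressions extend continuously to $r=0$ with values $r_0^\gamma$ and $r_0$ respectively (using $r\ln r\to 0$). Strict positivity on $[0,\bar r]\setminus\{r_0\}$ follows from the sign of the integrand: $(s^\gamma-r_0^\gamma)/s^2$ has the same sign as $s-r_0$, so the integral has the same sign as $r-r_0$ and becomes strictly positive after multiplication by $r>0$; the endpoint $r=0$ is covered by the explicit formula. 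At $r=r_0$, two applications of L'H\^opital (equivalently, a Taylor expansion of $f$ to second order) yield
$$\lim_{r\to r_0}\frac{f(\gamma,r;r_0)}{(r-r_0)^2} = \frac{\gamma\, r_0^{\gamma-2}}{2} > 0,$$
so $F(r) := f(\gamma,r;r_0)/(r-r_0)^2$ extends continuously and strictly positively to all of $[0,\bar r]$. On this compact interval $F$ thus attains finite positive lower and upper bounds depending only on $\gamma$, $r_0$, $\bar r$, which is exactly the claimed two-sided inequality.

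For part 2, the explicit formulas above give the algebraic identities
$$\rho\ln\rho - f(1,\rho;1) = \rho-1, \qquad \frac{1}{\gamma-1}(n+1)^\gamma - f(\gamma,n+1;1) = \frac{n+1}{\gamma-1} + n,$$
so in each case the difference is affine in the density. The continuity equations $\partial_t\rho+\nabla\cdot(\rho u)=0$ and $\partial_t n + \nabla\cdot((n+1)v)=0$, together with the periodicity or the assumed decay $\rho\to 1$ and $n\to 0$ at infinity, give $\frac{d}{dt}\int_\Omega(\rho-1)\,dx=0$ and $\frac{d}{dt}\int_\Omega n\,dx=0$. Integrating the algebraic identities over $\Omega$ and taking time derivatives therefore produces the two claimed equalities.

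The argument is essentially elementary, so no serious obstacle arises. The only mildly delicate point is the continuous extension of $f(\gamma,\cdot;r_0)$ at $r=0$, where the integrand $s^{-2}$ is improper; this is handled by the cancellations in the explicit antiderivative. On $\Omega=\R^3$ the identities in part 2 should be understood in the perturbation sense, so that $\int_\Omega(\rho-1)\,dx$, $\int_\Omega n\,dx$ and $\int_\Omega f(\gamma,\cdot;1)\,dx$ are finite, which is guaranteed by the quadratic bound $f\sim(r-r_0)^2$ established in part 1 together with the assumed decay of $\rho-1$ and $n$ at infinity.
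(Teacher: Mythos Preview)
The paper does not give its own proof of this lemma; it simply refers to \cite{Choi1}. So there is no approach in the paper to compare against, and your self-contained argument is a genuine addition.

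Your proof is correct. Part 1 via the explicit antiderivative, the Taylor/L'H\^opital limit at $r=r_0$, and compactness on $[0,\bar r]$ is clean and complete; part 2 reduces correctly to mass conservation via the algebraic identities you wrote down. One small slip in the sign discussion: the sentence ``the integral has the same sign as $r-r_0$ and becomes strictly positive after multiplication by $r>0$'' is not quite right. In fact $\int_{r_0}^r (s^\gamma-r_0^\gamma)s^{-2}\,ds$ is already \emph{nonnegative} for all $r>0$ (when $r<r_0$ you integrate a negative integrand over a reversed interval), so $f=r\cdot(\text{nonnegative})\ge 0$ directly. Your conclusion is unaffected; only the wording of the justification needs adjusting.
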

\begin{lemma}\label{lem_eqv}For $0 < a \leq f(x) \leq b$ with $a \leq 1 \leq b$, there exist positive constants $C(a)>0$ and $C(b) > 0$ such that
\[
C(b) \int_{\om} \lt(f - 1\rt)^2 dx \leq \int_{\om} \lt(\ln f\rt)^2 dx \leq C(a)\int_{\om} \lt(f - 1\rt)^2 dx,
\]
where $C(a)$ and $C(b)$ are given by
\[
C(a):= \max\lt\{1, \lt( \frac{\ln a}{1 - a}\rt)^2 \rt\} \quad \mbox{and} \quad C(b):= \min\lt\{1, \lt( \frac{\ln b}{b - 1}\rt)^2 \rt\},
\]
respectively.
\end{lemma}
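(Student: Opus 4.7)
The plan is to reduce the integral inequality to a pointwise one, and that pointwise inequality to the monotonicity of a single explicit function. I introduce
\[
g(r) := \frac{\ln r}{r-1} \quad (r \neq 1), \qquad g(1) := 1,
\]
which is continuous on $(0,\infty)$ by L'Hopital's rule and satisfies $\ln r = (r-1)\, g(r)$ for all $r > 0$. Squaring yields the identity $(\ln f(x))^2 = (f(x)-1)^2\, g(f(x))^2$ pointwise on $\om$, so the lemma will follow as soon as $g(f)^2$ is sandwiched uniformly between $C(b)$ and $C(a)$ on the range $[a,b]$.

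The heart of the argument is to show that $g$ is nonincreasing on $(0,\infty)$. Direct differentiation gives
\[
g'(r) = \frac{1 - 1/r - \ln r}{(r-1)^2},
\]
and the numerator $h(r) := 1 - 1/r - \ln r$ satisfies $h(1) = 0$ and $h'(r) = 1/r^2 - 1/r = (1-r)/r^2$, so $h$ attains its maximum at $r = 1$. Consequently $h \leq 0$ and hence $g' \leq 0$ on $(0,\infty)$. Combining this with $g(1) = 1$ gives $g(r) \geq 1$ for $r \leq 1$ and $g(r) \leq 1$ for $r \geq 1$. Observing that $\ln a/(1-a) = -g(a)$ and $\ln b/(b-1) = g(b)$, the definitions in the lemma can be rewritten as $C(a) = \max\{1, g(a)^2\}$ and $C(b) = \min\{1, g(b)^2\}$. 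For every $r \in [a,b]$ the monotonicity of $g$ and the hypothesis $a \leq 1 \leq b$ then give $g(b) \leq g(r) \leq g(a)$ with $g(a) \geq 1 \geq g(b)$, so that $C(b) \leq g(r)^2 \leq C(a)$.

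Multiplying this by $(f(x)-1)^2$, using the identity from the first paragraph, and integrating in $x$ over $\om$ yields both inequalities of the lemma. The boundary cases $a = 1$ or $b = 1$ are handled automatically by the max/min with $1$ in the definitions of $C(a)$ and $C(b)$, since the ratios $\ln a/(1-a)$ and $\ln b/(b-1)$ are continuously extended to the value $1$ at $r = 1$; no separate case analysis is required. The only nontrivial ingredient is the monotonicity of $g$, and that reduces to the elementary one-line computation above, so I do not expect any real obstacle.
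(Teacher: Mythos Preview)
Your proof is correct. The key observation---that $g(r)=\ln r/(r-1)$ is positive and nonincreasing on $(0,\infty)$, hence $g(r)^2$ is sandwiched between $g(b)^2$ and $g(a)^2$ on $[a,b]$---gives the pointwise inequality $C(b)(f-1)^2\le(\ln f)^2\le C(a)(f-1)^2$, and integration finishes the argument. The only small stylistic point is that the max/min in the definitions of $C(a)$ and $C(b)$ are in fact redundant once one knows $g(a)\ge 1\ge g(b)>0$; they serve only to assign a value when $a=1$ or $b=1$, as you noted.

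The paper does not actually prove this lemma: it is stated without proof and the reader is referred to \cite{Choi1} for details. Your argument therefore supplies a complete, self-contained proof that the paper omits, and does so by the natural route of reducing to the monotonicity of a single auxiliary function.
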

We next show {\it a priori} energy estimates for the system \eqref{main_eq}.
\begin{lemma}\label{lem_energy} Let $(\rho,u,n,v)$ be any global classical solutions to  \eqref{main_eq}-\eqref{ini_main_eq}.
Then we have
\begin{align*}
\begin{aligned}
&(i) \,\text{Conservations of the masses and total momentum:}\cr
& \qquad \qquad \frac{d}{dt}\int_{\om} \rho \,dx = \frac{d}{dt}\int_{\om} n \,dx = 0 \quad \mbox{and} \quad \frac{d}{dt}\int_{\om} (\rho u + (n+1)v) \,dx = 0.\cr
&(ii)\,\text{Dissipation of the total energy:}\cr
&\qquad  \qquad \frac12\frac{d}{dt}E(t)+ \mu\int_{\om} |\nabla v|^2 dx + (\mu + \lambda)\int_{\om} |\nabla \cdot v|^2 dx +\int_{\om} \rho |u-v|^2 dx = 0.
\cr
&\quad \quad \text{where}\cr
&\qquad \qquad E(t) := \int_{\om} \rho|u|^2 dx +2 \int_\om \rho \ln \rho\,dx + \int_\om (n+1)|v|^2 dx + \frac{2}{\gamma-1}\int_{\om} (n+1)^\gamma dx.
\end{aligned}
\end{align*}
\end{lemma}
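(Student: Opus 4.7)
The plan is to verify both parts by standard a priori manipulation of \eqref{f_eq} (equivalently \eqref{main_eq}) using only integration by parts together with the two continuity equations. Because $(\rho,u,n,v)$ is a classical solution that either lives on $\T^3$ or decays at infinity to the reference state $(1,0,1,0)$, every boundary/far-field term produced by integration by parts vanishes, which legitimizes the computations throughout.

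For (i), I integrate $\partial_t\rho+\nabla\cdot(\rho u)=0$ and $\partial_t n+\nabla\cdot((n+1)v)=0$ over $\Omega$; the divergence terms drop out and give $\frac{d}{dt}\!\int\rho\,dx=\frac{d}{dt}\!\int n\,dx=0$. For the total momentum, I add the two momentum equations of \eqref{f_eq}: the drag terms $\mp\rho(u-v)$ cancel pointwise, and the convective fluxes $\nabla\cdot(\rho u\otimes u)$, $\nabla\cdot(nv\otimes v)$, the pressure gradients $\nabla\rho$, $\nabla p(n)$, and the Lamé term $Lv$ all integrate to zero. Integrating in $x$ therefore yields $\frac{d}{dt}\!\int(\rho u+(n+1)v)\,dx=0$.

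For (ii), I apply the standard energy method phase by phase. Multiplying the Euler momentum equation by $u$, combining with $\partial_t\rho+\nabla\cdot(\rho u)=0$, gives
\[
\tfrac12\tfrac{d}{dt}\!\int_\Omega\rho|u|^2\,dx+\int_\Omega u\cdot\nabla\rho\,dx=-\int_\Omega\rho\,u\cdot(u-v)\,dx.
\]
The potential term is then identified by a direct computation: $\frac{d}{dt}\!\int\rho\ln\rho\,dx=\int(\ln\rho+1)\partial_t\rho\,dx=\int u\cdot\nabla\rho\,dx$ after one integration by parts and using mass conservation to kill the constant $+1$ contribution (this is exactly the instance $P(\rho)=\rho\ln\rho$ of the Bogoliubov-type potential appearing in Lemma \ref{lem_press}). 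An entirely analogous computation for the Navier-Stokes phase, using $\partial_t(n+1)+\nabla\cdot((n+1)v)=0$, yields
\[
\tfrac12\tfrac{d}{dt}\!\int_\Omega(n+1)|v|^2\,dx+\int_\Omega v\cdot\nabla p(n+1)\,dx+\int_\Omega v\cdot Lv\,dx=\int_\Omega\rho\,v\cdot(u-v)\,dx;
\]
integrating the Lamé term by parts produces $\mu\!\int|\nabla v|^2+(\mu+\lambda)\!\int|\nabla\cdot v|^2$, and the same barotropic computation identifies the pressure term with $\frac{1}{\gamma-1}\frac{d}{dt}\!\int(n+1)^\gamma\,dx$ (invoking the second identity in Lemma \ref{lem_press}). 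Adding the two energy identities, the cross drag contributions on the right combine exactly into $-\int\rho|u-v|^2\,dx$, which is the stated identity.

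There is no real obstacle here; this lemma is essentially a priori bookkeeping. The only point requiring a word is justifying the vanishing of all boundary contributions when $\Omega=\R^3$, which is a consequence of the decay assumptions built into the classical solution class.
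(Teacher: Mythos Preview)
Your proof is correct and follows the same approach as the paper. The paper's own proof is in fact even terser than yours: it simply states that (i) follows from ``straightforward computations'' and that (ii) follows from the two identities
\[
\int_\Omega u\cdot\nabla\rho\,dx=\frac{d}{dt}\int_\Omega\rho\ln\rho\,dx
\quad\text{and}\quad
\int_\Omega v\cdot\nabla p(n+1)\,dx=\frac{1}{\gamma-1}\frac{d}{dt}\int_\Omega (n+1)^\gamma\,dx,
\]
which are exactly the barotropic potential identifications you spell out.
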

\begin{proof}Straightforward computations yield the estimates $(i)$. For the energy estimate $(ii)$, we use 
\[
\int_\om u \cdot \nabla \rho\,dx = \frac{d}{dt}\int_\om \rho \ln \rho\,dx \quad \mbox{and} \quad \int_\om v \cdot \nabla p(n+1)\,dx = \frac{1}{\gamma-1}\frac{d}{dt}\int_{\om} (n+1)^\gamma dx.
\]
This completes the proof.
\end{proof}

We notice from Lemma \ref{lem_energy} that the masses $\rho_c(t) = \int_\om \rho\,dx$ and $n_c(t) = \int_\om n\,dx$ are conserved in time, i.e., $\rho_c(t) = \rho_c(0)$ and $n_c(t) = n_c(0)$ for $t \geq 0$. Thus, for the sake of simplicity, we denote $\rho_c(0)$ and $n_c(0)$ by $\rho_c$ and $n_c$, respectively.

\begin{remark}\label{rmk:energy}1. Let $(\rho, u, n, v)$ be any global classical solutions to the system \eqref{main_eq}-\eqref{ini_main_eq}. Suppose $\rho \in [0,\bar\rho]$ and $n + 1 \in [0, \bar n + 1]$. Then it follows from Lemmas \ref{lem_press} and \ref{lem_energy} that

$$\begin{aligned}
&\frac12\frac{d}{dt}\lt(\int_\om \rho|u|^2 dx + 2\int_\om \rho \int_1^\rho \frac{s - 1}{s^2}ds dx + \int_\om (n+1)|v|^2 dx + 2\int_\om (n+1)\int_1^{n+1}\frac{s^\gamma - 1}{s^2}dsdx \rt)\cr
&\qquad + \mu\int_{\om} |\nabla v|^2 dx + (\mu + \lambda)\int_{\om} |\nabla \cdot v|^2 dx +\int_{\om} \rho |u-v|^2 dx = 0.
\end{aligned}$$
Then we deduce from Lemma \ref{lem_press} that

\begin{align*}
\begin{aligned}
&\int_\om \rho|u|^2 dx + \int_\om (\rho - 1)^2 dx + \int_\om n^2\,dx+ \int_\om(n+1)|v|^2 dx \cr
&\quad + \mu\int_0^t\int_\om |\nabla v|^2 dx ds + (\mu + \lambda)\int_0^t\int_\om |\nabla \cdot v|^2 dx ds+ \int_0^t \int_\om \rho|u-v|^2 dx ds\cr
&\quad \qquad \ls \int_\om \rho_0|u_0|^2 dx + \int_\om (\rho_0 - 1)^2 dx +\int_\om n_0^2\,dx + \int_\om (n_0 + 1)|v_0|^2 dx,
\end{aligned}
\end{align*}
and furthermore we also find 
\begin{align*}
\begin{aligned}
&\int_\om e^h|u|^2 dx + \int_\om h^2 dx + \int_\om n^2\,dx+ \int_\om(n+1)|v|^2 dx \cr
&\quad + \mu\int_0^t\int_\om |\nabla v|^2 dx ds + (\mu + \lambda)\int_0^t\int_\om |\nabla \cdot v|^2 dx ds+ \int_0^t \int_\om e^h|u-v|^2 dx ds\cr
&\quad \qquad \ls \int_\om e^{h_0}|u_0|^2 dx + \int_\om h_0^2 \,dx +\int_\om n_0^2\,dx + \int_\om (n_0 + 1)|v_0|^2 dx,
\end{aligned}
\end{align*}
for $h = \ln \rho \in L^\infty(\om \times \R_+)$, due to Lemma \ref{lem_eqv}. 

2. From Lemma \ref{lem_energy}, we obtain
\[
2\mu\int_0^\infty \int_\om |\nabla v|^2\,dx ds + 2\int_0^\infty \int_\om \rho|u-v|^2\,dx ds \leq E_0.
\]
Thus we asymptotically have
\[
\lim_{t\to\infty} \int_{t}^{t+1} \int_\om |\nabla v|^2\,dx ds = \lim_{t\to\infty}\int_t^{t+1} \int_\om \rho|u-v|^2\,dx ds = 0.
\]
\end{remark}
%
%
%
%
\section{Global existence and uniqueness of classical solutions}\label{sec:ap}
%
%
%
%
\subsection{Local existence}
In the theorem below, we provide local existence and uniqueness of strong solutions to the system \eqref{main_eq}. Since local existence theories for each equation have been well developed in $H^s$ Sobolev space as we mentioned in Section \ref{sec:intro}, we omit its detailed proof. For the readers who are interested in it, we refer to \cite{Majda} and references therein.

\begin{theorem}\label{thm:local-ext} Let $s > 5/2$, and suppose  $(h_0,u_0,n_0,v_0) \in H^{s}(\om) \times H^{s}(\om) \times H^{s}(\om) \times H^{s}(\om)$. Then, for any positive constants $\epsilon_0 < M_0$, there exists a positive constant $T_0$ depending only on $\epsilon_0$ and $M_0$ such that if $\|(h_0,u_0,n_0,v_0)\|_{H^s} \leq \epsilon_0$, then the system \eqref{main_eq}-\eqref{ini_main_eq} admits a unique solution $(h,u,n,v) \in \mathcal{I}_s(T_0)$ satisfying 
\[
\sup_{0 \leq t \leq T_0} \|(h,u,n,v)\|_{H^s} \leq M_0.
\]
\end{theorem}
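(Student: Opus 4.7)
The plan is to prove Theorem \ref{thm:local-ext} by a standard linearization-and-contraction scheme, viewing the system \eqref{main_eq} as the coupling of a symmetric hyperbolic block in $(h,u)$ with a parabolic-hyperbolic block in $(n,v)$ through the lower-order drag terms $-(u-v)$ and $e^h(u-v)$. First I would construct an iterative sequence $\{(h^{k},u^{k},n^{k},v^{k})\}_{k\ge 0}$ starting from the trivial guess $(h^0,u^0,n^0,v^0)\equiv 0$. Given the $k$-th iterate, I linearize the hyperbolic block as
\[
\pa_t h^{k+1}+u^k\cdot\nabla h^{k+1}+\nabla\cdot u^{k+1}=0,\qquad \pa_t u^{k+1}+u^k\cdot\nabla u^{k+1}+\nabla h^{k+1}=-(u^{k+1}-v^k),
\]
which is linear symmetric hyperbolic in $(h^{k+1},u^{k+1})$. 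The viscous block is linearized so that $L$ is the principal part of a uniformly parabolic equation for $v^{k+1}$, with $n^k+1$, $(n^k+1)^{\gamma-1}$, $u^k$, $v^k$, and $e^{h^k}$ treated as given coefficients, coupled to a linear transport-type equation $\pa_t n^{k+1}+\nabla\cdot((n^k+1)v^{k+1})=0$. Existence of each iterate then follows from classical linear theory (Friedrichs' method for the symmetric hyperbolic block and parabolic maximal $L^2$-regularity for the viscous equation). The positivity $n^k+1\geq 1/2$ needed to keep the nonlinear coefficients well-defined is inherited from smallness of $n_0$ and propagates for short times through the transport structure.

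Next, I would derive $H^s$ energy estimates that are uniform in $k$. Applying $\nabla^\alpha$ with $|\alpha|\le s$ to each linearized equation, testing against the appropriate variable, and invoking the Moser-type inequalities of Lemma \ref{lem:bern} together with the Sobolev embedding $H^s\hookrightarrow W^{1,\infty}$ for $s>5/2$, the symmetric structure of the hyperbolic block and integration by parts against $Lv^{k+1}$ in the viscous block yield an inequality of the form
\[
\frac{d}{dt}\me_s^{k+1}(t)+\mu\|\nabla v^{k+1}(t)\|_{H^s}^2\ls P\bigl(\me_s^{k}(t)^{1/2}\bigr)\bigl(1+\me_s^{k+1}(t)\bigr),
\]
for some polynomial $P$, where $\me_s^{k+1}(t):=\|(h^{k+1},u^{k+1},n^{k+1},v^{k+1})(t)\|_{H^s}^2$. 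A continuity argument then shows that if $\|(h_0,u_0,n_0,v_0)\|_{H^s}\leq\epsilon_0<M_0$ and the $k$-th iterate satisfies $\sup_{[0,T_0]}\me_s^{k}\leq M_0^2$, then the same bound holds for the $(k+1)$-th iterate provided $T_0=T_0(\epsilon_0,M_0)$ is chosen sufficiently small; by induction this gives a uniform bound on the whole sequence.

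Finally, to pass to the limit I would estimate the differences $\delta^{k+1}:=(h^{k+1}-h^k,u^{k+1}-u^k,n^{k+1}-n^k,v^{k+1}-v^k)$ in the lower-order norm $L^2(\om)$ by applying the same energy method to the linear equations satisfied by these differences. The uniform $H^s$ bound above controls every coefficient and remainder term, producing a contraction $\sup_{[0,T_1]}\|\delta^{k+1}\|_{L^2}^2\leq\tfrac{1}{2}\sup_{[0,T_1]}\|\delta^{k}\|_{L^2}^2$ for some possibly smaller $T_1\leq T_0$. The iterates are then Cauchy in $\mc([0,T_1];L^2)$, and interpolating against the uniform $H^s$ bound produces strong convergence in $\mc([0,T_1];H^{s'})$ for every $s'<s$, which is enough to pass to the limit in the nonlinearities; weak-$*$ lower semicontinuity places the limit in $\mathcal{I}_s(T_0)$, and the same $L^2$ estimate on the difference of two solutions yields uniqueness. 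I expect the main obstacle to be organizing the mixed-order couplings so that the hyperbolic block for $(h,u)$ does not lose derivatives to the parabolic block for $(n,v)$; the key observation is that the drag terms are \emph{symmetric} (they produce the dissipative pairing already recorded in Lemma \ref{lem_energy}), so at the $H^s$ level their commutators are of lower order and can be absorbed by $\mu\|\nabla v^{k+1}\|_{H^s}^2$ plus polynomial terms in $\me_s$, making the estimate self-contained in the spirit of \cite{Majda}.
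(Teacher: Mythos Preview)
Your proposal is correct and follows exactly the standard route the paper has in mind: the paper does not actually prove Theorem \ref{thm:local-ext} but simply states that ``local existence theories for each equation have been well developed in $H^s$ Sobolev space'' and refers the reader to \cite{Majda}, which is precisely the symmetric-hyperbolic/parabolic iteration-and-contraction scheme you outline. Your sketch is therefore more detailed than what the paper provides, and is fully consistent with its intended argument.
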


We next provide the equivalence relation between the classical solutions to the system \eqref{f_eq} and \eqref{main_eq}.

\begin{proposition}\label{prop:equiv} For any fixed $T>0$, if $(\rho,u,n,v)\in \mc^2(\om \times [0,T])$ solves the system \eqref{f_eq}-\eqref{ini_f_eq} with $\rho > 0$ and $n > 0$, then $(h,u,n,v) \in \mc^2(\om \times [0,T])$ solves the system \eqref{main_eq}-\eqref{ini_main_eq} with $e^h > 0$ and $n+1 > 0$. Conversely, if $(h,u,n,v) \in \mc^2(\om \times [0,T])$ solves the system \eqref{main_eq}-\eqref{ini_main_eq} with $e^h >0$ and $n+1 > 0$, then $(\rho,u,n,v) \in \mc^2(\om \times [0,T])$ solves the system \eqref{f_eq}-\eqref{ini_f_eq} with $\rho > 0$ and $n > 0$.
\end{proposition}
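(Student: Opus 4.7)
The proof is a direct algebraic verification that the nonlinear change of variables $\rho = e^h$, combined with the implicit relabelling that turns the density $n$ of \eqref{f_eq} into the perturbation $n-1$ appearing as ``$n$'' in \eqref{main_eq}, is a bijection between smooth solutions with positive densities. (The positivity conditions $\rho>0$, $n>0$ in the hypothesis of the first half are exactly equivalent, after relabelling, to $e^h>0$, $n+1>0$ in the conclusion, and conversely.) The plan has two symmetric halves, each of which is a pointwise manipulation on $\om \times [0,T]$.

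For the forward direction, I would set $h := \ln \rho$, which is $\mc^2$ whenever $\rho > 0$ is $\mc^2$, and note the identity $\nabla \rho/\rho = \nabla h$. The first two equations of \eqref{main_eq} then follow from \eqref{f_eq} in the standard way: divide $\eqref{f_eq}_1$ by $\rho$ to obtain $\pa_t h + u\cdot\nabla h + \nabla\cdot u = 0$; and, for the momentum, expand $\pa_t(\rho u) + \nabla\cdot(\rho u\otimes u) = \rho(\pa_t u + u\cdot\nabla u) + u(\pa_t\rho + \nabla\cdot(\rho u))$ via the product rule, kill the second group using the continuity equation, divide by $\rho$, and recognise $\nabla\rho/\rho = \nabla h$. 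For the Navier-Stokes subsystem I would set $\widetilde n := n - 1$, so $\widetilde n + 1 = n > 0$ and $\pa_t \widetilde n = \pa_t n$; then $\eqref{f_eq}_3$--$\eqref{f_eq}_4$ become $\eqref{main_eq}_3$--$\eqref{main_eq}_4$ verbatim once $\rho = e^h$ is substituted on the right. Initial data transform trivially under these substitutions.

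The converse direction simply reverses each step. Setting $\rho := e^h > 0$ (still $\mc^2$) and $\widetilde n := n + 1 > 0$, I would multiply $\eqref{main_eq}_1$ by $e^h$ and use $\pa_t e^h = e^h\pa_t h$ together with $u\cdot\nabla e^h = e^h\,u\cdot\nabla h$ to recover $\eqref{f_eq}_1$; multiply $\eqref{main_eq}_2$ by $\rho$, add $u$ times the just-obtained continuity equation, and use $\rho \nabla h = \nabla\rho$ to recover $\eqref{f_eq}_2$; and read off $\eqref{f_eq}_3$--$\eqref{f_eq}_4$ from $\eqref{main_eq}_3$--$\eqref{main_eq}_4$ after the relabelling.

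There is no real obstacle here, only bookkeeping. Both maps $\rho \mapsto \ln\rho$ on $\{\rho > 0\}$ and $h \mapsto e^h$ are $\mc^\infty$ on their domains, so $\mc^2$ regularity is preserved in both directions; the strict positivity hypotheses are used precisely to make the logarithm well-defined and to permit division by $\rho$ in the forward direction. The one point worth flagging explicitly in the write-up is the mild abuse of notation by which the symbol $n$ denotes the fluid density in \eqref{f_eq} but its perturbation from $1$ in \eqref{main_eq}; otherwise every identity is a one-line computation.
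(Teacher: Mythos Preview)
Your proposal is correct and matches the paper's approach: the paper simply declares the proof ``straightforward'' and your detailed algebraic verification is exactly the content behind that word. The only addition the paper makes is a brief remark that positivity of the densities on $[0,T]$ can itself be deduced from positivity of the initial data via the method of characteristics; this is not strictly required by the proposition as stated (which already hypothesizes positivity on the full interval), but you may wish to include it as a parenthetical observation.
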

\begin{proof}
The proof is straightforward, and the positivity of the densities is obtained from the corresponding positivity of the initial densities by using the method of characteristics. 
\end{proof}

%
%
%
%
\subsection{Global existence} In this part, we present the {\it a priori} estimates for the global existence of the classical solutions to the system \eqref{main_eq}. For this, we define
\[
\mh(T;s) := \sup_{0 \leq t \leq T}\|(h(t),u(t),n(t),v(t))\|_{H^s}^2 \quad \mbox{and} \quad \mh_0(s):= \|(h_0,u_0,n_0,v_0)\|_{H^s}^2.
\]
We first provide a uniform bound estimate of a zeroth-order of $\mh(T;s)$ in time.
\begin{lemma}\label{lem_sl2} 
Let $s>5/2$ and $T>0$ be given. Suppose that $\mh(T;s) \leq \epsilon_1$ for sufficiently small $\epsilon_1>0$. Then we obtain
\[
\mh(T;0) \leq C\mh_0(0),
\]
where $C>0$ is independent of $T$. 
\end{lemma}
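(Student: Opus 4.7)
The plan is to extract the zeroth-order bound directly from the weighted energy identity already proved in Remark \ref{rmk:energy}. After discarding the non-negative dissipation terms, that remark furnishes
\[
\int_\om e^h|u|^2\,dx + \int_\om h^2\,dx + \int_\om n^2\,dx + \int_\om(n+1)|v|^2\,dx \ls \int_\om e^{h_0}|u_0|^2\,dx + \int_\om h_0^2\,dx + \int_\om n_0^2\,dx + \int_\om(n_0+1)|v_0|^2\,dx,
\]
so it only remains to convert the weighted $L^2$ norms into ordinary $L^2$ norms, both pointwise in $t$ on the left and for the initial data on the right.

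To do this, I would appeal to the Sobolev embedding $H^s(\om) \hookrightarrow L^\infty(\om)$, which is available since $s > 5/2 > 3/2$. Under the hypothesis $\mh(T;s) \leq \epsilon_1$, this yields
\[
\sup_{0 \leq t \leq T}\bigl(\|h(t)\|_{L^\infty} + \|n(t)\|_{L^\infty}\bigr) \leq C\sqrt{\epsilon_1}.
\]
Choosing $\epsilon_1$ sufficiently small, I can then arrange that $1/2 \leq e^{h(t)} \leq 3/2$ and $1/2 \leq n(t)+1 \leq 3/2$ pointwise on $\om \times [0,T]$. Since $\mh_0(s) \leq \mh(T;s) \leq \epsilon_1$, the same pointwise bounds hold for $h_0$ and $n_0$.

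With these bounds in hand, the left-hand side of the energy inequality is bounded below by $\tfrac12\bigl(\|u(t)\|_{L^2}^2 + \|v(t)\|_{L^2}^2\bigr) + \|h(t)\|_{L^2}^2 + \|n(t)\|_{L^2}^2$, while the right-hand side is bounded above by $\tfrac32\bigl(\|u_0\|_{L^2}^2 + \|v_0\|_{L^2}^2\bigr) + \|h_0\|_{L^2}^2 + \|n_0\|_{L^2}^2 \ls \mh_0(0)$. Taking the supremum over $t \in [0,T]$ of the resulting inequality yields the claim with a constant $C$ independent of $T$.

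The main (and essentially only) delicate point is ensuring that the equivalence between the weighted and unweighted $L^2$ quantities holds with constants uniform in $t$; this is secured by propagating the $L^\infty$ smallness of $(h,n)$ from the assumed $H^s$ smallness via Sobolev embedding. The genuinely substantive work — converting the dissipative structure of \eqref{main_eq} into the weighted $L^2$ energy identity — has already been carried out in Lemmas \ref{lem_press}, \ref{lem_eqv}, and \ref{lem_energy}, so no further analysis of the PDEs themselves should be needed at this step.
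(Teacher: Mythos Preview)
Your proof is correct and follows essentially the same approach as the paper: both use the $H^s$ smallness together with Sobolev embedding to force $e^h$ and $n+1$ into $[1/2,3/2]$, then invoke the weighted energy estimate of Remark \ref{rmk:energy} and strip off the weights. The paper's proof is terser but the logic is identical.
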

\begin{proof}We choose $\epsilon_1>0$ small enough so that 
\[
\sup_{0 \leq t \leq T}\|e^{h(\cdot,t)} - 1\|_{L^\infty} \leq \frac12 \quad \mbox{and} \quad \sup_{0\leq t \leq T}\|n(\cdot,t)\|_{L^\infty} \leq \frac12.
\]
Then it follows from Remark \ref{rmk:energy} that
\begin{align*}
\begin{aligned}
&\int_\om |u|^2 dx + \int_\om h^2 dx + \int_\om n^2\,dx+ \int_\om |v|^2 dx \cr
&\quad + 2\mu\int_0^t\int_\om |\nabla v|^2 dx ds + 2(\mu + \lambda)\int_0^t\int_\om |\nabla \cdot v|^2 dx ds+ 2\int_0^t \int_\om e^h|u-v|^2 dx ds\cr
&\quad \qquad \ls \int_\om |u_0|^2 dx + \int_\om h_0^2 \,dx +\int_\om n_0^2\,dx + \int_\om |v_0|^2 dx.
\end{aligned}
\end{align*}
This concludes the desired result.
\end{proof}
For a higher order derivative, we first provide the estimate of $\|\nabla^k(h,u)\|_{L^2}$ for $1 \leq k \leq s$.
\begin{lemma}\label{lem_hu} 
Let $s>5/2$ and $T>0$ be given. Suppose that $\mh(T;s) \leq \epsilon_1$ for sufficiently small $\epsilon_1>0$. Then we obtain
\bq\label{est_hu}
\frac{d}{dt}\|\nabla^k (h,u)\|_{L^2}^2 + \frac32\|\nabla^k u\|_{L^2}^2 \leq C\epsilon_1\|\nabla^k (h,u)\|_{L^2}^2 + 2\|\nabla^k v\|_{L^2}^2 \quad \mbox{for} \quad 1 \leq k \leq s,
\eq
where $C>0$ is independent of $T$. 
\end{lemma}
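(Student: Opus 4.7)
The approach is a standard energy estimate in the spirit of Majda's treatment of symmetric hyperbolic systems, but applied only to the Euler-type pair $(h,u)$. The key structural point to exploit is that the linear principal part of $\eqref{main_eq}_1$--$\eqref{main_eq}_2$ is exactly the symmetric hyperbolic form: once we reformulate with $h=\ln\rho$, the couplings $\nabla\cdot u$ (in the $h$-equation) and $\nabla h$ (in the $u$-equation) are adjoints of each other and will cancel at the top order. The drag term $-(u-v)$ then provides a favourable sign that produces the required $\tfrac32\|\nabla^k u\|_{L^2}^2$ on the left and the allowable $2\|\nabla^k v\|_{L^2}^2$ forcing on the right via one application of Young's inequality.

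Concretely, I would apply $\nabla^k$ to $\eqref{main_eq}_1$ and $\eqref{main_eq}_2$, take $L^2$-inner product of the first with $\nabla^k h$ and of the second with $\nabla^k u$, and add them. Integration by parts yields
\[
\int_\om \nabla\cdot(\nabla^k u)\,\nabla^k h\,dx+\int_\om \nabla\nabla^k h\cdot\nabla^k u\,dx=0,
\]
eliminating the top-order coupling. This leaves three groups of terms: (a) the transport-type pieces $\int \nabla^k(u\cdot\nabla h)\nabla^k h$ and $\int \nabla^k(u\cdot\nabla u)\cdot\nabla^k u$; (b) a commutator between $\nabla\cdot$ and $\nabla^k$ on $u$ coming from $\nabla^k(\nabla h\cdot u)$; and (c) the damping/coupling contribution $-\|\nabla^k u\|_{L^2}^2+\int\nabla^k v\cdot\nabla^k u\,dx$.

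For (a) I would split, say, $\nabla^k(u\cdot\nabla h)=u\cdot\nabla\nabla^k h+[\nabla^k,u\cdot\nabla]h$. The first piece integrates by parts to $-\tfrac12\int(\nabla\cdot u)|\nabla^k h|^2\,dx$, which is controlled by $\|\nabla u\|_{L^\infty}\|\nabla^k h\|_{L^2}^2\lesssim\sqrt{\epsilon_1}\|\nabla^k h\|_{L^2}^2$ via Sobolev embedding since $s>5/2$. The commutator piece is handled by the second inequality of Lemma \ref{lem:bern}:
\[
\bigl\|[\nabla^k,u\cdot\nabla]h\bigr\|_{L^2}\lesssim \|\nabla u\|_{L^\infty}\|\nabla^k h\|_{L^2}+\|\nabla^k u\|_{L^2}\|\nabla h\|_{L^\infty},
\]
and the analogous estimate for $u\cdot\nabla u$. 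Pairing with $\nabla^k h$ or $\nabla^k u$ and using $\|(h,u)\|_{W^{1,\infty}}\lesssim\|(h,u)\|_{H^s}\lesssim\sqrt{\epsilon_1}$ gives a total bound $C\epsilon_1\|\nabla^k(h,u)\|_{L^2}^2$ on (a)+(b). For (c) I would write $2\int\nabla^k v\cdot\nabla^k u\le\tfrac12\|\nabla^k u\|_{L^2}^2+2\|\nabla^k v\|_{L^2}^2$, so the damping contributes $-2\|\nabla^k u\|_{L^2}^2+\tfrac12\|\nabla^k u\|_{L^2}^2+2\|\nabla^k v\|_{L^2}^2$, i.e., exactly $-\tfrac32\|\nabla^k u\|_{L^2}^2+2\|\nabla^k v\|_{L^2}^2$ after multiplying the summed identity by $2$.

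The only real obstacle is verifying that no term requires more than $k$ derivatives on $(h,u)$: the top-order $\nabla\nabla^k h$ and $\nabla\cdot\nabla^k u$ pieces must cancel exactly, and the remaining commutators must be estimated by $\|\nabla^k(h,u)\|_{L^2}$ times an $L^\infty$-type quantity that the smallness assumption $\mh(T;s)\le\epsilon_1$ makes small. This is where the Moser-type Lemma \ref{lem:bern}(i), combined with $H^s\hookrightarrow W^{1,\infty}$ for $s>5/2$, is essential. Once these pieces are assembled the bound \eqref{est_hu} follows directly.
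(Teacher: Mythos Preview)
Your proposal is correct and follows essentially the same route as the paper: apply $\nabla^k$ to $\eqref{main_eq}_1$--$\eqref{main_eq}_2$, exploit the antisymmetry to cancel the top-order cross terms $\nabla^k(\nabla\cdot u)$ and $\nabla^{k+1}h$, bound the transport and commutator pieces by $C\sqrt{\epsilon_1}\|\nabla^k(h,u)\|_{L^2}^2$ via Lemma~\ref{lem:bern}(i) together with $H^s\hookrightarrow W^{1,\infty}$, and split the damping term with Young's inequality to produce $-\tfrac32\|\nabla^k u\|_{L^2}^2+2\|\nabla^k v\|_{L^2}^2$ after multiplying by $2$. The only cosmetic difference is that the paper records the small factor as $C\epsilon_1$ rather than $C\sqrt{\epsilon_1}$, which is immaterial for the argument.
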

\begin{proof}
For $1 \leq k \leq s+1$, it follows from \eqref{main_eq} that
\begin{align*}
\begin{aligned}
&\frac12\frac{d}{dt}\int_{\om} |\nabla^k h|^2 + |\nabla^k u|^2 dx \cr
&\quad = - \int_{\om} \nabla^k h \cdot \lt( \nabla^k(\nabla h \cdot u) + \nabla^k(\nabla \cdot u) \rt) dx - \int_{\om} \nabla^k u \cdot \lt( \nabla^k(u \cdot \nabla u) + \nabla^{k+1}h \rt) dx\cr
&\qquad - \int_{\om} \nabla^k u \cdot \lt(\nabla^k u - \nabla^k v \rt) dx\cr
&\quad = \frac12\int_{\om} (\nabla \cdot u)(|\nabla^k h|^2 + |\nabla^k u|^2)dx - \int_{\om} \nabla^k h \cdot [\nabla^k, u \cdot \nabla ]h\, dx\cr
&\qquad - \int_{\om} \nabla^k u \cdot [\nabla^k,u\cdot \nabla]u \,dx - \int_{\om} \nabla^k u \cdot \lt(\nabla^k u - \nabla^k v \rt) dx\cr
&\quad =: \sum_{i=1}^4 I_i,
\end{aligned}
\end{align*}
where $[\cdot,\cdot]$ denotes the commutator operator, i.e., $[A,B] = AB - BA$. Then we now estimate $I_i,i=1,\cdots,4$ as follows.
\begin{align*}
\begin{aligned}
I_1 &\leq \frac12\|\nabla u\|_{L^\infty}(\|\nabla^k h\|_{L^2}^2+ \|\nabla^k u\|_{L^2}^2) \ls \|\nabla (h,u)\|_{L^\infty}\|\nabla^k (h,u)\|_{L^2}^2 \leq C\epsilon_1\|\nabla^k (h,u)\|_{L^2}^2,\cr
I_2 &\ls \|\nabla^k h\|_{L^2}(\|\nabla^k u\|_{L^2}\|\nabla h\|_{L^\infty} + \|\nabla^k h\|_{L^2}\|\nabla u\|_{L^\infty} )\ls \|\nabla (h,u)\|_{L^\infty}\|\nabla^k (h,u)\|_{L^2}^2\cr
&\leq C\epsilon_1\|\nabla^k (h,u)\|_{L^2}^2,\cr
I_3 & \ls \|\nabla^k u\|_{L^2}^2\|\nabla u\|_{L^\infty}\ls \|\nabla (h,u)\|_{L^\infty}\|\nabla^k (h,u)\|_{L^2}^2\leq C\epsilon_1\|\nabla^k (h,u)\|_{L^2}^2,\cr
I_4 &\leq - \frac34\|\nabla^k u\|_{L^2}^2 + \|\nabla^k v\|_{L^2}^2,
\end{aligned}
\end{align*}
due to Lemma \ref{lem:bern}. This yields 
\[
\frac{d}{dt}\|\nabla^k (h,u)\|_{L^2}^2 + \frac32\|\nabla^k u\|_{L^2}^2 \leq C\epsilon_1\|\nabla^k (h,u)\|_{L^2}^2 + 2\|\nabla^k v\|_{L^2}^2,
\]
where $C>0$ is independent of $T$. 
\end{proof}
\begin{remark}\label{rmk_u}In a similar fashion as in Lemma \ref{lem_hu}, we also deduce that for $1 \leq k \leq s$
$$\begin{aligned}
\frac12\frac{d}{dt}\int_\om |\nabla^{k-1}u|^2\,dx &= \int_\om \nabla^{k-1}u \cdot \nabla^{k-1}u_t\,dx\cr
&\leq \int_\om \nabla^{k-1}u\cdot\lt( - \nabla^{k-1}(u \cdot \nabla u) - \nabla^k h - \nabla^{k-1}(u-v)\rt)dx\cr
&\leq C\epsilon_1 \|\nabla^{k-1}u\|_{L^2}^2 + \|\nabla^{k-1}u\|_{L^2}\|\nabla^k h\|_{L^2} - \frac12\|\nabla^{k-1}u\|_{L^2}^2 + \frac12\|\nabla^{k-1}v\|_{L^2}^2\cr
&\leq \frac14\|\nabla^k h\|_{L^2}^2 + C\|\nabla^{k-1}(u,v)\|_{L^2}^2,
\end{aligned}$$
where $C>0$ is independent of $T$. 
\end{remark}
We notice that we do not have the dissipation rate of $h$ in the estimate \eqref{est_hu}. In order to get it, the estimate of $\|h_t\|_{H^{s-1}}$ is used for the compressible Euler equation with damping \cite{PZ,STW} since the required dissipation rate of $h$ can be bounded from above by Sobolev norms of other solutions under the smallness assumption on the solutions. Indeed, it follows from the part of Euler equations $\eqref{main_eq}_1$-$\eqref{main_eq}_2$ that
$$\begin{aligned}
h_t &= -\nabla h \cdot u  - \nabla \cdot u,\cr
\nabla h &= - u_t - u \cdot \nabla u - (u - v).
\end{aligned}$$
Then, for $0 \leq k \leq s-1$, we get
$$\begin{aligned}
\|\nabla^k h_t \|_{L^2} &\leq \|\nabla^k(\nabla h \cdot u)\|_{L^2} + \|\nabla^{k+1} u\|_{L^2}\cr
&\ls \|\nabla h\|_{H^{s-1}}\|u\|_{L^\infty} + \|\nabla h\|_{L^\infty} \|u\|_{H^{s-1}} + \|\nabla u\|_{H^{s-1}}\cr
&\ls \|u\|_{H^s}.
\end{aligned}$$
Similarly, we find 
\[
\|\nabla h\|_{H^{s-1}} \ls \|u_t\|_{H^{s-1}} + \|u\|_{H^{s-1}} + \|v\|_{H^{s-1}}.
\]
Thus we obtain
\bq\label{s_1}
\|h_t\|_{H^{s-1}} + \|\nabla h\|_{H^{s-1}} \ls \|u\|_{H^s} + \|u_t\|_{H^{s-1}} + \|v\|_{H^{s-1}}.
\eq
When there is no interactions with fluids, i.e., $v = 0$ in \eqref{s_1} and the system $\eqref{main_eq}_1$-$\eqref{main_eq}_2$ reduces to the Euler equations with linear damping, the above observation enables us to have the uniform bound estimates of solutions. This strategy is taken into consideration for the isothermal Euler/incompressible Navier-Stokes equations in \cite{Choi1}. However, for our system \eqref{main_eq}, it will be very complicated if we estimate $\|(h_t,u_t,n_t,v_t)\|_{H^{s-1}}$ to get the dissipation rate for the densities $h$ and $n$. To this end, we give the following lemma inspired by \cite{MN1, MN2} which gives the dissipation rate for the density $h$.
\begin{lemma}\label{lem_h}
Let $s>5/2$ and $T>0$ be given. Suppose that $\mh(T;s) \leq \epsilon_1$ for sufficiently small $\epsilon_1>0$. Then there exists a $C > 0$ independent of $T$ such that 
$$
\begin{aligned}
&\frac{d}{dt}\int_\om \nabla^{k-1}u \cdot \nabla^k h\,dx + \frac34\|\nabla^k h\|_{L^2}^2  \cr
&\quad \leq C\epsilon_1\|\nabla^k(h,u)\|_{L^2}^2 + C\epsilon_1\|\nabla^{k-1}u\|_{L^2}^2 + \|\nabla^k u\|_{L^2}^2+ C\lt( \|\nabla^{k-1}u\|_{L^2}^2 + \|\nabla^{k-1}v\|_{L^2}^2\rt),
\end{aligned}
$$
for $1 \leq k \leq s$. 
\end{lemma}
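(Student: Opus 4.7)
The plan is to apply a Matsumura--Nishida type multiplier argument in order to extract the missing dissipation of $h$ directly from the momentum equation. Rewriting $\eqref{main_eq}_2$ as $\nabla h = -u_t - u\cdot\nabla u - (u-v)$ suggests testing against $\nabla\pa^\alpha h$ for $|\alpha|=k-1$. Concretely, I would fix $\alpha$ with $|\alpha|=k-1$, apply $\pa^\alpha$ to $\eqref{main_eq}_2$, take the dot product with $\nabla\pa^\alpha h$, integrate over $\om$, and sum over $\alpha$ to obtain
\begin{equation*}
\|\nabla^k h\|_{L^2}^2 = -\sum_{|\alpha|=k-1}\int_\om \bigl(\pa^\alpha u_t + \pa^\alpha(u\cdot\nabla u) + \pa^\alpha(u-v)\bigr)\cdot\nabla\pa^\alpha h\,dx.
\end{equation*}

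Next, I would rewrite the time-derivative contribution so that it produces the quantity $\frac{d}{dt}\int_\om \nabla^{k-1}u\cdot\nabla^k h\,dx$ appearing in the statement. Writing
\begin{equation*}
\int_\om \pa^\alpha u_t\cdot\nabla\pa^\alpha h\,dx = \frac{d}{dt}\int_\om \pa^\alpha u\cdot\nabla\pa^\alpha h\,dx - \int_\om \pa^\alpha u\cdot\nabla\pa^\alpha h_t\,dx,
\end{equation*}
and integrating the last integral by parts in $x$ yields $\int_\om (\nabla\cdot\pa^\alpha u)\,\pa^\alpha h_t\,dx$. Substituting the continuity equation $h_t = -u\cdot\nabla h - \nabla\cdot u$ produces a linear piece $-\|\pa^\alpha(\nabla\cdot u)\|_{L^2}^2$, which is controlled by $\|\nabla^k u\|_{L^2}^2$, and a nonlinear piece $\int_\om \pa^\alpha(\nabla\cdot u)\,\pa^\alpha(u\cdot\nabla h)\,dx$. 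The Moser inequality in Lemma~\ref{lem:bern}, combined with the Sobolev embedding $H^s\hookrightarrow L^\infty$ for $s>5/2$ (so that $\|u\|_{L^\infty}+\|\nabla h\|_{L^\infty}\ls\sqrt{\epsilon_1}$), gives
\begin{equation*}
\|\nabla^{k-1}(u\cdot\nabla h)\|_{L^2} \ls \sqrt{\epsilon_1}\,\bigl(\|\nabla^k h\|_{L^2}+\|\nabla^{k-1}u\|_{L^2}\bigr),
\end{equation*}
so Young's inequality bounds this nonlinear piece by $C\epsilon_1(\|\nabla^k h\|_{L^2}^2+\|\nabla^{k-1}u\|_{L^2}^2)+\tfrac14\|\nabla^k u\|_{L^2}^2$.

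The two remaining integrals are controlled in the same spirit. Lemma~\ref{lem:bern} and Young's inequality yield
\begin{equation*}
\Bigl|\int_\om \pa^\alpha(u\cdot\nabla u)\cdot\nabla\pa^\alpha h\,dx\Bigr| \leq \tfrac18\|\nabla^k h\|_{L^2}^2 + C\epsilon_1\|\nabla^k u\|_{L^2}^2,
\end{equation*}
and the drag contribution is bounded, again by Young's, by $\tfrac18\|\nabla^k h\|_{L^2}^2 + C(\|\nabla^{k-1}u\|_{L^2}^2+\|\nabla^{k-1}v\|_{L^2}^2)$. Collecting all contributions, absorbing the accumulated $\tfrac14+\tfrac18+\tfrac18=\tfrac12$ worth of $\|\nabla^k h\|_{L^2}^2$ into the left-hand side, and summing over $|\alpha|=k-1$ leaves the stated inequality with prefactor $\tfrac34$.

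The only delicate bookkeeping point is the time-derivative term: once $\frac{d}{dt}$ is pulled outside, the continuity equation forces one to trade $\pa_t h$ for $\nabla\cdot u$, which unavoidably leaves a full-strength $\|\nabla^k u\|_{L^2}^2$ (without a small prefactor) on the right-hand side. This is precisely why the lemma retains $\|\nabla^k u\|_{L^2}^2$ unsmoothed on its right-hand side; in the global-in-time closure this term must eventually be absorbed through Lemma~\ref{lem_hu}, where the viscous coupling with $v$ provides genuine dissipation for $u$. Apart from this accounting, every remaining estimate reduces to a routine Moser/Sobolev manipulation under the smallness assumption $\mh(T;s)\leq\epsilon_1$.
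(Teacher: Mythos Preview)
Your argument is essentially identical to the paper's: both compute $\frac{d}{dt}\int_\om \nabla^{k-1}u\cdot\nabla^k h\,dx$ by substituting $u_t$ from the momentum equation and $h_t$ from the continuity equation (after one integration by parts), then close with the Moser inequalities of Lemma~\ref{lem:bern}. Your bookkeeping has a small slip---the losses of $\|\nabla^k h\|_{L^2}^2$ you list total $\tfrac18+\tfrac18=\tfrac14$ (the $\tfrac14$ you cite sits on $\|\nabla^k u\|_{L^2}^2$, not on $h$), and the $u\cdot\nabla u$ bound should also carry a $C\epsilon_1\|\nabla^{k-1}u\|_{L^2}^2$ term---but these are harmless and the stated $\tfrac34$ coefficient follows.
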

\begin{proof}For $1 \leq k \leq s$, it is a straightforward to get
$$\begin{aligned}
\frac{d}{dt}\int_\om \nabla^{k-1}u \cdot \nabla^k h\,dx &=\int_\om \nabla^{k-1} u_t \cdot \nabla^k h\,dx + \int_\om \nabla^{k-1}u \cdot \nabla^k h_t\,dx\cr
&= J_1 + J_2,
\end{aligned}$$
where $J_1$ is estimated as follows.
$$\begin{aligned}
J_1 &= -\int_\om \nabla^k h \cdot \nabla^{k-1}\lt( u \cdot \nabla u + \nabla h + (u-v)\rt)dx\cr
&\leq \|\nabla^k h\|_{L^2}\lt( \|\nabla^{k-1} u\|_{L^2}\|\nabla u\|_{L^\infty} + \|u\|_{L^\infty}\|\nabla^k u\|_{L^2}\rt) - \|\nabla^k h\|_{L^2}^2\cr
&\quad + \|\nabla^k h\|_{L^2}\lt(\|\nabla^{k-1}u\|_{L^2} + \|\nabla^{k-1}v\|_{L^2} \rt)\cr
&\leq C\epsilon_1\|\nabla^k(h,u)\|_{L^2}^2 + C\epsilon_1\|\nabla^{k-1}u\|_{L^2}^2 - \frac34 \|\nabla^k h\|_{L^2}^2 + C\lt( \|\nabla^{k-1}u\|_{L^2}^2 + \|\nabla^{k-1}v\|_{L^2}^2\rt).
\end{aligned}$$
For the estimate of $J_2$, we use the integration by parts together with the continuity equation $\eqref{main_eq}_1$ to find
$$\begin{aligned}
J_2 &= -\int_\om \nabla^k u \cdot \nabla^{k-1}h_t\,dx \cr
&= \int_\om \nabla^k u \cdot \lt( \nabla^{k-1}(\nabla h \cdot u + \nabla \cdot u)\rt)dx\cr
&\leq \|\nabla^k u\|_{L^2}\lt( \|\nabla h\|_{L^\infty}\|\nabla^{k-1}u\|_{L^2} + \|\nabla^k h\|_{L^2}\|u\|_{L^\infty}\rt) +\|\nabla^k u\|_{L^2}^2 \cr
&\leq C\epsilon_1\|\nabla^k(h,u)\|_{L^2}^2 + C\epsilon_1\|\nabla^{k-1}u\|_{L^2}^2 + \|\nabla^k u\|_{L^2}^2.
\end{aligned}$$
Thus we have
$$\begin{aligned}
&\frac{d}{dt}\int_\om \nabla^{k-1}u \cdot \nabla^k h\,dx + \frac34\|\nabla^k h\|_{L^2}^2  \cr
&\quad \leq C\epsilon_1\|\nabla^k(h,u)\|_{L^2}^2 + C\epsilon_1\|\nabla^{k-1}u\|_{L^2}^2 + \|\nabla^k u\|_{L^2}^2+ C\lt( \|\nabla^{k-1}u\|_{L^2}^2 + \|\nabla^{k-1}v\|_{L^2}^2\rt).
\end{aligned}$$
This completes the proof.
\end{proof}
\begin{remark}\label{rmk_eqv}Note that
\[
\int_\om |\nabla^k (h,u)|^2\,dx + \int_\om \nabla^{k-1}u \cdot \nabla^k h \,dx+ \int_\om |\nabla^{k-1} u|^2\,dx \approx \|\nabla^k h\|_{L^2}^2 + \|\nabla^{k-1}u\|_{H^1}^2,
\]
i.e., there exists a positive constant $C$ such that
$$\begin{aligned}
&\frac1C\lt( \|\nabla^k h\|_{L^2}^2 + \|\nabla^{k-1}u\|_{H^1}^2\rt)\cr
& \leq\int_\om |\nabla^k (h,u)|^2 \,dx + \int_\om \nabla^{k-1}u \cdot \nabla^k h \,dx+ \int_\om |\nabla^{k-1} u|^2\,dx \leq C\lt( \|\nabla^k h\|_{L^2}^2 + \|\nabla^{k-1}u\|_{H^1}^2\rt).
\end{aligned}$$
\end{remark}
We next give the estimate of $\|\nabla^k v\|_{L^2}$ for $1 \leq k \leq s$. The similar framework is employed in \cite{CK}. However, it can not be directly applied here, since the periodic domain is considered in \cite{CK}, and in addition Poincar\'e inequality is also used for the estimate.
\begin{lemma}\label{lem_v}
Let $s>5/2$ and $T>0$ be given. Suppose that $\mh(T;s) \leq \epsilon_1$ for sufficiently small $\epsilon_1>0$. Then we have
$$\begin{aligned}
&\frac{d}{dt}\|\nabla^k v\|_{L^2}^2 + \frac{1}{3}\|\nabla^k v\|_{L^2}^2 + C_1\mu\|\nabla^{k+1}v\|_{L^2}^2\cr
&\quad \leq C\epsilon_1\|\nabla^k (h,n,v)\|_{L^2}^2 +3\|\nabla^k u\|_{L^2}^2 +C\|\nabla n\|_{H^{k-1}}^2 \quad \mbox{for} \quad 1 \leq k \leq s,
\end{aligned}
$$
for some positive constant $C_1 > 0$. Here $C>0$ is independent of $T$. 
\end{lemma}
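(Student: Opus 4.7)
The plan is to derive the inequality from a weighted energy estimate for $v$ at the $\nabla^k$-level. First, I would use the continuity equation to cancel $\pa_t n$ inside $\pa_t((n+1)v)+\nabla\cdot((n+1)v\otimes v)$ and rewrite the momentum equation in non-conservative form,
\[
(n+1)\bigl(\pa_t v+v\cdot\nabla v\bigr)+\nabla p(n+1)+Lv=e^h(u-v).
\]
Since Sobolev embedding gives $\|n\|_{L^\infty}+\|h\|_{L^\infty}\lesssim\epsilon_1^{1/2}$, the factor $n+1$ stays bounded below by $1/2$, so I may divide through to obtain
\[
\pa_t v+v\cdot\nabla v+\phi\,\nabla p(n+1)+\phi Lv=\phi e^h(u-v),\quad \phi:=(n+1)^{-1}=1+O(n).
\]
Applying $\nabla^k$ and pairing with $\nabla^k v$ in $L^2$ then produces an identity of the form $\tfrac12\tfrac{d}{dt}\|\nabla^k v\|_{L^2}^2+T+P+V=D$, where $T,P,V,D$ denote the transport, pressure, viscous, and drag contributions.

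I would then bound each piece. The transport $T$ reduces, via integration by parts and the commutator form of Lemma~\ref{lem:bern}, to $O(\|\nabla v\|_{L^\infty}\|\nabla^k v\|_{L^2}^2)=O(\epsilon_1\|\nabla^k v\|_{L^2}^2)$. For the pressure, write $\phi\,\nabla p(n+1)=\gamma\nabla n+O(n)\nabla n$: integrating by parts the principal piece gives $|P|\le\delta\|\nabla^{k+1}v\|_{L^2}^2+C\|\nabla n\|_{H^{k-1}}^2+C\epsilon_1\|\nabla^k n\|_{L^2}^2$, where the inequality $\|\nabla^k n\|_{L^2}\le\|\nabla n\|_{H^{k-1}}$ for $k\ge 1$ produces exactly the forcing term in the lemma. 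For the viscous contribution, splitting $\phi=1+(\phi-1)$ isolates the clean dissipation $\int\nabla^k Lv\cdot\nabla^k v=\mu\|\nabla^{k+1}v\|_{L^2}^2+(\mu+\lambda)\|\nabla^k(\nabla\cdot v)\|_{L^2}^2$; the remainder is treated as described below. Finally, writing $\phi e^h=1+r$ with $\|r\|_{L^\infty}\lesssim\epsilon_1^{1/2}$, the drag principal part yields $\int(\nabla^k u-\nabla^k v)\cdot\nabla^k v$, to which Young's inequality, with the constant tuned so that the $\|\nabla^k u\|_{L^2}^2$-coefficient lands below $3$, gives a dissipation $-c\|\nabla^k v\|_{L^2}^2$ with $c>1/3$ and the forcing $\le 3\|\nabla^k u\|_{L^2}^2$; the $r$-correction contributes only $C\epsilon_1\|\nabla^k(h,n,u,v)\|_{L^2}^2$, and $C\epsilon_1\|\nabla^k u\|_{L^2}^2$ is absorbed into $3\|\nabla^k u\|_{L^2}^2$. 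Collecting all pieces and multiplying by $2$ yields the stated bound with some $C_1>0$.

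The main technical obstacle is the perturbative viscous integral $\int\nabla^k\bigl((\phi-1)Lv\bigr)\cdot\nabla^k v\,dx$: a naive Moser bound on the product $(\phi-1)Lv$ would require either $\|\Delta v\|_{L^\infty}$ or $\|\nabla^{k+1}\phi\|_{L^2}\sim\|\nabla^{k+1}n\|_{L^2}$, neither of which is available for $s>5/2$ and $k=s$. The fix is to use the identity $\phi\Delta v=\Delta(\phi v)-2\nabla\phi\cdot\nabla v-\Delta\phi\cdot v$, or, equivalently, to integrate by parts once before invoking any product rule, so that the principal $\nabla^{k+1}v$ emerges weighted by $\phi$ and the remaining commutators are of the form $\int\nabla^{k-1}(\cdots)\cdot\nabla^{k+1}v\,dx$. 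Then Lemma~\ref{lem:bern}(i), together with the smallness $\|\phi-1\|_{L^\infty}+\|\nabla\phi\|_{L^\infty}\lesssim\epsilon_1^{1/2}$ and the bound $\|\nabla^k\phi\|_{L^2}\lesssim\|\nabla^k n\|_{L^2}$ from Lemma~\ref{lem:bern}(ii), controls every error by a small fraction of $\mu\|\nabla^{k+1}v\|_{L^2}^2$ plus $C\epsilon_1\|\nabla^k(h,n,v)\|_{L^2}^2$, which can be absorbed. A parallel maneuver handles the nonlinear pressure remainder $O(n)\nabla n$, and the rest is bookkeeping of constants.
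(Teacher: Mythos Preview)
Your proposal is correct and follows essentially the same route as the paper: pass to the non-conservative form $v_t+v\cdot\nabla v+\phi\,\nabla p(n+1)+\phi Lv=\phi e^h(u-v)$, apply $\nabla^k$, pair with $\nabla^k v$, and estimate the transport, pressure, viscous, and drag contributions separately. The drag and pressure treatments differ only cosmetically: the paper keeps the full weight $e^h/(n+1)$ in the principal drag part and uses the crude bounds $1/3\le e^h/(n+1)\le 3$ to extract $\tfrac32\|\nabla^k u\|_{L^2}^2-\tfrac16\|\nabla^k v\|_{L^2}^2$, whereas you split $\phi e^h=1+r$ first; and for the pressure the paper integrates by parts to pair $\nabla^{k-1}(\gamma(n+1)^{\gamma-2}\nabla n)$ with $\nabla^{k+1}v$ and expands by Leibniz, which is your $\gamma\nabla n+O(n)\nabla n$ decomposition in different clothing.

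The one genuine technical difference is in the viscous commutator. The paper does \emph{not} split $\phi=1+(\phi-1)$; instead it keeps $\phi$ in the principal term (obtaining $-\mu\int\phi|\nabla^{k+1}v|^2\,dx$ after one integration by parts) and handles the endpoint Leibniz term $\int\nabla^k v\cdot\nabla^k\phi\,\Delta v\,dx$ by an $L^4$--$L^4$--$L^2$ H\"older split together with the Sobolev embedding $H^1\hookrightarrow L^4$, so that the would-be $\|\Delta v\|_{L^\infty}$ is replaced by $\|\nabla^2 v\|_{H^1}\lesssim\epsilon_1^{1/2}$. Your alternative---writing $(\phi-1)\Delta v=\nabla\!\cdot((\phi-1)\nabla v)-\nabla\phi\cdot\nabla v$ and integrating by parts before applying Moser---also works and stays entirely within the $L^2$--$L^\infty$ framework of Lemma~\ref{lem:bern}(i), which is arguably cleaner. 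Either device avoids both $\|\Delta v\|_{L^\infty}$ and $\|\nabla^{k+1}n\|_{L^2}$, so the two approaches buy the same conclusion.
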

\begin{proof}
It follows from the Navier-Stokes equations $\eqref{main_eq}_4$ that 
\[
v_t + v \cdot \nabla v + \frac{\nabla p(n+1)}{n+1} + \frac{Lv}{n+1} = \frac{e^h}{n+1}(u-v).
\]
Then, for $1 \leq k \leq s$, by applying $\nabla^k$ to the above equation we find
\[
\nabla^{k} v_t + \nabla^{k}(v\cdot \nabla v) + \nabla^{k}\lt(\frac{\nabla p(n+1)}{n+1}\rt) + \nabla^{k}\lt(\frac{Lv}{n+1}\rt) = \nabla^{k}\lt(\frac{e^h}{n+1}(u-v)\rt),
\]
and it can be rewritten as 
\begin{align*}
\begin{aligned}
\nabla^{k} v_t &= - v \cdot \nabla^{k+1} v - [\nabla^{k},v\cdot \nabla]v + \nabla^{k}\lt(\frac{e^h}{n+1}(u-v)\rt)  - \nabla^{k} \lt(\frac{\nabla p(n+1)}{n+1}\rt) - \nabla^{k} \lt(\frac{Lv}{n+1}\rt).
\end{aligned}
\end{align*}
This yields that
\begin{align*}
\begin{aligned}
\frac12\frac{d}{dt}\|\nabla^{k} v\|_{L^2}^2 
&= -\int_\om (v\cdot \nabla^{k+1}v) \cdot \nabla^{k} v +  [\nabla^{k},v\cdot \nabla]v \cdot \nabla^{k} v - \nabla^{k}\lt(\frac{e^h}{n+1}(u-v)\rt) \cdot \nabla^{k} v \,dx\cr
&-\int_\om \nabla^{k} \lt(\frac{\nabla p(n+1)}{n+1}\rt) \cdot \nabla^{k} v + \nabla^{k} \lt(\frac{Lv}{n+1}\rt) \cdot \nabla^{k} v\,dx\cr
&=: \sum_{i=1}^5 K_i.
\end{aligned}
\end{align*}
We first easily estimate $K_1$ and $K_2$ as 
$$\begin{aligned}
K_1 &= \frac12\int_\om (\nabla \cdot v)|\nabla^k v|^2\,dx \leq C \epsilon_1 \|\nabla^k v\|_{L^2}^2,\cr
K_2 &\leq \|[\nabla^{k}, v \cdot \nabla]v\|_{L^2}\|\nabla^{k} v\|_{L^2} \leq C \epsilon_1\|\nabla^{k} v\|_{L^2}^2,
\end{aligned}$$
due to Lemma \ref{lem:bern}. 

For the $K_3$, we divide it into two parts:
$$\begin{aligned}
K_3 &= \int_\om \nabla^k v\cdot \frac{e^h}{n+1}\nabla^k(u-v)\,dx + \int_\om \nabla^k v \cdot\lt(\nabla^k \lt( \frac{e^h}{n+1}(u-v)\rt) - \frac{e^h}{n+1}\nabla^k (u-v) \rt)\cr
&=: K_3^1 + K_3^2.
\end{aligned}$$
By the smallness assumptions on the solutions, we can obtain $1/2 \leq e^{h(x,t)} \leq 3/2$ and $|n(x,t)|\leq 1/2$ for all $(x,t) \in \om \times [0,T]$, and this deduces
$$\begin{aligned}
K_3^1 &=\int_\om \frac{e^h}{n+1} \nabla^k u \cdot \nabla^k v\,dx - \int_\om \frac{e^h}{n+1} |\nabla^k v|^2\,dx\cr
&\leq \frac12\int_\om \frac{e^h}{n+1}|\nabla^k u|^2\,dx - \frac12\int_\om \frac{e^h}{n+1}|\nabla^k v|^2\,dx\cr
&\leq \frac32\int_\om |\nabla^k u|^2\,dx - \frac16\int_\om |\nabla^k v|^2\,dx,
\end{aligned}$$
due to $1/3 \leq e^h/(n+1) \leq 3$ for all $(x,t) \in \om \times [0,T]$.

We also obtain 
$$\begin{aligned}
K_3^2 &\leq \|\nabla^k v\|_{L^2}\lt(\lt\|\nabla \lt(\frac{e^h}{n+1}\rt) \rt\|_{L^\infty}\|\nabla^{k-1}(u-v)\|_{L^2} + \lt\|\nabla^k \lt(\frac{e^h}{n+1} \rt)\rt\|_{L^2} \|u - v\|_{L^\infty} \rt)\cr
&\leq \|\nabla^k v\|_{L^2}\lt( \|\nabla(h,n)\|_{L^\infty}\|\nabla^{k-1}(u-v)\|_{L^2} + \|\nabla^k (h,n)\|_{L^2}\|u - v\|_{L^\infty}\rt)\cr
&\leq C\epsilon_1 \|\nabla^k (h,n,v)\|_{L^2}^2  + C\epsilon_1\|\nabla^{k-1}(u,v)\|_{L^2}^2,
\end{aligned}$$
where we used the Sobolev inequality in Lemma \ref{lem:bern} to get
\[
\lt\|\nabla \lt(\frac{e^h}{n+1}\rt) \rt\|_{L^\infty} \ls \|\nabla(h,n)\|_{L^\infty} \quad \mbox{and} \quad \lt\|\nabla^k \lt(\frac{e^h}{n+1} \rt)\rt\|_{L^2} \ls \|\nabla^k(h,n)\|_{L^2}.
\]
This implies
$$\begin{aligned}
K_3 &\leq C\epsilon_1 \|\nabla^k (h,n,v)\|_{L^2}^2  + C\epsilon_1\|\nabla^{k-1}(u,v)\|_{L^2}^2 + \frac32\int_\om |\nabla^k u|^2\,dx - \frac16\int_\om |\nabla^k v|^2\,dx.
\end{aligned}$$
Integrating by parts, we get
$$\begin{aligned}
K_4 &= \int_{\om} \nabla^{k-1} \lt( \frac{\nabla p(n+1)}{n+1}\rt)\cdot \nabla^{k+1} v \,dx \cr
&=\int_{\om} \nabla^{k-1} \lt( \gamma(n+1)^{\gamma-2}\nabla n\rt)\cdot \nabla^{k+1} v \,dx \cr
&\leq C(1 - \delta_{k,1})\sum_{1 \leq l \leq k-1}\int_{\om} |\nabla^l ((n+1)^{\gamma-2})||\nabla^{k-l} n||\nabla^{k+1}v|\,dx + C\int_{\om} |\nabla^k n| |\nabla^{k+1} v|\,dx\cr
&\leq C(1 - \delta_{k,1})\sum_{1 \leq l \leq k-1}\|\nabla^l ((n+1)^{\gamma-2})\|_{H^1}\|\nabla^{k-l} n\|_{H^1}\|\nabla^{k+1}v\|_{L^2} + C\|\nabla^k n\|_{L^2}\|\nabla^{k+1}v\|_{L^2}\cr
&\leq C\|\nabla n\|_{H^{k-1}}\|\nabla^{k+1}v\|_{L^2}\cr
&\leq C\delta_1^{-1}\|\nabla n\|_{H^{k-1}}^2 + C\delta_1\|\nabla^{k+1}v\|_{L^2}^2,
\end{aligned}$$
due to the interpolation and Sobolev inequalities, where $\delta_1 >0$ will be determined later and $\delta_{i,j}$ denotes the Kronecker delta, i.e., $\delta_{i,j} = 1$ if $i=j$ and $\delta_{i,j} = 0$ if $i \neq j$.

We estimate $K_5$ by decomposing it into two parts as 
\begin{align*}
\begin{aligned}
K_5 &= \int_{\om} \nabla^{k} v \cdot \nabla^{k}\lt( \frac{\mu}{n+1}\nabla \cdot \nabla v + \frac{\mu + \lambda}{n+1}\nabla \nabla \cdot v \rt)dx\cr
&=:K_5^1 + K_5^2.
\end{aligned}
\end{align*}
Since the estimate of $K_5^2$ is very similar to that of $K_5^1$, we only provide the estimate of $K_5^1$. For this, we again rewrite it as the summation of three terms, $K_5^{1,i}, i=1,2,3$:

\begin{align*}
\begin{aligned}
K_5^1 &= \mu\int_{\om} \frac{1}{n+1}\nabla\cdot\nabla^{k+1}v \cdot \nabla^{k}v \,dx + \mu
\int_{\om}\nabla^{k}v \cdot \nabla^{k}\lt( \frac{1}{n+1}\rt) \nabla \cdot \nabla v \,dx\cr
&\quad + \mu(1 - \delta_{k,1})\sum_{1\leq l \leq k-1} \binom{k}{l}\int_{\om}\nabla^{k}v \cdot \nabla^l\lt( \frac{1}{n+1}\rt)\nabla^{k-l}\nabla \cdot \nabla v \,dx\cr
&=:K_{5}^{1,1} + K_{5}^{1,2} + K_{5}^{1,3}.
\end{aligned}
\end{align*}
Here $K_5^{1,i},i=1,2,3$ are estimated as follows.
\begin{align*}
K_{5}^{1,1}&= - \mu\int_{\om} \nabla\lt( \frac{1}{n+1}\nabla^{k}v\rt)\cdot \nabla^{k+1} v \,dx\cr
&= \mu\int_{\om} \lt( \frac{\nabla n}{(n+1)^2} \cdot \nabla^{k}v\rt)\cdot \nabla^{k+1} v \,dx - \mu\int_{\om} \frac{1}{n+1}|\nabla^{k+1} v|^2 dx\cr
&\leq C\epsilon_1\|\nabla^{k}v\|_{L^2}\|\nabla^{k+1}v\|_{L^2} - C{\mu}\|\nabla^{k+1} v\|_{L^2}^2\cr
&\leq C\epsilon_1\|\nabla^{k}v\|_{L^2}^2 - C(\mu - C\epsilon_1)\|\nabla^{k+1}v\|_{L^2}^2,\cr
K_{5}^{1,2}&\leq C\|\nabla^{k}v\|_{L^4}\|\nabla^2 v\|_{L^4}\lt\|\nabla^{k}\lt( \frac{1}{n+1}\rt) \rt\|_{L^2}\cr
&\leq C\|\nabla^{k}v\|_{H^1}\|\nabla^2 v\|_{H^1}\|\nabla^k n\|_{L^2}\cr
&\leq C\epsilon_1\|\nabla^{k+1}v\|_{L^2}^2 + C\epsilon_1\|\nabla^{k}v\|_{L^2}^2 + C\epsilon_1\|\nabla^{k} n\|_{L^2}^2,\cr
K_{5}^{1,3} &\leq C(1 - \delta_{k,1})\sum_{1\leq l \leq k-1}\|\nabla^{k}v\|_{L^4}\lt\| \nabla^l\lt( \frac{1}{n+1}\rt)\rt\|_{L^4}\|\nabla^{k+2-l} v\|_{L^2}\cr
&\leq C(1 - \delta_{k,1})\sum_{ 1 \leq l \leq k-1}\|\nabla^k v\|_{H^1}\|\nabla^l n\|_{H^1}\|\nabla^{k+2-l} v\|_{L^2}\cr
&\leq C(1 - \delta_{k,1})\|\nabla^{k}v\|_{H^1}\|\nabla n\|_{H^{k-1}}\|\nabla^3 v\|_{H^{k-2}} \cr
&\leq C\epsilon_1\|\nabla^{k+1}v\|_{L^2}^2 + C\epsilon_1\|\nabla^k v\|_{L^2}^2 + C\epsilon_1 \|\nabla n\|_{H^{k-1}}^2,
\end{align*}
where we used $\|f\|_{L^4} \ls \|f\|_{H^1}$ for $f \in H^1(\om)$ together with Lemma \ref{lem:bern}. This yields 
\[
K_{5}^1 \leq C\epsilon_1\|\nabla^k v\|_{L^2}^2 + C\epsilon_1 \|\nabla n\|_{H^{k-1}}^2- C(\mu - C\epsilon_1)\|\nabla^{k+1}v\|_{L^2}^2.
\]
Similarly, we obtain
\[
K_{5}^2 \leq C\epsilon_1\|\nabla^k v\|_{L^2}^2 + C\epsilon_1 \|\nabla n\|_{H^{k-1}}^2- C\epsilon_1\|\nabla^{k+1}v\|_{L^2}^2,
\]
and this implies
\[
K_{5} \leq C\epsilon_1\|\nabla^k v\|_{L^2}^2 + C\epsilon_1 \|\nabla n\|_{H^{k-1}}^2- C(\mu - C\epsilon_1)\|\nabla^{k+1}v\|_{L^2}^2.
\]
We now collect all of the above estimate to deduce
$$\begin{aligned}
&\frac{d}{dt}\|\nabla^k v\|_{L^2}^2 + \frac{1}{3}\|\nabla^k v\|_{L^2}^2 + C(\mu - C\epsilon_1 - C\delta_1)\|\nabla^{k+1}v\|_{L^2}^2\cr
&\quad \leq C\epsilon_1\|\nabla^k (h,n,v)\|_{L^2}^2 +3\|\nabla^k u\|_{L^2}^2 +C\delta_1^{-1}\|\nabla n\|_{H^{k-1}}^2 +C\epsilon_1 \|\nabla n\|_{H^{k-1}}^2.
\end{aligned}$$
By choosing $\delta_1 > 0$ small enough so that $C(\mu - C\epsilon_1 - C\delta_1) >0$, we conclude our desired result.
\end{proof}

We finally provide the estimate of $\|\nabla^k n\|_{L^2}$ for $1 \leq k \leq s$. For the same reason as before, we add a suitable cross term $\int_\om \frac{(n+1)^2}{2\mu + \lambda}\nabla^k n \cdot \nabla^{k-1} v\, dx$ to the usual $L^2$-norm estimate of $\nabla^k n$, and then get the desired dissipation rate for the density $n$. This proof is rather lengthy and technical, we postpone it to Appendix \ref{app_a}.

\begin{lemma}\label{lem_n}
Let $s>5/2$ and $T>0$ be given. Suppose that $\mh(T;s) \leq \epsilon_1$ for sufficiently small $\epsilon_1>0$. Then, for $1 \leq k \leq s$, we have
\begin{align*}
\begin{aligned}
&\frac{d}{dt}\int_{\om} \nabla^k n \cdot \lt( \frac{\nabla^k n}{2} + \frac{(n+1)^2}{2\mu + \lambda} \nabla^{k-1} v\rt) dx + C_2\|\nabla^k n\|_{L^2}^2\cr
&\quad \leq C\epsilon_1\|n\|_{H^{k-1}}^2 + C\epsilon_1\|v\|_{H^{k+1}}^2 + C\epsilon_1\|\nabla^{k-1}h\|_{L^2}^2 + C\|\nabla^k v\|_{L^2}^2 + C\|\nabla^{k-1}(u,v)\|_{L^2}^2,
\end{aligned}
\end{align*}
for some positive constant $C_2 > 0$. Here $C>0$ is independent of $T$. 
\end{lemma}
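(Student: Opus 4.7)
The approach is the Matsumura-Nishida \cite{MN1,MN2} cross-term strategy foreshadowed in the statement: since the continuity equation $\eqref{main_eq}_3$ is hyperbolic and provides no direct dissipation for $n$, one recovers the missing rate via the pressure gradient in the momentum equation by differentiating the auxiliary functional $\int_\om \nabla^k n\cdot\bigl(\tfrac12\nabla^k n+\tfrac{(n+1)^2}{2\mu+\lambda}\nabla^{k-1}v\bigr)\,dx$ in time. The weight $(n+1)^2/(2\mu+\lambda)$ is tailored so that the pressure contribution produces a clean dissipation of $\|\nabla^k n\|_{L^2}^2$ modulo smallness, with $C_2$ chosen close to $\gamma/(2\mu+\lambda)$.

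First I would compute $\tfrac12\frac{d}{dt}\|\nabla^k n\|_{L^2}^2$ by applying $\nabla^k$ to the continuity equation and pairing with $\nabla^k n$ in $L^2$: the transport part gives $\tfrac12\int(\nabla\cdot v)|\nabla^k n|^2\,dx\le C\epsilon_1\|\nabla^k n\|_{L^2}^2$ after integration by parts, while the commutators from $[\nabla^k,v\cdot\nabla]n$ and $[\nabla^k,n+1]\nabla\cdot v$ are controlled by Moser's inequality (Lemma \ref{lem:bern}) together with the smallness hypothesis. A residual term of the form $-\int(n+1)\nabla^k n\cdot\nabla^k\nabla\cdot v\,dx$ remains and is handled jointly with the viscous piece below. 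Next I would differentiate the cross term in time, applying the product rule to three pieces: the contribution from $\pa_t((n+1)^2)$ is lower order after substituting $n_t=-\nabla\cdot((n+1)v)$; the contribution from $\nabla^k n_t$ is handled again by continuity plus one integration by parts, producing a term bounded by $C\|\nabla^k v\|_{L^2}^2$ plus lower order; and the contribution from $\nabla^{k-1}v_t$ is obtained by dividing $\eqref{main_eq}_4$ by $n+1$ and substituting $v_t=-v\cdot\nabla v-\gamma(n+1)^{\gamma-2}\nabla n-Lv/(n+1)+e^h(u-v)/(n+1)$.

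In this last substitution, the pressure gradient contributes at leading order $-\int\tfrac{\gamma(n+1)^\gamma}{2\mu+\lambda}|\nabla^k n|^2\,dx$, which yields the dissipation $C_2\|\nabla^k n\|_{L^2}^2$ on the left-hand side after transposition. The convective term $-v\cdot\nabla v$ and the source $e^h(u-v)/(n+1)$ are estimated via Lemma \ref{lem:bern} by $C\|\nabla^{k-1}(u,v)\|_{L^2}^2$ and $C\epsilon_1\bigl(\|n\|_{H^{k-1}}^2+\|\nabla^{k-1}h\|_{L^2}^2\bigr)$. For the viscous piece $-Lv/(n+1)$ I would split $1/(n+1)=1-n/(n+1)$: the part carrying the explicit $n$-factor yields $C\epsilon_1\|v\|_{H^{k+1}}^2$ by Young's inequality, while the $1$-part is combined with the residual $-\int(n+1)\nabla^k n\cdot\nabla^k\nabla\cdot v\,dx$ from the first step and cancelled using the decomposition $Lv=-(2\mu+\lambda)\nabla\nabla\cdot v+\mu\nabla\times\nabla\times v$, where after integration by parts the curl-curl portion reduces to lower-order commutator terms in $n$ by gradient-curl duality.

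The main obstacle is precisely this last cancellation: a naive Young estimate applied either to the residual from the first step or to the principal part of $-Lv/(n+1)$ would produce $\|\nabla^{k+1}v\|_{L^2}^2$ with an $O(1)$ coefficient, incompatible with the $\epsilon_1$-weighted $\|v\|_{H^{k+1}}^2$ in the stated right-hand side. The resolution requires a careful integration-by-parts pairing of the residual with the principal-viscous term, using the continuity equation to rewrite $\nabla\cdot v$ in terms of $n_t$ and $v\cdot\nabla n$, so that only the genuinely $n$-weighted remainder survives at top order and carries the $\epsilon_1$ prefactor. Everything else reduces to Sobolev norms of $(h,u,n,v)$ of order at most $k+1$ (in $v$) or $k$ (in $h,n$) already listed on the right-hand side, and collecting all estimates concludes the proof.
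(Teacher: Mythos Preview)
Your strategy is the paper's: differentiate the given cross-functional, extract dissipation from the pressure term in $\nabla^{k-1}v_t$, and cancel the top-order viscous contribution against the residual $-\int(n+1)\nabla^k n\cdot\nabla^k\nabla\cdot v\,dx$ left over from the continuity estimate. The paper carries out this cancellation by Leibniz-expanding $\nabla^{k-1}\bigl(Lv/(n+1)\bigr)$; the leading term $\tfrac{1}{n+1}\nabla^{k-1}Lv$ combines with the outer weight $(n+1)^2$ to give exactly $(n+1)\nabla^{k-1}Lv$, and one integration by parts then produces $-\int(n+1)\nabla^{k-1}\partial_{ij}n\,\nabla^{k-1}\partial_j v_i\,dx$, the precise negative of the residual. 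Your curl--curl decomposition $Lv=-(2\mu+\lambda)\nabla\nabla\cdot v+\mu\nabla\times\nabla\times v$ is an equivalent route, since here $\nabla^k n=\nabla\partial^{\alpha'}n$ is a gradient and the curl--curl piece integrates by parts to a term carrying $\nabla n$, hence $\epsilon_1$-small.

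Two small points to watch. First, your splitting $1/(n+1)=1-n/(n+1)$ leaves the outer weight at $(n+1)^2$, not $(n+1)$, so your ``$1$-part'' does not match the residual exactly; you must peel off the extra factor $n(n+1)$ as an $\epsilon_1$-small remainder before the cancellation goes through. Second, your closing suggestion to rewrite $\nabla\cdot v$ via the continuity equation is neither needed nor helpful: it would reintroduce $n_t$, whereas the cancellation here is purely algebraic between two spatial integrals with no time derivative involved.
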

We now combine all of the estimates obtained in Lemmas \ref{lem_h}-\ref{lem_n} to have the uniform bound of $\mh(T;s)$ in time.
\begin{proposition}\label{prop_gd}Let $T > 0$ be given. Suppose $\mh(T;s) \leq \epsilon_1 \ll 1$. Then we have
\[
\mh(T;s) \leq  C_0\mh_0(s),
\]
where $C_0$ is a positive constant independent of $T$.
\end{proposition}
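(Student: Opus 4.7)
The plan is to combine Lemma~\ref{lem_sl2} with the higher-order estimates in Lemmas~\ref{lem_hu}, \ref{lem_h}, \ref{lem_v}, \ref{lem_n} and Remark~\ref{rmk_u} into a single weighted energy
\[
\mathcal{N}(t) \;=\; \|(h,u,n,v)(t)\|_{L^2}^2 + \sum_{k=1}^{s}\mathcal{N}_k(t)\;\approx\;\|(h,u,n,v)(t)\|_{H^s}^2,
\]
satisfying a Gronwall-type inequality $\tfrac{d}{dt}\mathcal{N} + c\,\mathcal{D}\leq C\epsilon_1\,\mathcal{N}$ with $\mathcal{N}\lesssim\mathcal{D}$; smallness of $\epsilon_1$ then forces $\tfrac{d}{dt}\mathcal{N}\leq 0$, and $\mathcal{N}(t)\leq\mathcal{N}(0)\lesssim \mh_0(s)$ is the claim. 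At each level $1\leq k\leq s$ I would set
\[
\mathcal{N}_k := \beta\|\nabla^k(h,u)\|_{L^2}^2 + \alpha\|\nabla^k v\|_{L^2}^2 + \gamma\|\nabla^{k-1}u\|_{L^2}^2 + \sigma_1\!\int_\om\nabla^{k-1}u\cdot\nabla^k h\,dx + \sigma_2\!\int_\om\nabla^k n\cdot\Bigl(\tfrac{1}{2}\nabla^k n + \tfrac{(n+1)^2}{2\mu+\lambda}\nabla^{k-1}v\Bigr)dx,
\]
with positive weights $\alpha,\beta,\gamma,\sigma_1,\sigma_2$. By Young's inequality, the boundedness of $n$ coming from the smallness hypothesis, and Remark~\ref{rmk_eqv}, $\mathcal{N}_k$ is equivalent to $\|\nabla^k(h,u,n,v)\|_{L^2}^2 + \|\nabla^{k-1}u\|_{L^2}^2$ for $\sigma_1,\sigma_2$ small enough.

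Differentiating $\mathcal{N}_k$ and plugging in the five estimates produces dissipation of $\|\nabla^k h\|_{L^2}^2$ from the $\sigma_1$ cross-term, $\|\nabla^k n\|_{L^2}^2$ from the $\sigma_2$ cross-term, $\|\nabla^{k+1}v\|_{L^2}^2$ from Lemma~\ref{lem_v}, and $\|\nabla^k u\|_{L^2}^2$ from Lemma~\ref{lem_hu}, against a list of leak terms. These I would absorb by choosing weights in a specific order: $\gamma<\sigma_1$ so the $\tfrac{\gamma}{4}\|\nabla^k h\|_{L^2}^2$ from Remark~\ref{rmk_u} stays below $\tfrac{3\sigma_1}{4}\|\nabla^k h\|_{L^2}^2$; $\beta>2\alpha$ so the $3\alpha\|\nabla^k u\|_{L^2}^2$ leak of Lemma~\ref{lem_v} is dominated by $\tfrac{3\beta}{2}\|\nabla^k u\|_{L^2}^2$ of Lemma~\ref{lem_hu}; and $\sigma_2$ small enough that its $\|\nabla^k v\|_{L^2}^2$ and $\|\nabla^{k+1}v\|_{L^2}^2$ contributions stay under the $v$-viscous dissipation. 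The hardest leak is the $2\beta\|\nabla^k v\|_{L^2}^2$ produced by Lemma~\ref{lem_hu}: under $\beta>2\alpha$ the $\tfrac{\alpha}{3}$-dissipation in Lemma~\ref{lem_v} simply cannot absorb it, so I would handle it through the interpolation
\[
2\beta\|\nabla^k v\|_{L^2}^2 \leq \tfrac{\beta}{\delta}\|\nabla^{k-1}v\|_{L^2}^2 + \beta\delta\|\nabla^{k+1}v\|_{L^2}^2,
\]
picking $\delta<\alpha C_1\mu/\beta$ so that the $\|\nabla^{k+1}v\|_{L^2}^2$-part is absorbed into the viscous dissipation, while the $\|\nabla^{k-1}v\|_{L^2}^2$-residue cascades into the level-$(k-1)$ estimate when summing from $k=s$ down to $k=1$ and terminates at the $\|v\|_{L^2}^2$-bound from Lemma~\ref{lem_sl2}. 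A parallel cascade handles the $\alpha C\|\nabla n\|_{H^{k-1}}^2$ leak of Lemma~\ref{lem_v} against the $\sigma_2 C_2\|\nabla^j n\|_{L^2}^2$ dissipations at levels $j\leq k$ of Lemma~\ref{lem_n}, and the $C\epsilon_1\|v\|_{H^{k+1}}^2$ leak of Lemma~\ref{lem_n} against the level-$k$ viscous dissipation of Lemma~\ref{lem_v}.

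The main obstacle is precisely the algebraic incompatibility of the drag-induced coefficients $(\tfrac{3}{2},2)$ in Lemma~\ref{lem_hu} and $(\tfrac{1}{3},3)$ in Lemma~\ref{lem_v}: no diagonal weights $(\alpha,\beta)$ can satisfy both $\tfrac{3\beta}{2}>3\alpha$ and $\tfrac{\alpha}{3}>2\beta$, so a single-level closure is algebraically impossible, and the viscous dissipation $\|\nabla^{k+1}v\|_{L^2}^2$ must be routed into the $u$-$v$ block through the interpolation above. Once the weights $(\alpha,\beta,\gamma,\sigma_1,\sigma_2,\delta)$ are tuned in the hierarchy described and summed over $k$, the closed inequality $\tfrac{d}{dt}\mathcal{N}+c\mathcal{D}\leq C\epsilon_1\mathcal{N}$ together with $\mathcal{N}\lesssim\mathcal{D}$ gives the desired uniform bound under $\epsilon_1\ll 1$.
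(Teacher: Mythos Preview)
Your single weighted-energy approach is genuinely different from what the paper does, and the difference matters. The paper proceeds by \emph{induction on $k$}: assuming $\mh(T;m)\le C\mh_0(m)$, it invokes Lemma~\ref{lem_v} at level $m$ (one below the current top level). The point is that the troublesome leak $3\|\nabla^m u\|_{L^2}^2$ from Lemma~\ref{lem_v} is then bounded by the induction hypothesis and becomes a harmless constant $C\mh_0(m)$ on the right-hand side, while the viscous term $C_1\mu\|\nabla^{m+1}v\|_{L^2}^2$ absorbs the $2\|\nabla^{m+1}v\|_{L^2}^2$ leak from the $(h,u)$ block by choosing the weight $\alpha_1$ large --- with \emph{no} constraint linking $\alpha_1$ back to the $u$-dissipation. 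A Gronwall inequality with constant source $C\mh_0(m)$ then gives $\sup_t\|\nabla^{m+1}(h,u)\|_{L^2}^2\le C\mh_0(m+1)$, and a second pass handles $(n,v)$ the same way.

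Your scheme, by contrast, keeps all levels coupled with level-independent weights $(\alpha,\beta,\dots)$, and this is where the gap lies. The ``incompatibility'' you identified is real, but your interpolation fix does not escape it. After interpolation, the $\tfrac{\beta}{\delta}\|\nabla^{k-1}v\|_{L^2}^2$ residue must still be absorbed by the $v$-dissipation at level $k-1$, which is only $\alpha(\tfrac{1}{3}+C_1\mu)\|\nabla^{k-1}v\|_{L^2}^2$; combined with your constraints $\beta>2\alpha$ and $\beta\delta<C_1\alpha\mu$, this forces a lower bound of the form $C_1\mu>11/3$ (or worse), which is not available for general $\mu$. In other words, with fixed weights you cannot make both ``$\tfrac{3\beta}{2}>3\alpha$'' and ``$v$-budget $>2\beta$'' hold simultaneously unless the viscosity is large. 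The paper's induction sidesteps this completely because the level-$m$ leak $3\alpha_1\|\nabla^m u\|_{L^2}^2$ is already a constant, so $\alpha_1$ is unconstrained from above. If you want to push your single-energy route through, you would need level-dependent weights (say geometrically decreasing in $k$) so that the viscous gain from level $k-1$ dominates the drag leak at level $k$; this is essentially what the induction encodes. A minor additional point: your closing claim $\mathcal N\lesssim\mathcal D$ and hence $\tfrac{d}{dt}\mathcal N\le 0$ cannot hold as stated, since $\mathcal D$ carries no dissipation of $\|h\|_{L^2}^2$, $\|n\|_{L^2}^2$ (nor of $\|u\|_{L^2}^2$ coming from $\mathcal N_1$); at best one obtains $\tfrac{d}{dt}\mathcal N+c\mathcal N\le C\mh_0(0)$ after invoking Lemma~\ref{lem_sl2}, which still yields the bound but via Gronwall-with-source rather than monotonicity.
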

\begin{proof}We claim that there exists a positive constant $C$ independent of $T$ such that
\bq\label{est_ind}
\mh(T;k) \leq C\mh_0(k) \quad \mbox{for} \quad 0 \leq k \leq s.
\eq
For the proof of claim, we use the induction argument. First, it follows from Lemma \ref{lem_sl2} that the inequality \eqref{est_ind} holds for $k=0$. We now assume that \eqref{est_ind} holds for $k=m < s$. Then we deduce from Lemmas \ref{lem_hu}, \ref{lem_h}, and Remark \ref{rmk_u} that 
\begin{align}\label{est_hum}
\begin{aligned}
&\frac{d}{dt}\lt(\int_\om |\nabla^{m+1}(h,u)|^2\,dx + \int_\om \nabla^m u \cdot \nabla^{m+1} h \,dx + \int_\om |\nabla^m u|^2\,dx\rt)\cr
&\quad \leq - \frac12\|\nabla^{m+1}u\|_{L^2}^2 - \frac14\|\nabla^{m+1}h\|_{L^2}^2 + C\epsilon_1\|\nabla^{m+1}(h,u)\|_{L^2}^2 + 2\|\nabla^{m+1}v\|_{L^2}^2\cr
&\qquad + C\epsilon_1\|\nabla^m u\|_{L^2}^2 + C\|\nabla^m(u,v)\|_{L^2}^2\cr
&\quad \leq -\lt(\frac14 - C\epsilon_1\rt)\|\nabla^{m+1}(h,u)\|_{L^2}^2 + 2\|\nabla^{m+1}v\|_{L^2}^2 + C\mh_0(m),
\end{aligned}
\end{align}
where we also used the fact from the induction hypothesis that
\[
C\epsilon_1\|\nabla^m u\|_{L^2}^2 + C\|\nabla^m(u,v)\|_{L^2}^2 \leq C\|\nabla^m(u,v)\|_{L^2}^2 \leq C\mh(T;m) \leq C\mh_0(m).
\]
In order to get the dissipation rate of $\|\nabla^{m+1}v \|_{L^2}^2$, we notice from Lemma \ref{lem_v} and the induction hypothesis that
\begin{align}\label{est_vm}
\begin{aligned}
&\frac{d}{dt}\|\nabla^m v\|_{L^2}^2 + \frac{1}{3}\|\nabla^m v\|_{L^2}^2 + C_1\mu\|\nabla^{m+1}v\|_{L^2}^2\cr
&\quad \leq C\epsilon_1\|\nabla^m (h,n,v)\|_{L^2}^2 +3\|\nabla^m u\|_{L^2}^2 +C\|\nabla n\|_{H^{m-1}}^2\cr
&\quad \leq C\mh_0(m).
\end{aligned}
\end{align}
Then we multiply \eqref{est_vm} by $\alpha_1 > 0$ and add it to \eqref{est_hum} to find
$$\begin{aligned}
&\frac{d}{dt}\lt(\int_\om |\nabla^{m+1}(h,u)|^2\,dx + \int_\om \nabla^m u \cdot \nabla^{m+1} h \,dx + \int_\om |\nabla^m u|^2\,dx + \alpha_1\int_\om |\nabla^m v|^2\,dx\rt)\cr
&\quad \leq -\lt(\frac14 - C\epsilon_1\rt)\|\nabla^{m+1}(h,u)\|_{L^2}^2 - (C_1\alpha_1\mu - 2)\|\nabla^{m+1}v\|_{L^2}^2 - C\alpha_1\|\nabla^m v\|_{L^2}^2 + C\mh_0(m)\cr
&\quad \leq - C_3\lt( \|\nabla^{m+1}(h,u)\|_{L^2}^2 + \|\nabla^m (u,v)\|_{L^2}^2\rt) + C\mh_0(m),
\end{aligned}$$
for some positive constant $C_3$, where $\alpha_1 > 0$ is chosen large enough so that $C_1\alpha_1 \mu - 2 > 0$. Then by applying the Gronwall's inequality together with Remark \ref{rmk_eqv} we find
\bq\label{res_hu}
\sup_{0 \leq t \leq T}\|\nabla^{m+1}(h,u)\|_{L^2}^2 \leq C\mh_0(m+1),
\eq
for some positive constant $C > 0$. 

We next estimate $\|\nabla^{m+1}(n,v)\|_{L^2}^2$. It follows from Lemma \ref{lem_n} together with \eqref{est_vm} that
\begin{align}\label{est_nm1}
\begin{aligned}
&\frac{d}{dt}\lt(\int_{\om} \nabla^{m+1} n \cdot \lt( \frac{\nabla^{m+1} n}{2} + \frac{(n+1)^2}{2\mu + \lambda} \nabla^{m} v\rt) dx+\alpha_2\int_\om |\nabla^m v|^2\,dx\rt)\cr
&\quad \leq -(C_1\alpha_2\mu - C)\|\nabla^{m+1}v\|_{L^2}^2 - C_2\|\nabla^{m+1}n\|_{L^2}^2 + C\epsilon_1 \|\nabla^{m+2}v\|_{L^2}^2 + C\mh_0(m)\cr
&\quad \leq - C_4\|\nabla^{m+1}(n,v)\|_{L^2}^2 + C\epsilon_1\|\nabla^{m+2}v\|_{L^2}^2 + C\mh_0(m),
\end{aligned}
\end{align}
for some positive constant $C_4$, where $\alpha_2 > 0$ is taken so that $C_1\alpha_2 \mu - C > 0$. On the other hand, we also find from Lemma \ref{lem_v} that
\begin{align}\label{est_vm1}
\begin{aligned}
&\frac{d}{dt}\|\nabla^{m+1} v\|_{L^2}^2 + \frac{1}{3}\|\nabla^{m+1} v\|_{L^2}^2 + C_1\mu\|\nabla^{m+2}v\|_{L^2}^2\cr
&\quad \leq C\epsilon_1\|\nabla^{m+1} (h,n,v)\|_{L^2}^2 +3\|\nabla^{m+1} u\|_{L^2}^2 +C\|\nabla n\|_{H^{m}}^2\cr
&\quad \leq C\epsilon_1\|\nabla^{m+1} v\|_{L^2}^2 + C\|\nabla^{m+1} n\|_{L^2}^2 + C\mh_0(m+1).
\end{aligned}
\end{align}
Multiplying both side of inequality \eqref{est_vm1} by $\alpha_3 > 0$ which will be determined later and combining it with \eqref{est_nm1}, we deduce
$$\begin{aligned}
&\frac{d}{dt}\lt(\int_{\om} \nabla^{m+1} n \cdot \lt( \frac{\nabla^{m+1} n}{2} + \frac{(n+1)^2}{2\mu + \lambda} \nabla^{m} v\rt) dx+\alpha_2\int_\om |\nabla^m v|^2\,dx + \alpha_3 \int_\om |\nabla^{m+1}v|^2\,dx\rt)\cr
&\quad \leq - (C_4 - C\alpha_3)\|\nabla^{m+1}n\|_{L^2}^2 -(C_1\alpha_3\mu -C\epsilon_1)\|\nabla^{m+2}v\|_{L^2}^2 \cr
&\qquad  - \alpha_3\lt(\frac{1}{3} - C\epsilon_1\rt)\|\nabla^{m+1} v\|_{L^2}^2+ C\mh_0(m+1).
\end{aligned}$$
For notational simplicity, we set
\[
E_{m+1} := \int_{\om} \nabla^{m+1} n \cdot \lt( \frac{\nabla^{m+1} n}{2} + \frac{(n+1)^2}{2\mu + \lambda} \nabla^{m} v\rt) dx+\alpha_2\int_\om |\nabla^m v|^2\,dx + \alpha_3 \int_\om |\nabla^{m+1}v|^2\,dx,
\]
and choose $\alpha_3 > 0$ small enough such that $C_4 - C\alpha_3 > 0$ and then select $\epsilon_1 > 0$ such that $C_1\alpha_3 \mu - C\epsilon_1 > 0$ and $1/3 - C\epsilon_1 > 0$. This yields 
\[
\frac{d}{dt}E_{m+1} \leq -C_5\|\nabla^{m+1}(n,v)\|_{L^2}^2 - C_6\|\nabla^{m+2} v\|_{L^2}^2 - C_7\|\nabla^m v\|_{L^2}^2 + C\mh_0(m+1),
\]
for some positive constants $C_5, C_6$, and $C_7$. Note that there exists a positive constant $C > 0$ such that
\bq\label{est_eqv_nv}
\frac1C \lt(\|\nabla^{m+1}(n,v)\|_{L^2}^2 + \|\nabla^m v\|_{L^2}^2\rt) \leq E_{m+1} \leq C \lt(\|\nabla^{m+1}(n,v)\|_{L^2}^2 + \|\nabla^m v\|_{L^2}^2\rt),
\eq
for $\alpha_2 > 0$ large enough due to 
\[
\lt|\int_\om \nabla^{m+1}n \cdot \frac{(n+1)^2}{2\mu + \lambda}\nabla^m v\,dx\rt| \leq \frac14\|\nabla^{m+1} n\|_{L^2}^2 + C\|\nabla^m v\|_{L^2}^2.
\]
Thus by applying the Gronwall's inequality together with the relation \eqref{est_eqv_nv} we have
\bq\label{res_nv}
\sup_{0 \leq t \leq T}\|\nabla^{m+1}(n,v)\|_{L^2}^2 \leq C\mh_0(m+1).
\eq
Hence we conclude our desired result by combining \eqref{res_hu} and \eqref{res_nv}.
\end{proof}
We now construct the global classical solutions by combining the local existence theory in Theorem \ref{thm:local-ext} and the uniform {\it a priori} estimate in Proposition \ref{prop_gd}.
\begin{proof}[Proof of Theorem \ref{thm_ext}] We first choose a positive constant $M := \min\{ \epsilon_0, \epsilon_1\}$, where $\epsilon_0$ and $\epsilon_1$ are given in Theorem \ref{thm:local-ext} and Proposition \ref{prop_gd}, respectively. We then take the initial data $(h_0,u_0,n_0,v_0)$ satisfying
\[
\|(h_0,u_0,n_0,v_0)\|_{H^s} \leq \frac{M}{2\sqrt{1 + C_0}},
\]
where $C_0$ is appeared in Proposition \ref{prop_gd}. We now define the lifespan of the solutions for the system \eqref{main_eq}-\eqref{ini_main_eq} as
\bq\label{def_t}
T := \sup\lt\{ t \geq 0 \,:\, \sup_{0 \leq \tau \leq t} \|(h(\tau),u(\tau),n(\tau),v(\tau))\|_{H^s} < M  \rt\}
\eq
Note that 
\[
\|(h_0,u_0,n_0,v_0)\|_{H^s} \leq \frac{M}{2\sqrt{1 + C_0}} \leq \frac{M}{2} \leq \epsilon_0.
\]
This and together with Theorem \ref{thm:local-ext} implies $T > 0$. Suppose $T < +\infty$. Then it follows from the definition of $T$ \eqref{def_t} and Theorem \ref{thm:local-ext} that
\bq\label{contra}
\sup_{0 \leq t \leq T} \|(h(t),u(t),n(t),v(t))\|_{H^s} = M.
\eq
On the other hand, we find from Proposition \ref{prop_gd} that
\[
\sup_{0 \leq t \leq T} \|(h(t),u(t),n(t),v(t))\|_{H^s} \leq \sqrt{C_0}\|(h_0,u_0,n_0,v_0)\|_{H^s} \leq \frac{M\sqrt{C_0}}{2\sqrt{1 + C_0}} \leq \frac{M}{2},
\]
and this is a contradiction to \eqref{contra}. Hence we conclude that the lifespan of the solutions $T = \infty$ and complete the proof.

\end{proof}

%
%
%
%
\section{Large-time behavior}\label{sec:lg}
In this section, we study the large-time behavior of classical solutions to the system \eqref{f_eq}. As mentioned in Section \ref{sec:intro}, inspired by \cite{Choi1,Choi2}, we first define a temporal energy function $\me$ as
\begin{align}\label{tem_e}
\begin{aligned}
\me(t) &:= \frac12\intt \rho|u - m_c|^2\,dx + \intt \rho \int_{\rho_c}^\rho \frac{s-\rho_c}{s^2}\,dsdx + \frac12\intt n|v - j_c|^2\,dx \cr
&\quad + \intt n \int^n_{n_c} \frac{s^\gamma - n_c^\gamma}{s^2}\,ds dx + \frac12\lt( \frac{n_c\,\rho_c}{n_c + \rho_c}\rt)|m_c - j_c|^2.
\end{aligned}
\end{align}
In the lemma below, we show that $\me$ has the same dissipation with the total energy $E$ defined in Lemma \ref{lem_energy}.
\begin{lemma}Let $(\rho,u,n,v)$ be any classical solutions to the system \eqref{f_eq}. Then we have
\[
\frac{d}{dt}\me(t) + \md(t) = 0,
\]
where the corresponding dissipation $\md(t)$ is given by
\[
\md(t) := \mu\intt |\nabla v|^2\,dx + (\mu + \lambda)\intt |\nabla \cdot v|^2\,dx + \intt \rho|u - v|^2\,dx.
\]
\end{lemma}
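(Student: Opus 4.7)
My plan is to decompose $\me(t)$ as half the standard total energy plus pure correction terms in $(m_c,j_c)$, and then eliminate the corrections using the conservation of total momentum from Lemma \ref{lem_energy}(i). First I would expand the shifted kinetic contributions, using that $\rho_c$ and $n_c$ are $t$-independent (mass conservation):
$$\intt \rho|u-m_c|^2\,dx = \intt \rho|u|^2\,dx-\rho_c|m_c|^2,\qquad \intt n|v-j_c|^2\,dx = \intt n|v|^2\,dx-n_c|j_c|^2.$$
A direct computation of the antiderivatives $\rho\int_{\rho_c}^{\rho}\frac{s-\rho_c}{s^{2}}\,ds=\rho\ln(\rho/\rho_c)+\rho_c-\rho$ and $n\int_{n_c}^{n}\frac{s^{\gamma}-n_c^{\gamma}}{s^{2}}\,ds=\frac{n^{\gamma}}{\gamma-1}+n_c^{\gamma}-\frac{\gamma n\, n_c^{\gamma-1}}{\gamma-1}$, combined with the conserved identities $\intt\rho\,dx=\rho_c$ and $\intt n\,dx=n_c$, identifies the two potential contributions in $\me$ with $\intt\rho\ln\rho\,dx$ and $\tfrac{1}{\gamma-1}\intt n^{\gamma}\,dx$ modulo $t$-independent constants.

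Setting $E(t):=\intt\rho|u|^{2}+2\rho\ln\rho+n|v|^{2}+\tfrac{2}{\gamma-1}n^{\gamma}\,dx$ (the original-system version of the functional in Lemma \ref{lem_energy}(ii)), differentiation yields
$$\frac{d}{dt}\me(t)=\frac{1}{2}\frac{d}{dt}E(t)-\rho_c m_c\cdot\dot m_c-n_c j_c\cdot\dot j_c+\frac{n_c\rho_c}{n_c+\rho_c}(m_c-j_c)\cdot(\dot m_c-\dot j_c).$$
Next I would invoke the conservation $\tfrac{d}{dt}\intt(\rho u+nv)\,dx=0$ — obtained by integrating the two momentum equations in \eqref{f_eq} and using $\intt Lv\,dx=0$ on the torus — which reads $\rho_c\dot m_c+n_c\dot j_c=0$. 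This gives $\dot m_c-\dot j_c=\tfrac{n_c+\rho_c}{n_c}\dot m_c$, the coupling term simplifies to $\rho_c(m_c-j_c)\cdot\dot m_c$, and the three correction contributions telescope to $-j_c\cdot(\rho_c\dot m_c+n_c\dot j_c)=0$.

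All that remains is the identity $\tfrac12\tfrac{d}{dt}E(t)=-\md(t)$, which is the original-system analogue of Lemma \ref{lem_energy}(ii): multiply the $(\rho u)$-equation by $u$ and the $(nv)$-equation by $v$, integrate on $\T^{3}$, and convert the pressure-work terms using $\intt u\cdot\nabla\rho\,dx=\tfrac{d}{dt}\intt\rho\ln\rho\,dx$ and $\intt v\cdot\nabla p(n)\,dx=\tfrac{1}{\gamma-1}\tfrac{d}{dt}\intt n^{\gamma}\,dx$ (both recorded in Lemma \ref{lem_press}); the two drag forcings combine into $-\intt\rho|u-v|^{2}\,dx$ while $\intt v\cdot Lv\,dx$ produces the two viscous quadratic forms in $\md$. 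The one point that will require some care is purely algebraic: the weight $\tfrac12\frac{n_c\rho_c}{n_c+\rho_c}$ in $\me$ is precisely what is needed so that, under the single constraint $\rho_c\dot m_c+n_c\dot j_c=0$, the three correction contributions cancel exactly; any other weight would leave an unbalanced residual proportional to $\dot m_c$ that could not in general be absorbed.
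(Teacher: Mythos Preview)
Your argument is correct, but it proceeds differently from the paper. The paper differentiates the shifted pieces of $\me$ directly: it shows
\[
\frac{d}{dt}\lt(\frac12\intt\rho|u-m_c|^2\,dx+\intt\rho\int_{\rho_c}^\rho\frac{s-\rho_c}{s^2}\,ds\,dx+\frac12\intt n|v-j_c|^2\,dx+\intt n\int_{n_c}^n\frac{s^\gamma-n_c^\gamma}{s^2}\,ds\,dx\rt)+\md(t)=(m_c-j_c)\cdot\intt\rho(u-v)\,dx,
\]
and then, using $n_c\,j_c'=\intt\rho(u-v)\,dx$ together with $\rho_c m_c'=-n_c j_c'$, computes $\frac12\frac{d}{dt}|m_c-j_c|^2=-\bigl(\tfrac{1}{n_c}+\tfrac{1}{\rho_c}\bigr)(m_c-j_c)\cdot\intt\rho(u-v)\,dx$, so that the weight $\tfrac{n_c\rho_c}{n_c+\rho_c}$ kills the residual. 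You instead rewrite $\me=\tfrac12 E-\tfrac12\rho_c|m_c|^2-\tfrac12 n_c|j_c|^2+\tfrac12\tfrac{n_c\rho_c}{n_c+\rho_c}|m_c-j_c|^2+\text{const}$ and show the three correction terms have vanishing total derivative under the single constraint $\rho_c\dot m_c+n_c\dot j_c=0$. Your route cleanly separates the physical dissipation identity $\tfrac12 E'=-\md$ from a purely algebraic cancellation, and it uses only total momentum conservation rather than the explicit formula for $j_c'$; the paper's route keeps the shifts throughout and exhibits the residual $(m_c-j_c)\cdot\intt\rho(u-v)\,dx$ explicitly, which is convenient later when the same quantity reappears in the large-time analysis.
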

\begin{proof}It is a straightforward to get
$$\begin{aligned}
&\frac{d}{dt}\lt(\frac12 \intt \rho|u-m_c|^2\,dx + \intt \rho \ln \rho\,dx\rt) = -\intt \rho(u-v) \cdot (u - m_c)\,dx\cr
&\frac{d}{dt}\lt(\frac12 \intt n| v - j_c|^2\,dx + \frac{1}{\gamma - 1}\intt n^\gamma\,dx\rt) + \mu\intt |\nabla v|^2\,dx + (\mu + \lambda)\intt |\nabla \cdot v|^2\,dx\cr
&\qquad = \intt \rho(u-v)\cdot (v - j_c)\,dx.
\end{aligned}$$
This and together with Lemma \ref{lem_press} yields
\begin{align}\label{est_en1}
\begin{aligned}
&\frac{d}{dt}\lt(\frac12 \intt \rho|u-m_c|^2\,dx + \intt \rho \int_{\rho_c}^\rho \frac{s-\rho_c}{s^2}\,dsdx+ \frac12 \intt n| v - j_c|^2\,dx + \intt n \int^n_{n_c} \frac{s^\gamma - n_c^\gamma}{s^2}\,ds dx\rt)\cr
&\qquad + \mu\intt |\nabla v|^2\,dx + (\mu + \lambda)\intt |\nabla \cdot v|^2\,dx + \intt \rho|u-v|^2\,dx \cr
&\qquad \qquad = (m_c - j_c)\cdot\intt \rho(u-v)\,dx.
\end{aligned}
\end{align}
On the other hand, it follows from the conservation of total momentum that
\bq\label{est_tm}
\frac{d}{dt}\lt( \rho_c m_c\rt) + \frac{d}{dt}\lt( n_c j_c\rt) = 0, \quad \mbox{i.e.,} \quad \frac{d}{dt} m_c = -\frac{n_c}{\rho_c}\frac{d}{dt}j_c.
\eq
Then we obtain
\begin{align}\label{est_en2}
\begin{aligned}
\frac12\frac{d}{dt}|m_c - j_c|^2 &= (m_c - j_c)\cdot \frac{d}{dt}(m_c - j_c) \cr
&= - \lt(1 + \frac{n_c}{\rho_c} \rt)(m_c - j_c) \cdot \frac{d}{dt} j_c \cr
&= -\lt(\frac{1}{n_c} + \frac{1}{\rho_c} \rt)(m_c - j_c) \cdot \intt \rho(u-v)\,dx
\end{aligned}
\end{align}
Combining \eqref{est_en1} and \eqref{est_en2}, we conclude our desired result.
\end{proof}
\begin{remark}\label{rmk_eqvv}1. We set 
\[
\widetilde E(t) := \int_\om \rho|u|^2 dx + \int_\om (\rho - \rho_c)^2 dx + \int_\om n|v|^2 dx + \int_\om (n - n_c)^2 dx.
\]
Then by using the almost same argument as in Remark \ref{rmk:energy} we get
\[
\widetilde E(t) \leq C\widetilde E_0 \quad \mbox{for} \quad t \geq 0,
\]
where $\widetilde E_0 = \widetilde E(0)$ and $C$ is a positive constant independent of $t$.

2. We reminder the reader that our Lyapunov function $\ml$ is given by
\[
\ml(t)= \int_{\T^3} \rho |u-m_c|^2 dx + \int_{\T^3} (\rho - \rho_c)^2 dx + \int_{\T^3} n| v - j_c|^2 dx + |m_c - j_c|^2 + \int_{\T^3} (n-n_c)^2 dx.
\]
Since it follows from Lemma \ref{lem_press} that 
\[
\intt \rho \int_{\rho_c}^\rho \frac{s-\rho_c}{s^2}\,dsdx+ \intt n \int^n_{n_c} \frac{s^\gamma - n_c^\gamma}{s^2}\,ds dx \approx \int_{\T^3} (\rho - \rho_c)^2\,dx +\int_{\T^3} (n-n_c)^2 dx,
\]
$\ml(t) \approx \me(t)$ for all $t \geq 0$, i.e., there exists a $C > 0$ independent of $t$ such that 
\[
\frac{1}{C} \me(t) \leq \ml(t) \leq C\me(t) \quad \mbox{for} \quad t \geq 0.
\]
\end{remark}
We next present several estimates for averaged quantities which will be frequently used in the rest of this section. 
\begin{lemma}\label{lem_mom}Let $m_c$ and $j_c$ be the local momenta of the fluids defined in \eqref{def1}. Then we have
$$\begin{aligned}
&(i)\,\,\,\, |m_c|^2 + |j_c|^2 \leq C\widetilde E_0 \quad \mbox{and} \quad |m_c^\prime|^2 + |j_c^\prime|^2 \leq C\intt \rho|u-v|^2dx,\cr
&(ii) \,\,|v_c - j_c|^2 \leq C\intt |\nabla v|^2\,dx, \quad \mbox{where} \quad v_c := \intt v\,dx,
\end{aligned}$$
where $^\prime$ denotes the time-derivative, i.e., $\{\}^\prime := \frac{d}{dt}\{\}$, and $C$ is a positive constant independent of $t$.
\end{lemma}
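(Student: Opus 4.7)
The statement has two parts, and both are straightforward applications of Cauchy--Schwarz together with the conservation laws and mass/energy estimates already available from Lemma \ref{lem_energy} and Remark \ref{rmk_eqvv}. The only slightly nontrivial step is the Poincar\'e--type manipulation needed for (ii).

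\textbf{Part (i), bounds on $m_c,j_c$.} The plan is to apply Cauchy--Schwarz to the definitions \eqref{def1}: write
\[
|m_c|^2 = \frac{1}{\rho_c^2}\Bigl|\intt \sqrt{\rho}\,\sqrt{\rho}\,u\,dx\Bigr|^2 \leq \frac{1}{\rho_c}\intt \rho|u|^2\,dx,
\]
and similarly for $j_c$, then invoke the bound $\intt \rho|u|^2\,dx + \intt n|v|^2\,dx \leq \widetilde E(t) \leq C\widetilde E_0$ from Remark \ref{rmk_eqvv}; since $\rho_c = \rho_c(0)$ and $n_c = n_c(0)$ are constants by Lemma \ref{lem_energy}, we get the claimed bound.

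\textbf{Part (i), bounds on $m_c',j_c'$.} Using mass conservation $\rho_c' = n_c' = 0$, I would differentiate $\rho_c m_c = \intt \rho u\,dx$ and $n_c j_c = \intt nv\,dx$ in time. Integrating the momentum equations $\eqref{f_eq}_2$ and $\eqref{f_eq}_4$ over $\T^3$, the convective and pressure terms vanish by periodicity, as does $\intt Lv\,dx$, leaving
\[
\rho_c m_c' = -\intt \rho(u-v)\,dx, \qquad n_c j_c' = \intt \rho(u-v)\,dx.
\]
Another application of Cauchy--Schwarz against the measure $\rho\,dx$ yields $|m_c'|^2 + |j_c'|^2 \leq C\intt \rho|u-v|^2\,dx$ with $C$ depending only on $\rho_c,n_c$.

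\textbf{Part (ii).} Here the key identity is that $v_c = \intt v\,dx$ is the spatial average (normalizing $|\T^3|=1$), so $\intt (v_c-v)\,dx = 0$. Multiplying and subtracting, I would write
\[
n_c(v_c - j_c) = \intt n(v_c - v)\,dx = \intt (n - n_c)(v_c - v)\,dx,
\]
then apply Cauchy--Schwarz to get $n_c^2|v_c-j_c|^2 \leq \bigl(\intt(n-n_c)^2\,dx\bigr)\bigl(\intt|v - v_c|^2\,dx\bigr)$. The first factor is bounded by $C\widetilde E_0$ by Remark \ref{rmk_eqvv}, and the Poincar\'e inequality on $\T^3$ controls the second by $C\intt |\nabla v|^2\,dx$. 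Combining gives the result. The main (and only) obstacle is noticing that inserting $n_c$ to exploit zero mean of $v-v_c$ is what converts a bound involving $n$ into the desired $\|\nabla v\|_{L^2}^2$ bound; this is also the reason the argument is restricted to the periodic setting.
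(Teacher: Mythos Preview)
Your argument is correct and matches the paper's proof almost verbatim for part (i). For part (ii) there is a minor cosmetic difference: you subtract $n_c$ and bound $\|n-n_c\|_{L^2}^2$ by $C\widetilde E_0$, whereas the paper simply uses $n\leq \bar n$ to write $\bigl|\int_{\T^3} n(v-v_c)\,dx\bigr|^2 \leq n_c\,\bar n \int_{\T^3}|v-v_c|^2\,dx$ and then applies Poincar\'e---so the zero-mean trick is not actually needed, but your route is equally valid.
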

\begin{proof}Using the H\"older's inequality, we easily get
\[
|m_c| \leq \frac{1}{\rho_c}\intt \rho|u|\,dx \leq \frac{1}{\sqrt{\rho_c}}\lt(\intt \rho|u|^2\,dx\rt)^{1/2} \leq C\wt E_0^{1/2},
\]
due to Remark \ref{rmk_eqvv}, and similarly we obtain $|j_c|^2 \leq C\wt E_0$.

We also notice from \eqref{est_tm} that
\[
|\rho_c \,m_c^\prime|^2 = |n_c \,j_c^\prime|^2 = \lt| \intt \rho (u - v)\,dx\rt|^2 \leq \rho_c \intt \rho|u-v|^2dx,
\]
and this implies
\[
|m_c^\prime|^2 + |j_c^\prime|^2 \leq \lt(\frac{1}{\rho_c} + \frac{\rho_c}{n_c^2} \rt)\intt \rho |u-v|^2 dx.
\]
For the proof of $(ii)$, we use the Sobolev inequality to find
\[
|v_c - j_c|^2 = \frac{1}{n_c^2}\lt|\intt n(v - v_c)\,dx\rt|^2 \leq \frac{\bar n}{n_c}\intt |v - v_c|^2 dx \leq \frac{C\bar n}{n_c}\intt |\nabla v|^2 dx.
\]
This concludes the desired results.
\end{proof}

We provide the lower bound estimate of the dissipation term $\md$. In fact, our proposed Lyapunov functional $\ml$ without the evolution functions of the densities is bounded from above by the dissipation term $\md$. From this observation, we will consider a perturbed energy functional $\me^\sigg$ to have the dissipation rates for the densities  $\rho$ and $n$ later (see \eqref{f_per}).

\begin{lemma}\label{lem_gg}There exists a positive constant $C$ independent of $t$ such that 
\[
\ml_-(t) \leq C\md(t) \quad \mbox{for} \quad t\geq 0,
\]
where $\ml_-$ is given by
\[
\ml_- := \ml - \intt (\rho - \rho_c)^2\,dx - \intt (n-n_c)^2\,dx.
\]
\end{lemma}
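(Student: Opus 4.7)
The plan is to estimate each of the three terms comprising $\ml_-$---namely $\intt\rho|u-m_c|^2\,dx$, $\intt n|v-j_c|^2\,dx$, and $|m_c-j_c|^2$---separately by $C\md(t)$. The unifying idea is to use the unweighted velocity average $v_c := \intt v\,dx$ from Lemma~\ref{lem_mom} as a bridge between the two weighted means $m_c$ and $j_c$, combined with the Poincar\'e inequality $\intt|v-v_c|^2\,dx \le C\intt|\nabla v|^2\,dx$ on $\T^3$ and the uniform bounds $\rho\le\bar\rho$, $n\le\bar n$.

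For $\intt\rho|u-m_c|^2\,dx$ I will exploit the variational characterization of the weighted mean: since $m_c$ minimizes $a\mapsto\intt\rho|u-a|^2\,dx$, taking $a=v_c$ and splitting $u-v_c=(u-v)+(v-v_c)$ gives
\[
\intt\rho|u-m_c|^2\,dx\le 2\intt\rho|u-v|^2\,dx + 2\bar\rho\intt|v-v_c|^2\,dx\le C\md(t).
\]
The same minimization trick applied to the $n$-weighted mean $j_c$ yields $\intt n|v-j_c|^2\,dx\le \intt n|v-v_c|^2\,dx\le \bar n\intt|v-v_c|^2\,dx\le C\intt|\nabla v|^2\,dx\le C\md(t)$.

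For the constant term $|m_c-j_c|^2$, I will insert $v_c$: $|m_c-j_c|^2\le 2|m_c-v_c|^2+2|v_c-j_c|^2$, where the second piece is already controlled by Lemma~\ref{lem_mom}(ii). For the first, using $\rho_c v_c = v_c\intt\rho\,dx$ I write
\[
m_c-v_c=\frac{1}{\rho_c}\intt\rho(u-v)\,dx+\frac{1}{\rho_c}\intt\rho(v-v_c)\,dx,
\]
and then apply Cauchy--Schwarz to each integral together with $\rho\le\bar\rho$ and the Poincar\'e inequality, which produces $|m_c-v_c|^2\le C\md(t)$. Collecting the three bounds finishes the proof.

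The argument itself is essentially mechanical once the right decomposition is fixed; the main conceptual point is that the whole approach rests on the Poincar\'e inequality for mean-zero functions on $\T^3$. This is precisely the reason the author flags afterwards for why the method does not carry over to $\om=\R^3$: without Poincar\'e one loses control of $v-v_c$ by $\|\nabla v\|_{L^2}$, and the bridge construction between $m_c$ and $j_c$ collapses.
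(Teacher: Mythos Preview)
Your proof is correct and reaches the same conclusion via a slightly different decomposition than the paper. The paper works in the opposite direction: it expands the dissipation term $\intt\rho|u-v|^2\,dx$ by inserting $m_c$ and $j_c$ as $u-v=(u-m_c)+(m_c-j_c)+(j_c-v)$, obtaining a lower bound
\[
\intt\rho|u-v|^2\,dx\ \ge\ \frac12\intt\rho|u-m_c|^2\,dx+\frac{\rho_c}{2}|m_c-j_c|^2-3\intt\rho|v-j_c|^2\,dx,
\]
and then absorbs the negative remainder $\intt\rho|v-j_c|^2\,dx$ into $\intt|\nabla v|^2\,dx$ via $v_c$ and Lemma~\ref{lem_mom}. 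You instead bound each piece of $\ml_-$ from above by $\md$ directly, using the variational characterization that $m_c$ (resp.\ $j_c$) minimizes $a\mapsto\intt\rho|u-a|^2\,dx$ (resp.\ $a\mapsto\intt n|v-a|^2\,dx$). Both arguments pivot on the same bridge variable $v_c$, the Poincar\'e inequality on $\T^3$, and Lemma~\ref{lem_mom}(ii); your route is somewhat more modular and avoids the algebraic expansion with cross terms, while the paper's expansion makes the appearance of the three Lyapunov pieces inside the single dissipation integral $\intt\rho|u-v|^2\,dx$ more explicit. Your treatment of $\intt n|v-j_c|^2\,dx$ is essentially identical to the paper's.
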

\begin{proof}Using the standard interpolation technique and Young's inequality, we obtain
\begin{align*}
\begin{aligned}
\int_{\T^3} \rho|u-v|^2 dx &= \int_{\T^3} \rho|u-m_c + m_c - j_c + j_c - v|^2 dx\cr
&= \int_{\T^3} \rho|u-m_c|^2 dx + \rho_c|m_c - j_c|^2 + \int_{\T^3} \rho|v-j_c|^2 dx\cr
&\quad + 2\int_{\T^3} \rho(m_c - j_c)\cdot (j_c - v) dx + 2\int_{\T^3} \rho (u - m_c) \cdot (j_c - v) dx\cr
&\geq \frac12\int_{\T^3} \rho|u-m_c|^2 dx + \frac{\rho_c}{2}|m_c - j_c|^2 - 3\int_{\T^3} \rho |v - j_c|^2 dx.
\end{aligned}
\end{align*}
On the other hand, the negative term in right hand side of the above inequality can be estimated as
\[
\intt \rho | v - j_c|^2\,dx \leq 2\bar \rho\intt |v - v_c|^2\,dx + 2\rho_c|v_c - j_c|^2 \leq C\intt |\nabla v|^2 \,dx,
\]
due to Lemma \ref{lem_mom}. Then this yields
\begin{align}\label{lem:equiv2}
\begin{aligned}
\frac12\lt( \int_{\T^3} \rho|u - m_c|^2 dx + \rho_c |m_c - j_c|^2\rt) 
&\leq \int_{\T^3} \rho |u-v|^2 dx + C\int_{\T^3} |\nabla v|^2 dx.
\end{aligned}
\end{align}
We next estimate 
\begin{align}\label{lem:equiv1}
\begin{aligned}
\frac12\int_{\T^3} n|v - j_c|^2 dx &\leq \int_{\T^3} n|v-v_c|^2 dx + n_c|v_c - j_c|^2 \leq C\int_{\T^3} |\nabla v|^2 dx,
\end{aligned}
\end{align}
where we used Lemma \ref{lem_mom} again.

Combine the estimates \eqref{lem:equiv1} and \eqref{lem:equiv2}, we finally conclude
\begin{align*}
\begin{aligned}
& \frac12\lt( \int_{\T^3} \rho |u - m_c|^2 dx + \rho_c |m_c - j_c|^2 + \int_{\T^3} (n+1)|v - j_c|^2 dx\rt)\cr
& \qquad \leq \int_{\T^3} \rho |u-v|^2 dx + C\int_{\T^3} |\nabla v|^2 dx,
\end{aligned}
\end{align*}
that is,
\[
\mathcal{L}_-(t) \leq C\mathcal{D}(t), \quad \mbox{for} \quad t \geq 0,
\]
where $C$ is a positive constant independent of $t$.
\end{proof}
We next focus on finding the desired dissipation rates for the densities $\rho$ and $n$. Inspired by the recent works \cite{Choi1, Choi2, CK}, we provide a type of Bogovskii's result in the periodic domain which is obtained from the estimate of elliptic regularity for Poisson's equations. For more details, we refer to \cite{Choi2, Gal}.
\begin{lemma}\label{lem_bogo}Given any $f \in L^2_{\#}(\T^3) := \Big\{ f \in L^2(\T^3) | \int_{\T^3} f dx = 0
\Big\}$, the following stationary transport equation with auxiliary equations
\bq\label{tran_eq}
\nabla \cdot \nu = f, \quad \nabla \times \nu = 0, \quad \mbox{and} \quad \int_{\T^3} \nu\,dx = 0,
\eq
admit a solution operator $\mb: f \mapsto \nu$ satisfying the following properties:
\begin{enumerate}
\item[(i)]  $\nu = \mb[f]$ is a solution to the problem \eqref{tran_eq} and a linear operator from $L^2_{\#}(\T^3)$ into $H^1(\T^3)$, i.e.,
\[
\|\mathcal{B}[f] \|_{H^1} \leq C^*\| f \|_{L^{2}}.
\]

\item[(ii)]  If a function $f \in H^1(\T^3)$ can be written in the form
$f = \nabla \cdot g$ with $g \in [H^1(\T^3)]^3$,
    then
    \[
    \| \mathcal{B}[f] \|_{L^{2}} \leq C^*\|g\|_{L^{2}}.
    \]
\end{enumerate}
\end{lemma}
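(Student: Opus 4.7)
My plan is to construct $\mathcal{B}$ explicitly via the scalar potential associated to a Poisson equation on $\T^3$, and then read off both estimates from standard elliptic theory.

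First I would define the operator. Given $f \in L^2_\#(\T^3)$, the scalar Poisson problem
\[
\Delta \phi = f, \qquad \intt \phi\,dx = 0,
\]
admits a unique solution $\phi \in H^2(\T^3)$. The quickest way to see this is via Fourier series: writing $f = \sum_{k \in \Z^3\setminus\{0\}} \hat f(k) e^{2\pi i k\cdot x}$ (the zero mode vanishes since $f \in L^2_\#$), one sets $\hat\phi(k) := -\hat f(k)/(4\pi^2|k|^2)$ for $k\neq 0$ and $\hat\phi(0) = 0$; the resulting series converges in $H^2(\T^3)$ with $\|\phi\|_{H^2} \leq C\|f\|_{L^2}$. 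Then I would define $\mathcal{B}[f] := \nabla\phi$, which is clearly linear in $f$.

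Next I verify that $\nu = \mathcal{B}[f]$ solves \eqref{tran_eq}. By construction $\nabla\cdot\nu = \Delta\phi = f$, while $\nabla\times\nu = \nabla\times\nabla\phi = 0$ identically. The mean-zero condition $\intt\nu\,dx = \intt \nabla\phi\,dx = 0$ follows directly from the periodicity of $\phi$. For the bound in (i), I simply chain $\|\nu\|_{H^1} = \|\nabla\phi\|_{H^1} \leq \|\phi\|_{H^2} \leq C^*\|f\|_{L^2}$ using the elliptic estimate above.

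For (ii), suppose $f = \nabla\cdot g$ with $g \in [H^1(\T^3)]^3$; note that $f$ automatically has mean zero by periodicity, so $\phi$ is well defined. Testing the equation $\Delta\phi = \nabla\cdot g$ against $\phi$ and integrating by parts on $\T^3$ (no boundary terms) yields
\[
\intt |\nabla\phi|^2\,dx = -\intt \phi\,\nabla\cdot g\,dx = \intt \nabla\phi \cdot g\,dx \leq \|\nabla\phi\|_{L^2}\|g\|_{L^2},
\]
so $\|\mathcal{B}[f]\|_{L^2} = \|\nabla\phi\|_{L^2} \leq \|g\|_{L^2}$, giving (ii) with $C^* = 1$ (or the maximum of the two constants if one prefers a single constant in the statement).

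There is no real obstacle here; the only subtle point is making sure the Poisson equation is solvable, which is exactly why the hypothesis $f \in L^2_\#(\T^3)$ (i.e., the Fredholm compatibility condition $\intt f\,dx = 0$) is imposed. Since the construction via the scalar potential $\nabla\phi$ gives a curl-free vector field automatically, the overdetermined system \eqref{tran_eq} is in fact the right system for this reduction, and no delicate Bogovskii-type construction (which would be needed if one allowed $\nabla\times\nu \neq 0$) is required.
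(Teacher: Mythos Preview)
Your proof is correct and follows exactly the approach the paper indicates: the paper does not write out a proof but states that the result ``is obtained from the estimate of elliptic regularity for Poisson's equations'' and refers to \cite{Choi2, Gal}, and your construction $\mathcal{B}[f]=\nabla\phi$ with $\Delta\phi=f$, $\intt\phi\,dx=0$ is precisely this. Both the $H^1$ bound via $\|\phi\|_{H^2}\leq C\|f\|_{L^2}$ and the $L^2$ bound via testing against $\phi$ are the standard arguments one would find in those references.
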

Using the operator $\mb$, we set a perturbed function $\mathcal{E}^{\sigma_1,\sigma_2}(t)$:
\bq\label{f_per}
\me^\sigg := \me - \sigma_1 \intt \rho(u - m_c)\cdot \mb [\rho - \rho_c]\,dx - \sigma_2 \intt n(v - j_c)\cdot \mb[n - n_c]\,dx,
\eq
where $\sigma_1, \sigma_2 > 0$ will be appropriately determined later.
\begin{lemma}The perturbed function $\me^\sigg$ defined in \eqref{f_per} satisfies
\[
\frac{d}{dt}\me^\sigg(t) + \md^\sigg(t) = 0,
\]
where $\md^\sigg$ is given by
$$\begin{aligned}
\md^\sigg &= \md + \sigma_1\lt( \intt \rho u \otimes u : \nabla \mb[\rho - \rho_c]\,dx - \intt \rho(u-v) \cdot \mb[\rho - \rho_c]\,dx + \intt (\rho - \rho_c)^2\,dx\rt)\cr
&\quad -\sigma_1\lt( \intt \rho (u - m_c)\cdot \mb[\nabla \cdot (\rho u)]\,dx + \intt \pa_t(\rho m_c) \cdot \mb[\rho - \rho_c]\,dx\rt)\cr
&\quad + \sigma_2 \lt( \intt nv \otimes v : \nabla \mb[n - n_c]\,dx + \intt \rho(u-v) \cdot \mb[n - n_c]\,dx + \intt (n^\gamma - n_c^\gamma)(n - n_c)\,dx\rt)\cr
&\quad -\sigma_2 \lt( \intt n(v - j_c)\cdot \mb[\nabla \cdot(nv)]\,dx + \mu\intt \nabla v: \nabla \mb[n - n_c]\,dx + (\mu + \lambda)\intt (\nabla \cdot v)n\,dx\rt)\cr
&\quad -\sigma_2\intt \pa_t(vj_c)\cdot \mb[n - n_c]\,dx,
\end{aligned}$$
where $A:B := \sum_{i=1}^m \sum_{j=1}^n a_{ij}b_{ij}$ for $A = (a_{ij}), B = (b_{ij}) \in \R^{mn}$.
\end{lemma}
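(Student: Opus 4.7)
The plan is to differentiate $\me^\sigg$ term by term. Since the previous lemma gives $\frac{d}{dt}\me + \md = 0$, I only need to compute the time derivatives of the two perturbation terms
\[
I_1 := \intt \rho(u - m_c)\cdot \mb[\rho - \rho_c]\,dx, \qquad I_2 := \intt n(v - j_c)\cdot \mb[n - n_c]\,dx,
\]
and show that $\sigma_1 \frac{d}{dt}I_1 + \sigma_2 \frac{d}{dt}I_2$ equals the stated difference $\md^\sigg - \md$. The basic tools are the system \eqref{f_eq}, the linearity of $\mb$ together with the identity $\nabla \cdot \mb[f] = f$ from Lemma \ref{lem_bogo}, integration by parts on $\T^3$ (no boundary terms), and the mass-conservation identity $\intt (\rho - \rho_c)\,dx = \intt (n - n_c)\,dx = 0$ (which also yields $\intt \nabla \cdot v\,dx = 0$).

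For $I_1$, since $\rho_c$ is constant in time, the product rule gives
\[
\tfrac{d}{dt}I_1 = \intt \pa_t(\rho u)\cdot \mb[\rho - \rho_c]\,dx - \intt \pa_t(\rho m_c)\cdot \mb[\rho - \rho_c]\,dx + \intt \rho(u - m_c)\cdot \mb[\pa_t\rho]\,dx.
\]
Into the first integral I would substitute the momentum equation $\pa_t(\rho u) = -\nabla\cdot(\rho u\otimes u) - \nabla\rho - \rho(u-v)$ and integrate by parts. The convective term becomes $\intt \rho u\otimes u : \nabla\mb[\rho-\rho_c]\,dx$; the pressure term becomes $\intt \rho\,\nabla\cdot\mb[\rho-\rho_c]\,dx = \intt \rho(\rho-\rho_c)\,dx = \intt (\rho-\rho_c)^2\,dx$ after using $\intt(\rho-\rho_c)\,dx = 0$; and the drag term leaves $-\intt \rho(u-v)\cdot\mb[\rho-\rho_c]\,dx$. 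The third integral is rewritten using the continuity equation $\pa_t\rho = -\nabla\cdot(\rho u)$ to get $-\intt \rho(u-m_c)\cdot \mb[\nabla\cdot(\rho u)]\,dx$. Summing and multiplying by $\sigma_1$ reproduces precisely the $\sigma_1$-contribution stated in $\md^\sigg$.

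For $I_2$ the argument is entirely analogous but with the pressure $p(n) = n^\gamma$ and the Lamé operator $L$. Using $\pa_t(nv) = -\nabla\cdot(nv\otimes v) - \nabla p(n) - Lv + \rho(u-v)$ and integrating by parts, the convective part gives $\intt nv\otimes v:\nabla\mb[n-n_c]\,dx$; the pressure part gives $\intt p(n)(n-n_c)\,dx = \intt (n^\gamma - n_c^\gamma)(n-n_c)\,dx$ (using $\intt(n-n_c)\,dx=0$); the drag part produces $\intt \rho(u-v)\cdot\mb[n-n_c]\,dx$; and the viscous piece splits as
\[
-\intt Lv\cdot\mb[n-n_c]\,dx = -\mu\intt \nabla v:\nabla\mb[n-n_c]\,dx - (\mu+\lambda)\intt (\nabla\cdot v)(n-n_c)\,dx,
\]
where the $(n-n_c)$ in the last term can be replaced by $n$ since $\intt \nabla\cdot v\,dx = 0$. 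The remaining pieces $-\intt \pa_t(nj_c)\cdot\mb[n-n_c]\,dx$ and $-\intt n(v-j_c)\cdot\mb[\nabla\cdot(nv)]\,dx$ (from $\pa_t n = -\nabla\cdot(nv)$) are left as is, matching the stated form.

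The arithmetic is routine but voluminous, so I expect the main obstacle to be clean bookkeeping of signs and factors and the careful invocation of the zero-mean identities at exactly the right moments (these convert the raw pressure and viscous boundary contributions into the useful coercive pieces $\intt(\rho-\rho_c)^2\,dx$, $\intt(n^\gamma-n_c^\gamma)(n-n_c)\,dx$, and $-(\mu+\lambda)\intt(\nabla\cdot v)n\,dx$). A small subtlety to flag is that the term written as $\pa_t(vj_c)$ in the statement should, by the product rule applied to $n(v-j_c) = nv - nj_c$, be $\pa_t(nj_c)$; I would note this as an evident typographical point rather than a substantive one.
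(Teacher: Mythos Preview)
Your proposal is correct and follows essentially the same route as the paper: the paper also splits $\tfrac{d}{dt}\intt \rho(u-m_c)\cdot\mb[\rho-\rho_c]\,dx$ into the three pieces $\intt\pa_t(\rho u)\cdot\mb[\rho-\rho_c]\,dx$, $\intt\rho(u-m_c)\cdot\mb[\pa_t\rho]\,dx$, and $-\intt\pa_t(\rho m_c)\cdot\mb[\rho-\rho_c]\,dx$, treats the first via the momentum equation and integration by parts, the second via the continuity equation, and then says the $n$-term is handled ``by using the almost same argument.'' Your observation that $\pa_t(vj_c)$ is a typographical slip for $\pa_t(nj_c)$ is also correct and is confirmed by the paper's later estimate of the corresponding term $J_{13}$, which is treated in exact analogy with $J_8 = -\sigma_1\intt\pa_t(\rho m_c)\cdot\mb[\rho-\rho_c]\,dx$.
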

\begin{proof}
We first divide the second term in the right hand side of the equation \eqref{f_per} into three parts:
$$\begin{aligned}
\frac{d}{dt}\intt \rho(u - m_c)\cdot \mb[\rho - \rho_c]\,dx &= \intt \pa_t(\rho u) \cdot \mb[\rho - \rho_c]\,dx + \intt \rho(u - m_c)\cdot \mb[\pa_t \rho]\,dx\cr
&\quad - \intt \pa_t (\rho m_c)\cdot\mb[\rho - \rho_c]\,dx\cr
&=: \sum_{i=1}^3 I_i,
\end{aligned}$$
where $I_1$ is estimated as 
$$\begin{aligned}
I_1 &= -\intt \lt(\nabla \cdot (\rho u \otimes u) + \rho(u-v) + \nabla (\rho - \rho_c) \rt)\cdot \mb[\rho - \rho_c]\,dx\cr
&= \intt \rho u \otimes u : \mb[\rho - \rho_c]\,dx - \intt \rho(u-v)\cdot \mb[\rho - \rho_c]\,dx + \intt (\rho - \rho_c)^2\,dx,
\end{aligned}$$
due to the definition of the operator $\mb$. The estimate of $I_2$ is easily obtained by the continuity equation $\eqref{f_eq}_1$. By using the almost same argument as the above, we also find the desired estimates of the last term in the right hand side of the equation \eqref{f_per}. This concludes the proof.
\end{proof}

Note that for $C^* > 0$ which is appeared in Lemma \ref{lem_bogo} we obtain
\[
\sigma_1\lt|\intt \rho (u - m_c) \cdot \mb[\rho - \rho_c]\,dx \rt| \leq \sigma_1\lt(\frac{\bar\rho}{2}\intt \rho|u - m_c|^2\,dx + \frac{C^*}{2}\intt (\rho - \rho_c)^2\,dx \rt),
\]
and
\[
\sigma_2\lt|\intt n (v - j_c) \cdot \mb[n - n_c]\,dx \rt| \leq \sigma_1\lt(\frac{\bar n}{2}\intt n|v - j_c|^2\,dx + \frac{C^*}{2}\intt (n - n_c)^2\,dx \rt).
\]
This and together with Remark \ref{rmk_eqvv} deduces
\bq\label{g_ob}
\me^\sigg(t) \approx \ml(t) \quad t \geq 0,
\eq
for some $\sigma_1, \sigma_2 > 0$ small enough. 
We now show that the dissipation term $\md^{\sigma_1,\sigma_2}$ is bounded from below by the Lyapunov function $\ml$ from which we deduce the exponential decay of $\ml$.
\begin{proof}[Proof of Theorem \ref{thm_large}] For the proof, we claim that there exists a positive constant $C$ such that
\bq\label{claim}
\ml(t) \leq C\md^\sigg(t) \quad t \geq 0,
\eq
for $\sigma_1, \sigma_2 > 0$ small enough. We notice that if the inequality \eqref{claim} holds, then it follows from \eqref{g_ob} that 
\[
\frac{d}{dt}\me^\sigg(t) + C\me^\sigg(t) \leq 0 \quad \mbox{for some} \quad C>0,
\]
and this concludes
\[
\ml(t) \ls \me^\sigg(t) \ls \me^\sigg_0 e^{-Ct} \ls \ml_0 e^{-Ct} \quad \mbox{for} \quad t \geq 0.
\]
{\bf Proof of claim:} We rewrite $\md^\sigg(t)$ as the summation of $J_i,i=1,\cdots, 15$:
$$\begin{aligned}
\md^\sigg &= \mu\intt |\nabla v|^2\,dx + (\mu + \lambda)\intt |\nabla \cdot v|^2\,dx + \intt \rho|u - v|^2\,dx\cr
&\quad + \sigma_1\lt( \intt \rho u \otimes u : \nabla \mb[\rho - \rho_c]\,dx - \intt \rho(u-v) \cdot \mb[\rho - \rho_c]\,dx + \intt (\rho - \rho_c)^2\,dx\rt)\cr
&\quad -\sigma_1\lt( \intt \rho (u - m_c)\cdot \mb[\nabla \cdot (\rho u)]\,dx + \intt \pa_t(\rho m_c) \cdot \mb[\rho - \rho_c]\,dx\rt)\cr
&\quad + \sigma_2 \lt( \intt nv \otimes v : \nabla \mb[n - n_c]\,dx + \intt \rho(u-v) \cdot \mb[n - n_c]\,dx + \intt (n^\gamma - n_c^\gamma)(n - n_c)\,dx\rt)\cr
&\quad -\sigma_2 \lt( \intt n(v - j_c)\cdot \mb[\nabla \cdot(nv)]\,dx +\intt \pa_t(vj_c)\cdot \mb[n - n_c]\,dx+ \mu\intt \nabla v: \nabla \mb[n - n_c]\,dx \rt)\cr
&\quad -\sigma_2(\mu + \lambda)\intt (\nabla \cdot v)(n - n_c)\,dx\cr
&=:\sum_{i=1}^{15}J_i.
\end{aligned}$$
We next estimate each term as follows separately. \newline

\noindent $\diamond$ Estimate of $J_4$: We decompose $J_4$ into three parts as
$$\begin{aligned}
J_4 &= \sigma_1\intt \rho (u - m_c)\otimes u : \nabla \mb[\rho - \rho_c]\,dx + \sigma_1\intt \rho m_c \otimes (u - m_c) : \nabla \mb[\rho - \rho_c]\,dx \cr
&\quad + \sigma_1 \intt (\rho - \rho_c)m_c\otimes m_c : \nabla \mb[\rho - \rho_c]\,dx\cr
&=: \sum_{i=1}^3 J_4^i.
\end{aligned}$$
Then we make use of Lemma \ref{lem_bogo} to get
$$\begin{aligned}
J_4^1 &\leq \frac{\sigma_1^{1/2}\bar\rho \|u\|_{L^\infty}}{2}\intt \rho|u-m_c|^2\,dx + C^* \sigma_1^{3/2}\intt (\rho - \rho_c)^2\,dx,\cr
J_4^2 &\leq C\sigma_1^{1/2}\bar\rho \wt E_0\intt \rho |u - m_c|^2\,dx + C^* \sigma_1^{3/2}\intt (\rho - \rho_c)^2\,dx,\cr
J_4^3 &\leq C^*\sigma_1 \wt E_0 \intt (\rho - \rho_c)^2\,dx,
\end{aligned}$$
where $\wt E_0$ is defined in Remark \ref{rmk_eqvv}.
Thus we obtain
\[
J_4 \leq C\sigma_1^{1/2}\intt \rho |u - m_c|^2\,dx + C\sigma_1\lt( \wt E_0 + \sigma_1^{1/2}\rt)\intt (\rho - \rho_c)^2\,dx,
\]
where $C$ is a positive constant independent of $\sigma_1$ and $t$. \newline

\noindent $\diamond$ Estimate of $J_5$: It is a straightforward to get
\[
J_5 \leq \frac14\intt \rho |u -v|^2\,dx + C^*\sigma_1^2 \bar \rho \intt (\rho - \rho_c)^2\,dx.
\]
$\diamond$ Estimate of $J_7$: By adding and subtracting together with Lemma \ref{lem_bogo}, we get
$$\begin{aligned}
J_7 &= -\sigma_1\intt \rho(u - m_c)\cdot \mb[\nabla \cdot (\rho(u - m_c))]\,dx - \sigma_1\intt \rho(u - m_c)\cdot \mb[\nabla \cdot ((\rho - \rho_c)m_c)]\,dx\cr
&\leq C^* \sigma_1 \bar \rho \intt \rho|u - m_c|^2\,dx + \frac{\sigma_1^{1/2}\bar\rho}{2}\intt \rho |u - m_c|^2\,dx + \frac{C^*\sigma_1^{3/2}|m_c|^2}{2}\intt (\rho - \rho_c)^2\,dx\cr
&\leq C\sigma_1^{1/2}\intt \rho|u - m_c|^2\,dx + C\sigma_1^{3/2}\intt (\rho - \rho_c)^2\,dx,
\end{aligned}$$
where $C$ is a positive constant independent of $\sigma_1$ and $t$. \newline

\noindent $\diamond$ Estimate of $J_8$: It follows from the continuity equation $\eqref{main_eq}_1$ that
\[
\pa_t (\rho m_c) = -\nabla \cdot (\rho u) m_c + \rho m_c^\prime.
\]
Then by using the integration by parts, we divide $J_8$ into two parts as
$$\begin{aligned}
J_8 &= -\sigma_1\intt m_c \cdot (\rho u \cdot \nabla)\mb[\rho - \rho_c]\,dx - \sigma_1\intt \rho m_c^\prime \cdot \mb[\rho - \rho_c]\,dx\cr
&=: J_8^1 + J_8^2.
\end{aligned}$$
Here $J_8^i,i=1,2$ are estimated as follows.
$$\begin{aligned}
J_8^1 &= -\sigma_1 \intt m_c \cdot (\rho (u - m_c)\cdot \nabla) \mb[\rho - \rho_c]\,dx - \sigma_1 \intt m_c \cdot ((\rho - \rho_c)m_c \cdot \nabla)\mb[\rho - \rho_c]\,dx\cr
&\leq C\sigma_1^{1/2}\bar\rho \wt E_0 \intt \rho|u-m_c|^2\,dx + C^* \sigma_1^{3/2}\intt (\rho - \rho_c)^2\,dx + C C^*\sigma_1\wt E_0 \intt (\rho - \rho_c)^2\,dx\cr
&= C\sigma_1^{1/2}\bar\rho \wt E_0\intt \rho|u - m_c|^2\,dx + C^*\sigma_1\lt( C\wt E_0 + \sigma_1^{1/2}\rt)\intt (\rho - \rho_c)^2\,dx,\cr
J_8^2 &\leq \frac{\sigma_1^{1/2}\bar\rho^2}{2}|m_c^\prime|^2 + C^*\sigma_1^{3/2}\intt (\rho - \rho_c)^2\,dx,
\end{aligned}$$
due to Lemma \ref{lem_bogo}. This and together with Lemma \ref{lem_mom} yields
\[
J_8 \leq C\sigma_1^{1/2}\intt \rho |u - m_c|^2\,dx + C\sigma_1\lt(\wt E_0 + \sigma_1^{1/2} \rt)\intt (\rho - \rho_c)^2\,dx + C\sigma_1^{1/2}\intt \rho |u - v|^2\,dx,
\]
where $C$ is a positive constant independent of $\sigma_1$ and $t$. \newline

\noindent $\diamond$ Estimate of $J_i, i=9,10,12,13$: Using similar strategies as the above, we deduce
$$\begin{aligned}
J_9 &\leq C\sigma_2^{1/2}\intt n |v - j_c|^2\,dx + C\sigma_2\lt( \wt E_0 + \sigma_2^{1/2}\rt)\intt (n - n_c)^2\,dx,\cr
J_{10} &\leq \frac14\intt \rho |u - v|^2\,dx + C^* \sigma_2^2 \bar \rho \intt (n - n_c)^2\,dx,\cr
J_{12} &\leq C\sigma_2^{1/2}\intt n|v - j_c|^2\,dx + C\sigma_2^{3/2}\intt (n - n_c)^2\,dx,\cr
J_{13} &\leq C\sigma_2^{1/2}\intt n |v - j_c|^2\,dx + C\sigma_2\lt(\wt E_0 + \sigma_2^{1/2} \rt)\intt (n - n_c)^2\,dx + C\sigma_2^{1/2}\intt \rho |u - v|^2\,dx,
\end{aligned}$$
where $C$ is a positive constant independent of $\sigma_2$ and $t$. \newline

\noindent $\diamond$ Estimate of $J_i,i=11,14,15$: We easily estimate as
$$\begin{aligned}
J_{11} &\geq C\sigma_2\intt (n - n_c)^2\,dx,\cr
J_{14} &\leq \frac{\mu}{2}\intt |\nabla v|^2\,dx + C^*\sigma_2^2 \mu \intt (n - n_c)^2\,dx,\cr
J_{15} &\leq \frac{\mu + \lambda}{2}\intt |\nabla \cdot v|^2\,dx + C^*\sigma_2^2(\mu + \lambda)\intt (n - n_c)^2\,dx,
\end{aligned}$$
where $C$ is a positive constant independent of $\sigma_2$ and $t$. \newline

We now combine all of the above estimates to find
$$\begin{aligned}
\md^\sigg(t) &\geq \frac{\mu}{2}\intt |\nabla v|^2\,dx + \lt(\frac12 - C\lt(\sigma_1^{1/2} + \sigma_2^{1/2}\rt) \rt)\intt \rho | u-v|^2\,dx\cr
&\quad + \sigma_1\lt(1 - C\lt( \wt E_0 + \sigma_1^{1/2} +\sigma_1\rt) \rt)\intt (\rho - \rho_c)^2\,dx \cr
&\quad + C\sigma_2\lt(1 - C\lt( \wt E_0 + \sigma_2^{1/2} +\sigma_2\rt) \rt)\intt (n - n_c)^2\,dx.
\end{aligned}$$
For the sake of simplicity, we set 
\[
C_1:= \frac12 - C\lt(\sigma_1^{1/2} + \sigma_2^{1/2}\rt), \quad C_2:= \sigma_1\lt(1 - C\lt( \wt E_0 + \sigma_1^{1/2} +\sigma_1\rt) \rt),
\]
and
\[
C_3:= C\sigma_2\lt(1 - C\lt( \wt E_0 + \sigma_2^{1/2} +\sigma_2\rt) \rt).
\]
We now choose $\sigma_1, \sigma_2 > 0$, and $\wt E_0$ small enough so that the constants $C_i > 0$ for all $i= 1,2,3$. Finally we use the inequality obtained in Lemma \ref{lem_gg} to have
$$\begin{aligned}
&\md^\sigg(t) \cr
&\quad \geq \frac{\mu}{2}\intt |\nabla v|^2\,dx + C_1\intt \rho|u - v|^2\,dx + C_2\intt (\rho - \rho_c)^2\,dx + C_3\intt (n - n_c)^2\,dx\cr
&\quad \geq \min\lt\{\frac\mu2, C_1\rt\}\lt(\intt |\nabla v|^2\,dx + \intt \rho|u - v|^2\,dx\rt) + C_2\intt (\rho - \rho_c)^2\,dx + C_3\intt (n - n_c)^2\,dx\cr
&\quad \geq \min\lt\{\frac\mu2, C_1\rt\}\ml_-(t)+ C_2\intt (\rho - \rho_c)^2\,dx + C_3\intt (n - n_c)^2\,dx\cr
&\quad \geq \min\lt\{\min\lt\{\frac\mu2, C_1\rt\}, C_2, C_3 \rt\}\ml(t).
\end{aligned}$$
This completes the proof of claim.
\end{proof}

%
%
%
%

\appendix
\section{Proof of Lemma \ref{lem_n} }\label{app_a}
In this part, we provide the details of the proof of Lemma \ref{lem_n}. We reminder the reader that the similar strategy is used in \cite{CK}, but the estimates in \cite{CK} can not be directly applied to the whole space case.

For $1 \leq k \leq s$, it follows from \eqref{main_eq} that
\begin{align*}
\begin{aligned}
&\frac{d}{dt}\int_{\om} \nabla^k n \cdot \lt( \frac{\nabla^k n}{2} + \frac{(n+1)^2}{2\mu + \lambda} \nabla^{k-1} v\rt) dx\cr
&\quad = \int_{\om} \nabla^k n_t \cdot \nabla^k n \,dx + \int_{\om} \frac{(1+n)^2}{2\mu + \lambda}\nabla^k n_t \cdot \nabla^{k-1}v\,dx\cr
&\qquad  + \int_{\om} \frac{(1+n)^2}{2\mu + \lambda}\nabla^k n \cdot \nabla^{k-1} v_t \,dx + \int_{\om} \nabla^{k-1}n \cdot \lt( \frac{2(1+n)}{2\mu + \lambda}n_t\nabla^{k-1}v \rt)dx\cr
&\quad =: \sum_{i=1}^4 I_i.
\end{aligned}
\end{align*}
$\diamond$ Estimate of $I_1$: A straightforward computation yields that
\begin{align}\label{est_n0}
\begin{aligned}
I_1 &= -\int_{\om} \nabla^k n \cdot \nabla^k\lt( \nabla n \cdot v + (1+n)\nabla \cdot v\rt)dx\cr
&= - \int_{\om} \nabla^k n \cdot \lt(v \cdot \nabla^{k+1}n\rt)dx - \int_{\om} [\nabla^k,v\cdot \nabla ]n \cdot \nabla^k n \,dx - \int_{\om} \nabla^k n \cdot \nabla^k \lt( (1+n)\nabla \cdot v\rt)dx\cr
&=:\sum_{i=1}^3 I_1^i,
\end{aligned}
\end{align}
where $I_1^i,i=1,2$ are easily estimated as follows.
\begin{align}\label{est_n1}
\begin{aligned}
I_1^1 &\leq \|\nabla \cdot v\|_{L^\infty}\|\nabla^k n\|_{L^2}^2\leq C\epsilon_1\|\nabla^k n\|_{L^2}^2,\cr
I_1^2 &\leq \|[\nabla^k,v \cdot \nabla]n\|_{L^2}\|\nabla^k n\|_{L^2} \leq C\epsilon_1\|\nabla^k n\|_{L^2}^2,
\end{aligned}
\end{align}
due to Lemma \ref{lem:bern}. In the following estimate of $I_1^3$, for notational simplicity, we omit the summation, i.e., $f_ig_i := \sum_{i=1}^3 f_i g_i$. Using the integration by parts, we estimate $I_1^3$ as
\begin{align}\label{est_n2}
\begin{aligned}
I_1^3 &= -\int_{\om}\nabla^{k-1}\pa_i n \lt( \pa_i n \nabla^{k-1}\pa_j v_j + (n+1)\nabla^{k-1}\pa_{ij}v_j\rt)dx\cr
&\quad - \int_{\om} \nabla^{k-1}\pa_i n \lt( (\nabla^{k-1}\pa_i n)\pa_j v_j + \nabla^{k-1}n \pa_{ij}v_j\rt)dx\cr
&\quad - (1- \delta_{k,1})\sum_{1 \leq l \leq k-2}\binom{k-1}{l}\int_{\om} \nabla^{k-1}\pa_i n \lt( \nabla^l \pa_i n \nabla^{k-1-l}\pa_j v_j + \nabla^l n \nabla^{k-1-l}\pa_{ij}v_j\rt)dx\cr
&\leq C\|\nabla^k n\|_{L^2}\|\nabla n\|_{L^\infty}\|\nabla^k v\|_{L^2} +\int_{\T^3}(n+1)\nabla^{k-1}\pa_{ij} n \cdot \nabla^{k-1}\pa_i v_j\,dx\cr
&\quad  + C\|\nabla^k n\|_{L^2}^2\|\nabla v\|_{L^\infty}+ C\|\nabla^{k-1}n\|_{H^1}^2\|\nabla^2 v\|_{H^1}\cr
&\quad + C(1 - \delta_{k,1})\|\nabla^k n\|_{L^2}\lt(\|\nabla^2 n\|_{H^{k-2}}\|\nabla^2 v\|_{H^{k-2}} + \|\nabla n\|_{H^{k-2}}\|\nabla^3 v\|_{H^{k-2}}\rt)\cr
&\leq C\epsilon_1\|\nabla^k (n,v)\|_{L^2}^2 + C\epsilon_1\|\nabla^{k-1} n\|_{L^2}^2 + C\epsilon_1\|\nabla^2 v\|_{H^{k-1}}^2\cr
&\quad+ \int_{\om}(n+1)\nabla^{k-1}\pa_{ij} n \cdot \nabla^{k-1}\pa_i v_j\,dx,
\end{aligned}
\end{align}
where $\delta_{i,j}$ is the Kronecker dirac function and we used the following estimate for the first term in $I_1^3$.
\begin{align*}
\begin{aligned}
&-\int_{\om}\nabla^{k-1}\pa_i n \lt( \pa_i n \nabla^{k-1}\pa_j v_j + (n+1)\nabla^{k-1}\pa_{ij}v_j\rt)dx\cr
&\quad = -\int_{\om} (\nabla^{k-1}\pa_i n)\pa_i n \nabla^{k-1}\pa_j v_j \,dx + \int_{\om} \lt( (\nabla^{k-1}\pa_{ij}n)(n+1) + (\nabla^{k-1}\pa_i n)\pa_j n \rt)\pa_i v_j\,dx\cr
&\quad \leq C\|\nabla^k n\|_{L^2}\|\nabla n\|_{L^\infty}\|\nabla^k v\|_{L^2} + \int_{\om}(n+1)\nabla^{k-1}\pa_{ij} n \cdot \nabla^{k-1}\pa_i v_j\,dx.
\end{aligned}
\end{align*}
Then by plugging the estimates \eqref{est_n1} and \eqref{est_n2} into \eqref{est_n0}, we obtain
\[
I_1 \leq C\epsilon_1\|\nabla^k (n,v)\|_{L^2}^2 + C\epsilon_1\|\nabla^{k-1} n\|_{L^2}^2 + C\epsilon_1\|\nabla^2 v\|_{H^{k-1}}^2+ \int_{\om}(n+1)\nabla^{k-1}\pa_{ij} n \cdot \nabla^{k-1}\pa_i v_j\,dx.
\]
$\diamond$ Estimate of $I_2$: Using the integration by parts, we get
\begin{align*}
\begin{aligned}
I_2 &= -\frac{1}{2\mu + \lambda}\int_{\om} (1+n)^2 \nabla^{k-1}v \cdot \lt( \nabla^k(\nabla n \cdot v) + \nabla^k((1+n)\nabla\cdot v)\rt) dx\cr
&=\frac{1}{2\mu + \lambda}\int_{\om} \lt( 2(1+n)\nabla n \cdot \nabla^{k-1}v + (1+n)^2\nabla^k v\rt)\cdot \nabla^{k-1}(\nabla n \cdot v)\,dx\cr
&\quad +\frac{1}{2\mu + \lambda}\int_{\om} \lt( 2(1+n)\nabla n \cdot \nabla^{k-1}v + (1+n)^2\nabla^k v\rt)\cdot \nabla^{k-1}((1+n)\nabla\cdot v)\, dx\cr
&=:I_2^1 + I_2^2.\cr
\end{aligned}
\end{align*}
Here $I_2^1$ is estimated as 
\begin{align*}
\begin{aligned}
I_2^1 &\leq C(1 - \delta_{k,1})\sum_{1 \leq l \leq k-1}\int_{\om} \lt( |\nabla n||\nabla^{k-1} v| + |\nabla^k v|\rt)|\nabla^l v| |\nabla^{k-l} n|\,dx\cr
&\quad + C\int_{\om} \lt( |\nabla n||\nabla^{k-1}v| + |\nabla^k v| \rt)|v||\nabla^k n|\,dx\cr
&\leq C(1 - \delta_{k,1})\sum_{1 \leq l \leq k-1}\lt( \epsilon_1 \|\nabla^{k-1} v\|_{L^2} + \|\nabla^k v\|_{L^2}\rt)\|\nabla^l v\|_{H^1}\|\nabla^{k-l} n\|_{H^1} \cr
&\quad + C\epsilon_1\lt(\|\nabla^{k-1} v\|_{L^2} + \|\nabla^k v\|_{L^2}\rt)\|\nabla^k n\|_{L^2}\cr
&\leq C\epsilon_1\|\nabla^k v\|_{L^2}^2 +C\epsilon_1\|\nabla^{k-1}v\|_{L^2}^2+ C\epsilon_1\|\nabla n\|_{H^{k-1}}^2.\cr
\end{aligned}
\end{align*}
In a similar way as the above, we find
\begin{align*}
\begin{aligned}
I_2^2 &\leq C(1 - \delta_{k,1})\sum_{1 \leq l \leq k-1}\int_{\om} \lt(|\nabla n||\nabla^{k-1} v| + |\nabla^k v|\rt)|\nabla^l n| |\nabla^{k-l}v|\,dx\cr
&\quad + C\int_{\om} \lt(|\nabla n||\nabla^{k-1} v| + |\nabla^k v|\rt)|1+n| |\nabla^k v|\,dx\cr
&\leq C(1 - \delta_{k,1})\sum_{1 \leq l \leq k-1}\lt(\epsilon_1\|\nabla^{k-1}v\|_{L^2} + \|\nabla^k v\|_{L^2}\rt)\|\nabla^l n\|_{H^1}\|\nabla^{k-l}v\|_{H^1} \cr
&\quad + C\epsilon_1\|\nabla^{k-1} v\|_{L^2}\|\nabla^k v\|_{L^2} + C\|\nabla^k v\|_{L^2}^2\cr
&\leq C\lt(\epsilon_1\|\nabla^{k-1} v\|_{L^2} + \|\nabla^k v\|_{L^2}\rt)\|\nabla v\|_{H^{k-1}}\|\nabla n\|_{H^{k-1}} + C\epsilon_1\|\nabla^{k-1}v\|_{L^2}^2+ C\|\nabla^k v\|_{L^2}^2\cr
&\leq C\epsilon_1\|\nabla^{k-1}v\|_{L^2}^2 + C\epsilon_1\|\nabla n\|_{H^{k-1}}^2+ C\|\nabla^k v\|_{L^2}^2,
\end{aligned}
\end{align*}
 Thus we have
\[
I_2 \leq C\epsilon_1\|\nabla^{k-1}v\|_{L^2}^2 + C\epsilon_1\|\nabla n\|_{H^{k-1}}^2+ C\|\nabla^k v\|_{L^2}^2.
\]
\noindent $\diamond$ Estimate of $I_3$: We split $I_3$ into five parts by using the momentum equations $\eqref{main_eq}_4$:
\begin{align*}
\begin{aligned}
I_3 &= -\frac{1}{2\mu + \lambda}\int_{\om} (1+n)^2\nabla^k n\cdot\lt(v \cdot \nabla^k v - [\nabla^{k-1}, v \cdot \nabla]v - \nabla^{k-1}\lt( \frac{e^h}{1+n}(u-v)\rt) \rt)dx\cr
&\quad -\frac{1}{2\mu + \lambda}\int_{\om}(1+n)^2\nabla^k n\cdot\lt( \nabla^{k-1}\lt( \frac{\nabla p(n+1)}{n+1}\rt) + \nabla^{k-1}\lt( \frac{Lv}{n+1}\rt) \rt)dx\cr
&=:\sum_{i=1}^5 I_3^i,
\end{aligned}
\end{align*}
where $I_3^i,i=1,2,3$ are estimated as follows.
\begin{align}\label{est_i1}
\begin{aligned}
I_3^1 &\ls \|v\|_{L^\infty}\|\nabla^k n\|_{L^2}\|\nabla^k v\|_{L^2} \leq C\epsilon_1\|\nabla^k n\|_{L^2}\|\nabla^k v\|_{L^2},\cr
I_3^2 &\ls \|\nabla v\|_{L^\infty}\|\nabla^k n\|_{L^2}\|\nabla^k v\|_{L^2} \leq C\epsilon_1\|\nabla^k n\|_{L^2}\|\nabla^{k} v\|_{L^2},\cr
I_3^3 &\ls \|\nabla^k n\|_{L^2}\lt\|\nabla^{k-1}\lt(\frac{e^h}{1+n}(u-v)\rt)\rt\|_{L^2} \cr
&\ls \|\nabla^k n\|_{L^2}\lt\| \nabla^{k-1}\lt( \frac{e^h}{1+n}\rt)\rt\|_{L^2}\|u - v\|_{L^\infty} + \|\nabla^k n\|_{L^2}\lt\| \nabla^{k-1}\lt( \frac{e^h}{1+n}\rt)\rt\|_{L^\infty}\|u - v\|_{L^2}\cr
&\ls \|\nabla^k n\|_{L^2}\|\nabla^{k-1}(h,n)\|_{L^2}\|(u ,v)\|_{L^\infty} + \|\nabla^k n\|_{L^2}\|\nabla^{k-1}(u,v)\|_{L^2}\cr
&\leq C\epsilon_1\|\nabla^k n\|_{L^2}^2 + C\epsilon_1\|\nabla^{k-1}(h,n)\|_{L^2}^2 + \delta_2\|\nabla^k n\|_{L^2}^2 + C_{\delta_2}\|\nabla^{k-1}(u,v)\|_{L^2}^2,
\end{aligned}
\end{align}
where $\delta_2 > 0$ will be chosen later. For the estimate of $I_3^4$, we obtain
\begin{align*}
\begin{aligned}
I_3^4 &= - \frac{\gamma}{2\mu + \lambda}\int_{\om} (1+n)^2\nabla^k n \cdot\nabla^{k-1}\lt( (1+n)^{\gamma-2}\nabla n \rt)dx\cr
&=-\frac{\gamma(1 - \delta_{k,1})}{2\mu + \lambda}\sum_{1\leq l \leq k-1}\binom{k-1}{l}\int_{\om}(1+n)^2 \nabla^k n \cdot \lt(\nabla^l\lt( (1+n)^{\gamma-2}\rt)\nabla^{k-l}n\rt)dx\cr
&\quad - \frac{\gamma}{2\mu + \lambda}\int_{\om}(1+n)^\gamma |\nabla^k n|^2 dx\cr
&=: I_3^{4,1} + I_3^{4,2}.
\end{aligned}
\end{align*}
We estimate $I_3^{4,1}$ as
\begin{align*}
\begin{aligned}
I_3^{4,1} &\ls (1 - \delta_{k,1})\sum_{1\leq l \leq k-1}\|\nabla^k n\|_{L^2}\|\nabla^l (1+n)^{\gamma-2}\|_{L^4}\|\nabla^{k-l}n\|_{L^4}\cr
&\ls \|\nabla^k n\|_{L^2}\|\nabla (1+n)^{\gamma-2}\|_{H^{k-1}}\|\nabla n\|_{H^{k-1}}\cr
&\leq C\epsilon_1\|\nabla^k n\|_{L^2}\|\nabla n\|_{H^{k-1}}\cr
&\leq C\epsilon_1\|\nabla n\|_{H^{k-1}}^2,
\end{aligned}
\end{align*}
where we used the Sobolev inequality in Lemma \ref{lem:bern} to get $\|\nabla (1+n)^{\gamma-2}\|_{H^{k-1}} \leq C\epsilon_1$.
For the $I_3^{4,2}$, we use $\|n\|_{L^\infty }\leq \epsilon_1 \ll 1$ to get
\[
I_3^{4,2} \leq - \frac{C\gamma}{2\mu + \lambda}\|\nabla^k n\|_{L^2}^2,
\]
This yields
\bq\label{est_i2}
I_3^4 \leq C\epsilon_1\|\nabla n\|_{H^{k-1}}^2 - \frac{C\gamma}{2\mu + \lambda}\|\nabla^k n\|_{L^2}^2.
\eq
We divide $I_3^5$ into three terms by using the Leibniz rule:
\begin{align*}
\begin{aligned}
I_3^5 &= \frac{1}{2\mu + \lambda}\int_{\om} (1+n)^2\nabla^k n \cdot\lt( \frac{\mu\nabla^{k-1}(\nabla \cdot \nabla v)}{n+1} + \frac{\mu + \lambda}{n+1}\nabla^{k-1}\nabla \nabla \cdot v\rt) dx\cr
& \quad + \frac{1 - \delta_{k,1}}{2\mu+\lambda}\sum_{1\leq l \leq k-1}\binom{k-1}{l}\int_{\om} (1+n)^2 \nabla^k n \nabla^l\lt( \frac{\mu}{n+1}\rt) \nabla^{k-1-l}(\nabla \cdot \nabla v)\,dx\cr
&\quad + \frac{1 - \delta_{k,1}}{2\mu + \lambda}\sum_{1\leq l \leq k-1}\binom{k-1}{l}\int_{\om} (1+n)^2 \nabla^k n \nabla^l\lt( \frac{\mu + \lambda}{n+1}\rt) \nabla^{k-1-l}(\nabla \nabla \cdot v)\,dx\cr
&=:\sum_{i=1}^3 I_3^{5,i},
\end{aligned}
\end{align*}
where $I_3^{5,1}$ is estimated as follows.
\begin{align*}
\begin{aligned}
I_3^{5,1} &= \frac{\mu}{2\mu + \lambda}\int_{\om}(1+n)\nabla^k n \cdot \nabla^{k-1}(\nabla\cdot \nabla v)dx+ \frac{\mu+\lambda}{2\mu + \lambda}\int_{\om} (1+n)\nabla^k n \cdot \nabla^{k-1}(\nabla \nabla \cdot v) dx\cr
&=-\frac{\mu}{2\mu + \lambda}\int_{\om} \lt( \pa_j n \nabla^{k-1} \pa_i n + (1+n)\nabla^{k-1}\pa_{ij}n \rt) \nabla^{k-1}\pa_j v_i\,dx\cr
&\quad - \frac{\mu + \lambda}{2\mu + \lambda}\int_{\om} \lt( \pa_j n \nabla^{k-1}\pa_i n + (1+n)\nabla^{k-1}\pa_{ij}n \rt)\nabla^{k-1}\pa_i v_j\,dx\cr
&= -\frac{1}{2\mu + \lambda}\int_{\om} \lt( \mu \pa_j n \nabla^{k-1}\pa_i n + (\mu + \lambda)\pa_i n \nabla^{k-1}\pa_j n\rt) \nabla^{k-1}\pa_j v_i\, dx\cr
&\quad - \int_{\om} (1+n)\nabla^{k-1}\pa_{ij} n \cdot \nabla^{k-1}\pa_j v_i\, dx\cr
&\leq C\|\nabla^k n\|_{L^2}\|\nabla^k v\|_{L^2}\|\nabla n\|_{L^\infty} - \int_{\om} (1+n)\nabla^{k-1}\pa_{ij} n \cdot \nabla^{k-1}\pa_j v_i\, dx\cr
&\leq C\epsilon_1\|\nabla^k (n,v)\|_{L^2}^2 - \int_{\om} (1+n)\nabla^{k-1}\pa_{ij} n \cdot \nabla^{k-1}\pa_j v_i\, dx.
\end{aligned}
\end{align*}
We remind the reader that the summation notation is omitted. We also estimate $I_3^{5,i},i=2,3$ as 
\begin{align*}
\begin{aligned}
I_3^{5,2} + I_3^{5,3} &\ls (1 - \delta_{k,1})\sum_{1\leq l \leq k-1}\|\nabla^k n\|_{L^2}\lt\| \nabla^l\lt( \frac{1}{n+1}\rt)\rt\|_{L^4}\|\nabla^{k+1-l} v\|_{L^4}\cr
&\ls \|\nabla^k n\|_{L^2}\|\nabla n\|_{H^{k-1}}\|\nabla^2 v\|_{H^{k-1}}\cr
&\leq C\epsilon_1\|\nabla^k n\|_{L^2}\|\nabla^2 v\|_{H^{k-1}}\cr
&\leq C\epsilon_1\|\nabla^k n\|_{L^2}^2 + C\epsilon_1\|\nabla^2 v\|_{H^{k-1}}^2.
\end{aligned}
\end{align*}
This implies
\bq\label{est_i3}
I_3^5 \leq C\epsilon_1\|\nabla^k n\|_{L^2}^2 + C\epsilon_1\|\nabla v\|_{H^k}^2- \int_{\om} (1+n)\nabla^{k-1}\pa_{ij} n \cdot \nabla^{k-1}\pa_j v_i\, dx.
\eq
We now collect the estimates \eqref{est_i1}-\eqref{est_i3} to have
$$\begin{aligned}
I_3 &\leq C\epsilon_1\|\nabla n\|_{H^{k-1}}^2 + C\epsilon_1\|\nabla v\|_{H^k}^2 + C\epsilon_1\|\nabla^{k-1}(h,n)\|_{L^2}^2 + C\delta_2\|\nabla^{k-1}(u,v)\|_{L^2}^2\cr
& - \lt(\frac{C\gamma}{2\mu+\lambda} - \delta_2\rt)\|\nabla^k n\|_{L^2}^2- \int_{\om} (1+n)\nabla^{k-1}\pa_{ij} n \cdot \nabla^{k-1}\pa_j v_i\, dx.
\end{aligned}$$
$\diamond$ Estimate of $I_4$: Since
\[
I_4 = -\frac{2}{2\mu + \lambda}\int_{\om} (1+n)\nabla^{k-1} n \cdot \nabla^{k-1}v \lt( \nabla n \cdot v + (1+n)\nabla \cdot v\rt)dx,
\]
we easily obtain
\begin{align*}
\begin{aligned}
I_4 &\ls \|\nabla^{k-1}n\|_{L^2}\|\nabla^{k-1}v\|_{L^2}\lt( \|\nabla n \cdot v\|_{L^\infty} + \|\nabla v\|_{L^\infty}\rt) \leq C\epsilon_1\|\nabla^{k-1}(n,v)\|_{L^2}^2.
\end{aligned}
\end{align*}
We finally combine all of the above estimates to find
\begin{align*}
\begin{aligned}
&\frac{d}{dt}\int_{\om} \nabla^k n \cdot \lt( \frac{\nabla^k n}{2} + \frac{(n+1)^2}{2\mu + \lambda} \nabla^{k-1} v\rt) dx + \lt(\frac{C\gamma}{2\mu + \lambda} - \delta_2 - C\epsilon_1\rt)\|\nabla^k n\|_{L^2}^2\cr
&\qquad \leq C\epsilon_1\|n\|_{H^{k-1}}^2 + C\epsilon_1\|v\|_{H^{k+1}}^2 + C\epsilon_1\|\nabla^{k-1}h\|_{L^2}^2 + C\|\nabla^k v\|_{L^2}^2 + C_{\delta_2}\|\nabla^{k-1}(u,v)\|_{L^2}^2,
\end{aligned}
\end{align*}
and by choosing $\delta_2 > 0$ small enough such that $\frac{C\gamma}{2\mu + \lambda} - \delta_2 - C\epsilon_1 > 0$, we have that for $1 \leq k \leq s$
\begin{align*}
\begin{aligned}
&\frac{d}{dt}\int_{\om} \nabla^k n \cdot \lt( \frac{\nabla^k n}{2} + \frac{(n+1)^2}{2\mu + \lambda} \nabla^{k-1} v\rt) dx + C_1\|\nabla^k n\|_{L^2}^2\cr
&\quad \leq C\epsilon_1\|n\|_{H^{k-1}}^2 + C\epsilon_1\|v\|_{H^{k+1}}^2 + C\epsilon_1\|\nabla^{k-1}h\|_{L^2}^2 + C\|\nabla^k v\|_{L^2}^2 + C\|\nabla^{k-1}(u,v)\|_{L^2}^2
\end{aligned}
\end{align*}
for some positive constant $C_1 > 0$. Here $C$ is a positive constant independent of $t$.

%
%
%
%

\end{document}